\numberwithin{equation}{section}
\newtheorem{thm}{Theorem}[section]
\newtheorem{lem}[thm]{Lemma}
\newtheorem{prop}[thm]{Proposition}
\newtheorem*{Thm}{Main Theorem}
\theoremstyle{definition}
\newtheorem{ex}[thm]{Example}
\newtheorem{defn}[thm]{Definition}
\newtheorem{constr}[thm]{Construction}
\newtheorem{cl}[thm]{Claim}
\newcommand{\sing}{\mathrm{sing}}
\newcommand{\irr}{\mathrm{irr}}
\newcommand{\trop}{\mathrm{trop}}
\newcommand{\rank}{{\rm rank}}
\newcommand{\val}{{\rm val}}
\newcommand{\Spec}{{\rm Spec}}
\newcommand{\RR}{{\mathbb R}}
\newcommand{\ZZ}{{\mathbb Z}}
\newcommand{\PP}{{\mathbb P}}
\newcommand{\NN}{{\mathbb N}}
\newcommand{\GG}{{\mathbb G}}
\newcommand{\CL}{{\mathcal L}}
\newcommand{\CO}{{\mathcal O}}
\newcommand{\CX}{{\mathcal X}}
\newcommand{\Star}{{\rm Star}}
\def\:{\colon}
\def\val{\nu}
\newcommand{\cX}{{\mathcal{X}}}
\newcommand{\pg}{\mathrm{p_g}}
\newcommand{\ev}{\mathrm{ev}}
\newcommand{\m}{\mathfrak w}
\theoremstyle{remark}
\newtheorem{rem}[thm]{Remark}
\DeclareRobustCommand{\cev}[1]{%
  {\mathpalette\do@cev{#1}}%
}
\newcommand{\do@cev}[2]{%
  \vbox{\offinterlineskip
    \sbox\z@{$\m@th#1 x$}%
    \ialign{##\cr
      \hidewidth\reflectbox{$\m@th#1\vec{}\mkern4mu$}\hidewidth\cr
      \noalign{\kern-\ht\z@}
      $\m@th#1#2$\cr
    }%
  }%
}
\title{Degeneration of curves on some polarized toric surfaces}
\author{Karl Christ, Xiang He, and Ilya Tyomkin}
\thanks{IT is partially supported by the Israel Science Foundation (grant No. 821/16). KC was partially supported by the Israel Science Foundation (grant No. 821/16) and by the Center for Advanced Studies at BGU. XH is supported by the ERC Consolidator Grant 770922 - BirNonArchGeom.}
\address[Christ]{$~^1$Department of Mathematics\\
	Ben-Gurion University of the Negev\\P.O.Box 653 \\Be'er Sheva\\ 84105\\  Israel and $~^2$Institute of Algebraic Geometry\\
	Leibniz University Hannover\\Welfengarten 1 \\30167 Han\-no\-ver\\  Germany }\email{kchrist@math.uni-hannover.de}
\address[He]{$~^1$Einstein Institute of Mathematics\\
	The Hebrew University of Jerusalem\\ Giv'at Ram\\ Jerusalem\\ 91904\\ Israel and $~^2$Yau Mathematical Sciences Center, Ningzhai, Tsinghua University, Hai Dian District, Beijing, China, 100084
 }
	\email{xianghe@mail.tsinghua.edu.cn}
\address[Tyomkin]{Department of Mathematics\\
	Ben-Gurion University of the Negev\\P.O.Box 653 \\Be'er Sheva\\ 84105\\  Israel }\email{tyomkin@math.bgu.ac.il}
\begin{document}
	
\begin{abstract}
We address the following question: {\em Given a polarized toric surface $(S,\CL)$, and a general integral curve $C$ of geometric genus $g$ in the linear system $|\CL|$, do there exist degenerations of $C$ in $|\CL|$ to general integral curves of smaller geometric genera?} We give an affirmative answer to this question for surfaces associated to $h$-transverse polygons, provided that the characteristic of the ground field is large enough. We give examples of surfaces in small characteristic, for which the answer to the question is negative. In case the answer is affirmative, we deduce that a general curve $C$ as above is nodal. In characteristic $0$, we use the result to show the irreducibility of Severi varieties of a large class of polarized toric surfaces with $h$-transverse polygon.
\end{abstract}

\maketitle
	
\setcounter{tocdepth}{1}
\tableofcontents

\section{Introduction}
In algebraic geometry, degenerations are one of the main tools in the study of algebraic curves. They play a central role in enumerative questions \cite{caporaso1998counting}, Brill-Noether theory \cite{K76, GH80}, irreducibility problems \cite{Har86}, etc.
In the current paper, we investigate degenerations of curves on toric surfaces. Our main motivation comes from the Severi problem and Harris' approach to it, which we briefly recall. Roughly speaking, Severi varieties parametrize integral curves of a fixed geometric genus in a given linear system. The problem of the irreducibility of Severi varieties in the planar case, the classical Severi problem, had been a long-standing open problem until settled in characteristic $0$ by Harris in 1986 \cite{Har86}. In our preceding paper \cite{CHT20a}, we extended his result to arbitrary characteristic. The present paper is concerned with the analogous problem for toric surfaces.

Harris' approach goes back to the original ideas of Severi, and consists of two parts: first, one proves that the closure of any irreducible component of the Severi variety contains the Severi variety $V_{0,d}^\irr$ parametrizing integral rational curves; this amounts to a statement about the possible degenerations of curves in the given linear system. Second, one describes the branches of the original Severi variety along $V_{0,d}^\irr$, and uses monodromy arguments to show that all branches belong to the same irreducible component. A similar approach was used by the third author in \cite{Tyo07} to prove the irreducibility of Severi varieties on Hirzebruch surfaces in characteristic $0$.

In this paper, we prove the degeneration part of the argument. We restrict our attention to the case of polarized toric surfaces associated to so-called $h$-transverse polygons. This is a rich class of surfaces that includes the projective plane, Hirzebruch surfaces, and many other examples. In particular, this class includes examples of toric surfaces admitting reducible Severi varieties \cite{Tyo14, LT20}; see Example~\ref{ex:kites irreduciblity}. Our main result asserts that any irreducible component of a Severi variety on such a surface contains in its closure the Severi variety parametrizing integral rational curves, provided that the characteristic of the ground field is either $0$ or large enough. Moreover, any integral curve of geometric genus $g$ in the linear system associated to an $h$-transverse polygon can be degenerated to an integral curve of any genus $0\le g'\le g$.

In the planar case, the monodromy part of the argument is relatively easy, and the main difficulty lies in the degeneration step. We shall mention, however, that in general the monodromy argument is subtle, and there are examples of toric surfaces for which it fails. Moreover, some toric surfaces admit reducible Severi varieties, and the structure of the monodromy groups seems to be the main reason for their reducibility. An interested reader can find more about such examples in the recent paper of Lang and the third author \cite{LT20}, and about the computation of the monodromy groups in the case of toric surfaces in Lang's \cite{lang2020monodromy}.

\subsection{The main results}
We work over an algebraically closed field $K$. To formulate the main result, we need the notion of the width of an $h$-transverse polygon. Recall that an $h$-transverse polygon in $\RR^2$ is a convex lattice polygon $\Delta$, whose horizontal slices at integral heights $y=k\in\ZZ$ are either empty or intervals with integral boundary points. We define the {\em width} of an $h$-transverse polygon $\Delta$ to be the maximal length of its horizontal slices and denote it by  $\mathtt w(\Delta)$.

\begin{Thm}\label{Thm:main thm}
Let $\Delta\subset\RR^2$ be an $h$-transverse lattice polygon, and $(S_\Delta,\CL_\Delta)$ the associated polarized toric surface. If $\mathrm{char}(K)=0$ or $\mathrm{char}(K)>\mathtt w(\Delta)/2$, then any irreducible component of the Severi variety parametrizing integral curves of geometric genus $g\ge 0$ in the linear system $|\CL_\Delta|$ contains in its closure the Severi variety parametrizing integral rational curves in $|\CL_\Delta|$.
\end{Thm}

We refer the reader to Theorem~\ref{thm:main thm} for a stronger version of this statement, in which we allow for some fixed tangency conditions with the toric boundary of $S_\Delta$. The bound $\mathrm{char}(K)>\mathtt w(\Delta)/2$ is sharp for $\mathrm{char}(K) \geq 3$: In Proposition~\ref{prop:low characteristic counterexample} we provide for every prime $p$ an explicit example of a polarized toric surface with $h$-transverse polygon of width $\max\{6, 2p\}$ and a component of the corresponding Severi variety parametrizing genus $1$ curves, that violates the assertion of the theorem if $\mathrm{char}(K)=p$. 

We give two applications of the Main Theorem. First, we generalize Zariski's Theorem in Theorem~\ref{thm:Zariski}; that is, we show under the assumptions of the Main Theorem, except requiring $\mathrm{char}(K)>\mathtt w(\Delta)$ instead of $\mathrm{char}(K)>\mathtt w(\Delta)/2$, that a general element in any irreducible component of a Severi variety is a nodal curve. This was previously known only in characteristic $0$ \cite{KS13}, and there are examples of polarized toric surfaces, for which the assertion fails in positive characteristic \cite{Tyo13}. In fact, the bound $\mathrm{char}(K)>\mathtt w(\Delta)$ in Zariski's Theorem is sharp as Example~\ref{ex:severi with tangency} shows. Our argument goes as follows: we prove by explicit calculations that a general integral rational curve is nodal. By the Main Theorem, any irreducible component of a Severi variety contains in its closure the general integral rational curves, and the claim follows since being nodal is an open condition.

The second application returns to our original motivation, and we deduce irreducibility of Severi varieties whenever the assumptions of the Main Theorem are satisfied and the results of Lang \cite{lang2020monodromy} ensure that the monodromy acts as a full symmetric group on the set of nodes of a general integral rational curve; that is when also the monodromy part in Harris' approach is known. This is the case when $\mathrm{char}(K) = 0$, the primitive outer normals to the sides of an $h$-transverse polygon $\Delta$ generate the whole lattice $\ZZ^2$, and the polarization is ample enough; see Theorem~\ref{thm:monodromy} for a precise statement. In \cite[Corollary 1.11]{B16}, Bourqui obtains irreducibility for some smooth polarized toric surfaces. Note however the crucial difference that, unlike in {\it loc. cit.}, the ampleness condition in our Theorem~\ref{thm:monodromy} is independent of the fixed geometric genus $g$. 

The tangency conditions we allow for in Theorem~\ref{thm:main thm} recover for $S_\Delta = \PP^2$ some of the generalized Severi varieties of Caporaso and Harris \cite{caporaso1998counting}; namely, the ones where the position of the point with tangency to the line is not fixed; see Remark~\ref{rem:generalized severi} for details. Recently, the irreducibility of generalized Severi varieties of irreducible curves has been proven in characteristic $0$ by Zahariuc \cite{Z19}. We expect that our methods can be used to obtain a more detailed understanding of degenerations of curves with fixed tangency profiles on toric surfaces; this however was not the focus of the current paper and the tangencies we allow for in Theorem~\ref{thm:main thm} are the ones we get for free in an argument designed for curves that intersect the toric boundary transversally.
Note also, that we ensure in the current paper that the curves of smaller geometric genus constructed in each degeneration are irreducible, which we did not require in \cite{CHT20a}; this is a second aspect, in which we get a more detailed picture even in the planar case.

The degeneration argument of \cite{CHT20a} works for a larger class of toric surfaces, and not just $\PP^2$. While this class does not contain all surfaces with $h$-transverse polygons considered in the present paper, it does contain for example Hirzebruch surfaces and toric del Pezzo surfaces. In an upcoming third part \cite{CHT20c} of the series, we give a characteristic-free tropical proof that involves no monodromy arguments in these cases, thus establishing the irreducibility of Severi varieties for such surfaces in any characteristic. As a consequence, we obtain the irreducibility of Hurwitz schemes in any characteristic.

\subsection{The approach}
Our approach is based on tropical geometry and develops further the techniques introduced in \cite{CHT20a}. The main idea is to control the degenerations by studying the tropicalizations of one-parameter families of algebraic curves and the induced maps to the moduli spaces of tropical curves. The polygon $\Delta$ being $h$-transverse allows us to work with floor decomposed tropical curves -- a particularly convenient class of tropical curves introduced by Brugall\'e and Mikhalkin \cite{BM08}. Let us explain our approach in some detail.

For the closure $V$ of a given irreducible component of a Severi variety, we consider the locus $V^{FD}$ of algebraic curves, whose tropicalizations are floor decomposed, weightless, 3-valent, and immersed except at contracted legs; see \S~\ref{sec:nottropcur} for the definition of weight function and weightless tropical curves. We analyze the tropicalizations of one-parameter families of curves in $V$ and show that in the closure of $V^{FD}$ there are curves, whose tropicalization contains either a contracted loop or a contracted edge that belongs to a cycle of the tropicalization. Proceeding similarly to \cite{CHT20a}, we show that the length of the contracted edge/loop can be increased indefinitely, and conclude that the closure of $V^{FD}$ contains general irreducible curves of smaller genera. From this, the Main Theorem follows by induction and a dimension count.

Although the strategy of the proof is similar to the proof in \cite{CHT20a}, it is more complicated. In the planar case discussed in \emph{loc. cit.}, the one-parameter family connecting a general tropical curve with one that contains a contracted edge can be constructed such that all its fibers are weightless, $3$-valent tropical curves, except for possibly one $4$-valent vertex. In the more general setting of the current paper, this is no longer the case, and we have to allow for additional types of tropical curves. For each of them, we have to prove a local liftability result to ensure that the curves we construct in the tropical family are tropicalizations of curves in the closure of $V^{FD}$. Among the new types considered in this paper is the case of tropical curves, which are weightless and $3$-valent except for a unique $2$-valent vertex of weight $1$. To show local liftability in this case, we introduce a local version of Speyer's well-spacedness condition \cite{S14} in Proposition~\ref{prop:weight one midpoint}. Then we use an argument that uses properties of modular curves parametrizing elliptic curves together with a torsion point of a fixed order, that depends on the characteristic; see Lemma~\ref{lem:alpha at weight one vertex} for details. This is the reason why unlike the result in \cite{CHT20a}, the Main Theorem is not stated for arbitrary characteristic. As mentioned above, the theorem may fail in low characteristic: a related construction (also involving modular curves) gives explicit examples when this happens; see Proposition~\ref{prop:low characteristic counterexample} for details.

\subsection{The structure of the paper}

In \S \ref{sec:notterm} we recall basic definitions and fix some terminology.  In \S \ref{sec:tropical curves} we discuss $h$-transverse polygons, floor decomposed curves, and the tropicalization of parametrized curves. We establish in Proposition~\ref{prop:weight one midpoint} a local version of Speyer's well-spacedness condition for realizable tropical curves. In \S \ref{sec:liftability}
we recall the local liftability results established in \cite{CHT20a} and prove in Lemmas~\ref{lem:alpha at flattened cycle} and \ref{lem:alpha at weight one vertex} the additional ones needed in the degeneration argument for $h$-transverse $\Delta$. In \S \ref{sec:degeneration} we prove Theorem~\ref{thm:main thm}, which immediately implies the Main Theorem. In Proposition~\ref{prop:low characteristic counterexample} we show that in low characteristic the claim of the theorem may fail. In \S \ref{sec:zar} we prove Zariski's Theorem, and in \S \ref{sec:irr} we combine the results with those of \cite{lang2020monodromy} to establish irreducibility of Severi varieties in some cases.

\medskip
	
\noindent {\bf Acknowledgements.} We are very grateful to an anonymous referee, whose comments helped us to improve the presentation and to fix some inaccuracies in an earlier version of the paper.

\section{Notation and terminology}\label{sec:notterm}

We follow the conventions of \cite{CHT20a}. For the convenience of the reader, we recall them here and refer to \textit{loc. cit.} for further details.

\subsection{Families of curves}

\label{subsec: Families of curves}

By a \emph{family of curves} we mean a flat, projective morphism of finite presentation and relative
dimension one. By a collection of \emph{marked points} on a family of curves, we mean a collection of
disjoint sections contained in the smooth locus of the family. A family of curves with marked
points is \emph{prestable} if its fibers have at-worst-nodal singularities. It is called
\emph{(semi-)stable} if so are its geometric fibers. A prestable curve with marked points defined over a field is
called \emph{split} if the irreducible components of its normalization are geometrically irreducible and
smooth and the preimages of the nodes in the normalization are defined over the ground field.
A family of prestable curves with marked points is called split if all of its fibers are so.  If $U \subset Z$ is open, and $(\cX , \sigma_\bullet)$ is a family of curves with marked points over $U$, then by a
\emph{model} of $(\cX , \sigma_\bullet)$ over $Z$ we mean a family of curves with marked points over $Z$, whose restriction
to $U$ is $(\cX , \sigma_\bullet)$.

\subsection{Toric varieties and parametrized curves}
For a toric variety $S$, we denote the lattice of characters by $M$, of cocharacters by $N$, and the monomial functions by $x^m$ for $m\in M$. The polarized toric variety associated to a lattice polytope $\Delta\subset M_\RR \coloneqq M \otimes_{\ZZ} \RR$ is denoted by $(S_\Delta, \mathscr L_\Delta)$, and the facets of $\Delta$ are denoted by $\partial \Delta_i$. In the current paper, we mostly work with toric surfaces. For simplicity, we will always fix an identification $M=N=\ZZ^2$ such that the duality between the lattices is given by the usual dot product pairing.

A \emph{parametrized curve} in a toric variety $S$ is a smooth projective curve with marked points $(X,\sigma_\bullet)$ together with a map $f\: X \to S$ such that $f(X)$ does not intersect orbits of codimension greater than one, and the image of $X \setminus \left( \bigcup_i \sigma_i \right)$ under $f$ is contained in the dense torus $T \subset S$.
A \emph{family of parametrized curves} $f\colon \cX \dashrightarrow S_{\Delta}$ over $K$ consists of the following data: 
	\begin{enumerate}
	    \item a smooth, projective base curve $(B, \tau_{\bullet})$ with marked points,
	    \item a family of stable marked curves $(\cX \to B, \sigma_{\bullet})$, smooth over $B' \coloneqq B \setminus \left( \bigcup_i \tau_i \right)$, and
	    \item a rational map $f\colon \cX \dashrightarrow S_{\Delta}$, defined over $B'$, such that for any closed point $b \in B'$ the restriction $\cX_b \to S_{\Delta}$ is a parametrized curve. 
	\end{enumerate}

\subsection{Tropical curves}\label{sec:nottropcur}
The graphs we consider have vertices, edges, and half-edges, called {\em legs}. We denote the set of vertices of a graph $\mathbb G$ by $V(\GG)$, of edges by $E(\GG)$, and of legs by $L(\GG)$. We set $\overline E(\mathbb G):=E(\GG)\cup L(\GG)$. By a {\em tropical curve} $\Gamma$ we mean a finite graph $\mathbb G$ with ordered legs equipped with a {\em length function} $\ell\colon E(\mathbb G)\rightarrow \mathbb R_{>0}$ and a {\em weight (or genus) function} $g\colon V(\mathbb G)\rightarrow \mathbb Z_{\geq 0}$. We say that a curve is {\em weightless} if the weight function is identically zero. For any leg $l\in L(\GG)$ we set $\ell(l):=\infty$. We usually view tropical curves as polyhedral complexes by identifying the edges of $\GG$ with bounded closed intervals of corresponding lengths in $\RR$ and identifying the legs of $\GG$ with semi-bounded closed intervals in $\RR$.
	
For $e\in \overline E(\mathbb G)$ we denote by $e^\circ$ the interior of $e$, and use $\vec e$ to indicate a choice of orientation on $e$. The legs are always oriented away from the adjacent vertex. For $v\in V(\mathbb G)$, we denote by $\Star(v)$ the {\em star} of $v$, i.e., the collection of oriented edges and legs having $v$ as their tail. In particular, $\Star(v)$ contains two oriented edges for every loop based at $v$. The number of edges and legs in $\Star(v)$ is called the {\em valence} of $v$. The {\em genus} of $\Gamma$ is defined to be $g(\Gamma) = g(\GG) :=1-\chi(\mathbb G)+\sum_{v\in V(\mathbb G)}g(v)$, where $\chi(\GG):=b_0(\GG)-b_1(\GG)$ denotes the Euler characteristic of $\GG$. Finally, a tropical curve $\Gamma$ is called {\em stable}  if every vertex of weight zero has valence at least three, and every vertex of weight one has valence at least one.

A {\em parametrized tropical curve} is a balanced piecewise integral affine map $h\colon \Gamma\rightarrow N_\mathbb R$ from a tropical curve $\Gamma$ to $N_\mathbb R$, i.e., a continuous map, whose restrictions to the edges and legs are integral affine, and for any vertex $v\in V(\Gamma)$ the following balancing condition holds: $\sum_{\vec e\in\Star(v)}\frac{\partial h}{\partial \vec e}=0.$ Note that the slope $\frac{\partial h}{\partial \vec e}\in N$ is not necessarily primitive, and its integral length is the stretching factor of the affine map $h|_e$. We call it the {\it multiplicity} of $h$ along $\vec e$, and denote it by $\m(h,e)$, or $\m(e)$ if the parametrization $h$ is clear from the context. In the literature, the multiplicity $\m(e)$ is often called the weight of $e$, but we shall not use this terminology in the current paper.

A parametrized tropical curve $(\Gamma, h)$ is called {\em stable} if so is $\Gamma$. Its {\em combinatorial type} $\Theta$ is defined to be the weighted underlying graph $\mathbb G$ with ordered legs equipped with the collection of slopes $\frac{\partial h}{\partial \vec e}$ for $e \in \overline E(\GG)$. We denote the group of automorphisms of a combinatorial type $\Theta$ by $\mathrm{Aut}(\Theta)$, and the isomorphism class of $\Theta$ by $[\Theta]$.

The {\em degree} $\nabla$ of a parametrized tropical curve is the sequence $\left(\frac{\partial h}{\partial \vec l_i}\right)$, where $l_i$'s are the legs not contracted by $h$. If $\rank(N)=2$, then we say that a degree $\nabla$ is \emph{dual to} a polygon $\Delta$, if each slope $\frac{\partial h}{\partial \vec l_j}$ is a multiple of an outer normal of $\Delta$ and the multiplicities of slopes that correspond to a given side $\partial \Delta_i$ in this way sum to the integral length of $\partial \Delta_i$.

\subsection{Moduli and families of parametrized tropical curves.}
\label{subsec: moduli and families of parametrized tropical curves}

We denote by $M_{g,n, \nabla}^{\trop}$ the moduli space of parametrized tropical curves of genus $g$, degree $\nabla$, and with $n$ contracted legs. We always assume that the first $n$ legs $l_1,\dotsc, l_n$ are the contracted ones.
The space $M_{g,n, \nabla}^{\trop}$ is a generalized polyhedral complex. It is stratified by subsets $M_{[\Theta]}$, that are indexed by combinatorial types $\Theta$ with the fixed invariants $g, \nabla,$ and $n$. If $\GG$ is the underlying graph of a combinatorial type $\Theta$, then $M_{[\Theta]} = M_{\Theta}/\mathrm{Aut}(\Theta)$, where $M_{\Theta}$ is a polyhedron in $\RR^{|E(\GG)|} \times N_\RR^{|V(\GG)|}$ parametrizing tropical curves $h\: \Gamma \to N_{\RR}$ of type $\Theta$. The $e$-coordinate for an edge $e \in E(\GG)$ is the length $\ell(e)$ of $e$; the $v$-coordinates for a vertex $v \in V(\GG)$ are the coordinates of $h(v) \in N_\RR$.

Let $\Lambda$ be a tropical curve without loops. By a \emph{family} of parametrized tropical curves over $\Lambda$ we mean a continuous family of curves $h_q \colon \Gamma_q \to N_\RR$ for $q\in\Lambda$ such that the following holds:
    \begin{enumerate}
        \item The degree and the number of contracted legs are the same for all fibers. 
        \item Along the interior of an edge/leg $e$ of $\Lambda$, the combinatorial type of the fibers is constant.
        \item If a vertex $v \in V(\Lambda)$ is adjacent to an edge/leg $e$ of $\Lambda$, there is a fixed contraction of weighted graphs with marked legs $\varphi_{\vec e}\colon \GG_e \to \GG_v$ of the underlying graphs of the fibers. 
        \item The length of an edge $\gamma \in E(\GG_e)$ in $\Gamma_q$ along an edge/leg $e$ of $\Lambda$ is given by an integral affine function. Here we identify edges over an adjacent vertex of $e$ with those over $e^\circ$ via $\varphi_{\vec e}$, setting their length to zero in case they get contracted.
        \item Similarly, the coordinates of the images $h(u)$ for a vertex $u$ of $\GG_e$ in $h(\Gamma_q)$ along an edge/leg $e$ of $\Lambda$ is given by an integral affine function, where we again identify vertices over the interior of $e$ with those over vertices adjacent to $e$ via $\varphi_{\vec e}$.
    \end{enumerate}
    Any family of parametrized tropical curves $h\:\Gamma_\Lambda \to N_\RR$ induces a piecewise integral affine map $\alpha\: \Lambda \to M_{g, n, \nabla}^{\trop}$ by sending $q \in \Lambda$ to the point parametrizing the isomorphism class of the fiber $h_q\: \Gamma_q \to N_\RR$. Furthermore, $\alpha$ lifts to an integral affine map from the interior of each edge/leg $e$ of $\Lambda$ to the corresponding stratum $M_\Theta$. We refer the reader to \cite[\S 3.1.3, 3.1.4]{CHT20a} for formal definitions.

\subsection{Valued fields}
For a valued field $K$, we denote by $K^0$ the ring of integers, by $K^{00}\subset K^0$ the maximal ideal, and by $\widetilde K$ the residue field. As a general convention, we will similarly indicate when working over $K^0$ and $\widetilde K$ by adding a superscript $0$, respectively an overlining tilde; for example, $X^0$ for a model over $K^0$ of a curve $X \to \Spec(K)$ and $\widetilde X$ for its reduction.

\section{Realizable tropical curves and \texorpdfstring{$h$}{h}-transverse polygons}\label{sec:tropical curves}

In this section and the next, we will work over the algebraic closure $K$ of a complete, discretely valued field, with valuation $\val\:K\to \RR\cup\{\infty\}$.

\subsection{$h$-transverse polygons}
	
	The toric surfaces $S_\Delta$ that we consider in this paper are the ones with $h$-transverse polygon $\Delta$. 
	We restrict ourselves to $h$-transverse polygons since the theory of floor decomposed tropical curves has been developed only for this case in \cite{BM08}. 

	\begin{defn}\label{defn:htransverse}
	A polygon $\Delta \subset M_{\RR} = \RR^2$ is called $h$-{\em transverse}, if its normal vectors all have integral or infinite slopes with respect to the $x$-coordinate.
	\end{defn}
	Clearly, whether $\Delta$ is $h$-transverse depends only on its normal fan; thus the notion encodes a property of the surface $S_\Delta$ and does not depend on the polarization $\mathscr L_\Delta$. On the other hand, it does depend on a choice of coordinates, i.e., on the identification $M\cong\ZZ^2$. For the results we will discuss, it, of course, suffices that there is one choice of coordinates, for which $\Delta$ is $h$-transverse.
	
	More explicitly, $h$-transversality of a polygon $\Delta$ is characterized by the following. Denote by $y_0 := \min_{q\in\Delta}y_q$ the minimal $y$-coordinate of lattice points in $\Delta$. Set $y_i := y_0 + i$ with $i \in \NN$ up to $y_k := \max_{q\in\Delta} y_q$, and denote by $Y_i$ the horizontal line given by $y = y_i$. Then $Y_0$ and $Y_k$ intersect $\Delta$ in a bounded interval, possibly of length $0$. Now, $\Delta$ is $h$-transverse if and only if the lines $Y_i$ for $0 \leq i \leq k$ intersect the boundary of $\Delta$ at two lattice points; or in other words, the intervals $Y_i \cap \Delta$ are lattice intervals for all $i$. See Figure~\ref{fig:htransverse1} for an illustration. We denote by $\mathtt w(\Delta)$ the {\it width} of $\Delta$, that is, the maximum length of the intersections $Y_i \cap \Delta$. If they are not of length $0$, we call the lattice intervals $Y_0 \cap \Delta$ and $Y_k \cap \Delta$ the \emph{horizontal sides} of $\Delta$.
	
	\begin{ex}\label{ex:h-transverse polygon}
	Many smooth toric surfaces are induced by $h$-transverse polygons, for example, $\PP^2$, Hirzebruch surfaces, toric del Pezzo surfaces, and similarly blow-ups of Hirzebruch surfaces at any subset of the $4$ torus invariant points. An interesting class of examples of singular polarized toric surfaces induced by $h$-transverse polygons are the surfaces associated to the so-called {\em kites} introduced in \cite{lang2020monodromy}; see also \cite[\S 1.1]{LT20}. These are lattice polygons with vertices $(0,0), (k, \pm 1)$ and $(k + k', 0)$ for non-negative integers $k, k'$ with $k' \geq k$ and $k' > 0$ (the choice of coordinates differs from {\em loc. cit.}), see Figure~\ref{fig:htransverse1}. Some of these surfaces admit reducible Severi varieties, and we will discuss them in more detail in Proposition~\ref{prop:low characteristic counterexample} and Example~\ref{ex:kites irreduciblity} below.
	\end{ex}
	
    \begin{ex}\label{ex:(non)htr}
    Let $S_\Delta$ be the weighted projective plane with $\Delta$ given by vertices $(0,1), (1,-1)$ and $(6,-1)$; see  Figure~\ref{fig:htransverse1}. Then $\Delta$ is not $h$-transverse for any choice of coordinates. Indeed, if there was such a choice of coordinates, then the $x$-coordinate would give a non-zero linear function on $N$, that would assign each of the $3$ primitive vectors along the rays of the dual fan of $\Delta$ values $\pm 1$ or $0$. The above conditions on linear functions on $N$ correspond to $3$ triples of parallel lines in $M$, and one checks that for $\Delta$ as above no $3$ have a common point of intersection other than the origin; hence $\Delta$ is not $h$-transverse for any choice of coordinates. Note also that the same holds for any $\Delta'$ whose dual fan is a refinement of the dual fan of $\Delta$. Thus, an appropriate toric blow up $S_{\Delta'}$ of $S_{\Delta}$ gives a smooth toric surface that is not $h$-transverse for any choice of coordinates.
    \end{ex}

	\tikzset{every picture/.style={line width=0.75pt}}
    \begin{figure}[ht]	
    \begin{tikzpicture}[x=0.8pt,y=0.8pt,yscale=-0.8,xscale=0.8]
    \import{./}{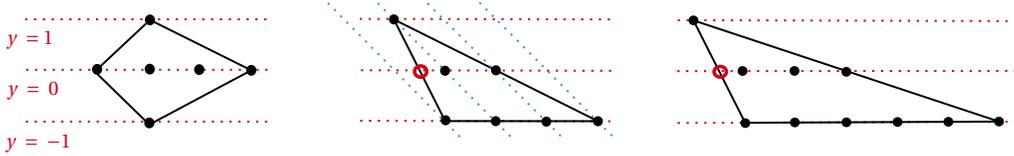}
    \end{tikzpicture}		
    \caption{The kite $(k,k')=(1,2)$ and two triangles. The right triangle is not $h$-transverse in any coordinate system, cf. Example~\ref{ex:(non)htr}, while the middle triangle is $h$-transverse with respect to the $x$-axis parallel to the blue lines, but not with respect to the usual $x$-axis.}
    \label{fig:htransverse1}
    \end{figure}
    	
    \subsection{Floor decomposed curves}\label{subsubsec:bound multiplicities} 
    Recall that a \emph{floor decomposed tropical curve} is defined to be a pa\-ra\-me\-tri\-zed tropical curve $h \colon \Gamma \to N_{\RR}=\RR^2$, in which the $x$-coordinate of the slope of each edge/leg is either $0$ or $\pm 1$. In the former case, a non-contracted edge/leg is called an {\it elevator}. If a connected component of the graph obtained from $\Gamma$ by removing the interiors of all elevators contains a non-contracted edge/leg, it is called a \emph{floor}.

    \begin{lem} \label{lem:max stretching factor}
    Let $h\: \Gamma \to N_{\RR}$ be a floor decomposed tropical curve of degree $\nabla$ dual to an $h$-transverse polygon $\Delta$. Then the multiplicity of any elevator $e\in\overline E(\Gamma)$ is bounded above by the width $\mathtt w(\Delta)$.
    \end{lem}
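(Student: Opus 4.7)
The plan is to invoke the standard duality between planar tropical curves and Newton subdivisions of their Newton polygon. I would first form the push-forward tropical curve $C \coloneqq h_*\Gamma$ in $\RR^2$, in which each image edge $f$ carries the multiplicity $\sum_{e' \,:\, h(e') \supseteq f} \m(e')$. Since $\nabla$ is dual to $\Delta$, the unbounded rays of $C$ are the images of the legs of $\Gamma$, and their total multiplicities along each primitive outer normal of $\Delta$ match the lattice length of the corresponding side of $\Delta$. The Newton polygon of $C$ is therefore $\Delta$, and by the tropical/Newton duality $C$ is dual to a Newton subdivision $\Delta^\vee$ of $\Delta$: each edge of $C$ with primitive direction $v$ and multiplicity $m$ corresponds to a lattice segment of $\Delta^\vee$ of direction $v^\perp$ and lattice length $m$.

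Applied to the elevator $e$, this gives the bound as follows. The image $h(e)$ is a vertical segment of positive length, which decomposes into one or more vertical edges of $C$. For any such vertical edge $f \subseteq h(e)$, the image multiplicity satisfies $\mathrm{mult}_C(f) \ge \m(e)$, because $e$ is among the $\Gamma$-edges covering $f$ and tropical push-forward multiplicities add without cancellation. The dual $f^\vee \subset \Delta^\vee$ is then a horizontal lattice segment inside $\Delta$, and hence contained in some horizontal slice $\Delta \cap \{y = y^*\}$ at an integer height $y^*$. By $h$-transversality this slice is a lattice interval of length $w(y^*) \le \mathtt w(\Delta)$, so $\m(e) \le \mathrm{mult}_C(f) = \mathrm{length}(f^\vee) \le \mathtt w(\Delta)$.

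The main obstacle will be handling the push-forward when $h$ is not an immersion — for instance when several $\Gamma$-edges fold onto the same image segment with different orientations, or when distinct vertices of $\Gamma$ are identified in the image. In each such degenerate case one must verify that the induced multiplicity on the image edge is a positive integer equal to the sum of multiplicities of its covering $\Gamma$-edges. This is standard in tropical geometry, since edge multiplicities of balanced tropical $1$-cycles are intrinsically non-negative integers, so the inequality $\mathrm{mult}_C(f) \ge \m(e)$ for any single covering edge is immediate.
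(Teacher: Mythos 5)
Your proof is correct, but it takes a genuinely different route from the paper's. The paper inscribes $\Delta$ in an $h$-transverse parallelogram $\square$ of the same width, picks a primitive $m \in M$ along a non-horizontal side of $\square$, and studies the composed parametrized tropical curve $h_m \colon \Gamma \to \RR$: because $m$ has nonzero $y$-coordinate, $h_m$ does not contract elevators, and a short computation with the complementary regions $\square \setminus \Delta$ shows that the degree of $h_m$ is exactly $\mathtt{w}(\Delta)$. Since every regular fiber of $h_m$ has total multiplicity equal to that degree, and the elevator $e$ contributes $\m(e)$ to such a fiber, the bound follows. You instead push $\Gamma$ forward to a weighted balanced $1$-cycle $C = h_*\Gamma$ in $\RR^2$ and apply the duality with Newton subdivisions of $\Delta$; this is essentially the Legendre-duality argument that the authors explicitly restrict to embedded curves (``for embedded tropical curves, this bound follows from Legendre duality. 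In general, we argue as follows''), extended to the non-embedded case via push-forward. Your extension is valid, and the two approaches buy the same bound: the paper's is more elementary and self-contained, while yours is conceptually cleaner but quietly invokes the fact that any balanced positively weighted $1$-complex in $\RR^2$ with recession fan dual to $\Delta$ is dual to a lattice subdivision of $\Delta$ (equivalently, is the corner locus of a convex piecewise linear function with Newton polygon $\Delta$) --- a standard result, but one the paper evidently preferred not to rely on. One minor imprecision: the justification ``edge multiplicities of balanced tropical $1$-cycles are intrinsically non-negative integers'' is not quite right as stated --- tropical cycles in general can have negative weights --- but what you actually need, and what does hold, is that the push-forward weight on an image edge is defined as the \emph{unsigned} sum $\sum_{e'} \m(e')$ over the $\Gamma$-edges covering it, so no cancellation is possible and $\mathrm{mult}_C(f) \ge \m(e)$ is indeed immediate.
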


    \begin{proof}
    Recall that the multiplicity $\m(h,e)$ is defined to be the integral length of $\frac{\partial h}{\partial \vec e }$. Therefore, for embedded tropical curves, this bound follows from Legendre duality. In general, we argue as follows. For any $m \in M$, consider the parametrized tropical curve $h_m\:\Gamma\to\RR$ given by the composition of $h$ with the linear functional $m \: N_\RR \to \RR$. Let $d_m$ be the sum of multiplicities of the legs of $\Gamma$ having positive $h_m$-slope. If $p \in \RR$ is a point that is not an image of a vertex of $\Gamma$, then by balancing, the number of preimages $h_m^{-1}(p)$ counted with edge multiplicities equals $d_m$. Thus, to show the claim, it suffices to find an $m \in M$ for which $d_m=\mathtt w(\Delta)$ and $h_m$ does not contract elevators of $\Gamma$.

    In order to do so, note first that we can inscribe $\Delta$ in an $h$-transverse integral parallelogram $\square \subset M_\RR$ with two horizontal sides and width $\mathtt w(\square)=\mathtt w(\Delta)$. We now claim that the primitive integral vector $m$ along a non-horizontal side of $\square$ and having positive $y$-component satisfies the assertion. Indeed, $h_m$ does not contract elevators, since the $y$-coordinate of $m$ cannot be $0$. To calculate the degree of $h_m$, pick a point $p < \min_{v \in V(\Gamma)}\left \{h_m(v) \right \}$ in $\RR$. All preimages $h_m^{-1}(p)$ are contained in the set of legs   $l_i$ of $\Gamma$, whose slopes in $N_{\RR}$ have negative inner product with $m$, i.e., $\left(\frac{\partial h}{\partial \vec l_i}, m \right) < 0$.   Let $\square_l^-$ (resp., $\square_r^-$) be the connected component of $\square \setminus \Delta$ in the bottom left (respectively, bottom right) of the parallelogram $\square$; both are (possibly empty, non-convex) polygons. Note that outer normals along $\square_l^-$ and $\square_r^-$ sum to zero, and $m$ has $y$-coordinate $1$ since $\Delta$ is $h$-transverse. Thus, the sum of multiplicities $\sum \m(h_m,l_i)$ over the legs $l_i$, whose slopes have negative inner product with $m$, equals the sum of the integral lengths of the bottoms of $\Delta$, $\square_l^-$ and $\square_r^-$. The latter is nothing but the length of the bottom of $\square$, i.e., $\mathtt w (\square)=\mathtt w (\Delta)$. Thus, for the tropical curve $h_m\: \Gamma \to \RR$, we have $d_m=\mathtt w(\Delta)$, as claimed.
    \end{proof}
    
    In our setup, a vertical segment connecting two floors in a floor decomposed tropical curve can contain several elevators. It will be convenient to give a name to such segments, which are special cases of elevators in the sense of \cite{BM08}:
    
    \begin{defn}\label{defn:basic_floor_to_floor}
    Let $h\:\Gamma \to N_{\RR}$ be a floor decomposed tropical curve and $E \subset \Gamma$ a subgraph such that $h(E)$ is a bounded interval with constant $x$-coordinate in $N_{\RR}$. We will call $E$ a {\it basic floor-to-floor elevator} if it satisfies the following properties:
	\begin{enumerate}
	    \item for any edge/leg $e$ of $\Gamma\setminus E$ adjacent to $E$, $h(e) \cap h(E)$ is one of the endpoints of $h(E)$,
	    \item for each of the two endpoints of $h(E)$ there is a unique vertex $u_i$ of $E$ that gets mapped to it, and
	    \item the $u_i$ have exactly $2$ adjacent edges/legs not contained in $E$ and their slopes have $x$-coordinates $1$ and $-1$.
	\end{enumerate}
    \end{defn}

	\subsection{Tropicalization and realizability} \label{subsec:local liftability}
	In this section, we give a local version of Speyer's well-spacedness condition for the realizability of elliptic curves \cite{S14}; see also \cite[Theorem~1.1]{K12}, \cite[Theorem~6.9]{BPR16} and \cite[Theorem~D]{R17} for other generalizations.
	
    If a parametrized curve $f\: X \to S_\Delta$ is defined over the valued field $K$, one can naturally construct its \emph{tropicalization}, that is, a parametrized tropical curve $h \: \Gamma \to N_{\RR}$. Parametrized tropical curves arising this way are called \emph{realizable}. The tropicalization construction is well-known and can be found, e.g., in \cite[\S 4.2]{CHT20a}. For the convenience of the reader, we recall it in the special case of a non-constant rational function $f$ on a smooth proper marked curve $X$. Notice that to turn $(X,f)$ into a parametrized curve $f\:X\to\PP^1$, one must first add the zeroes and poles of $f$ to the collection of marked points on $X$. We always do so when talking about tropicalizations of rational functions, without necessarily mentioning it. 
    
    So, let $f\:X\to\PP^1$ be a parametrized curve, and $X^0$ the stable model of $X$ over $\Spec(K^0)$. The underlying graph of $\Gamma:=\trop(X)$ is the dual graph of the central fiber $\widetilde X$, i.e., the vertices correspond to irreducible components of $\widetilde{X}$, the edges -- to nodes, the legs -- to marked points, and the natural incidence relation holds. The weight of a vertex is given by the geometric genus of the corresponding irreducible component. Finally, the length of an edge corresponding to a node $z$ is given by the valuation $\val(\lambda)$, where $\lambda\in K^{00}$ is such that \'etale locally at $z$, the total space of $X^0$ is given by $xy=\lambda$.
    It remains to define the map $h:=\trop(f)\colon \Gamma \to \RR$. Since $h$ is affine along edges and legs, it is sufficient to specify its values at vertices together with its slopes along legs, which are given as follows. For the irreducible component $\widetilde X_v$ of $\widetilde X$ corresponding to a vertex $v\in V(\Gamma)$, let $\lambda_v \in K^\times$ by such that $\lambda_v f$ is an invertible function at the generic point of $\widetilde X_v$. Then $h(v):=\val(\lambda_v)$. The slope of $h$ along a leg $l\in L(\Gamma)$ is defined to be the order of pole of $f$ at the corresponding marked point.
    
    Let $\lambda_v$ be as above. The \emph{scaled reduction} of $f$ at $\widetilde X_v$ with respect to $\lambda_{v}$ is defined to be the non-zero rational function $(\lambda_{v} f)|_{\widetilde X_v}$ on $\widetilde X_v$. Notice that although scaled reductions of $f$ at $\widetilde X_v$ depend on $\lambda_v$, their divisors do not. In fact, it is straightforward to check that the divisor of a scaled reduction is given by $-\sum_{\vec e\in\Star(v)}\m(h,e)p_e$, where $p_e\in \widetilde X_v$ is either the node of $\widetilde X$ or the reduction of the marked point corresponding to $e$. In particular, $h$ has integral slopes, satisfies the balancing condition, and therefore $(\Gamma,h)$ is a parametrized tropical curve, cf. \cite[Lemma~2.23]{Tyo12}.
    
    We say that a parametrized tropical curve $h\:\Gamma\to N_\RR$ is {\em weakly faithful} at a vertex $v\in V(\Gamma)$ if $h$ does not contract the star of $v$ to a point. If $(\Gamma,h)$ is the tropicalization of a parametrized curve, then being not weakly faithful means that the scaled reductions of all monomial functions at $\widetilde X_v$ are constant.

	\begin{lem} \label{lem:tropical modification}
	Let $f$ be a non-constant rational function on a smooth proper marked curve $X$, and $v$ a vertex of $\trop(X)$. Then there is $\mu \in K$ such that $\trop(f - \mu)$ is weakly faithful at $v$.
	\end{lem}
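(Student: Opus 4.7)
If $\trop(f)$ is already weakly faithful at $v$, the choice $\mu=0$ works. Otherwise, by the characterization of non-weak-faithfulness recalled immediately before the lemma, the scaled reduction $\tilde c:=\widetilde{\lambda_v f}|_{\widetilde X_v}$ is a non-zero constant in $\widetilde K^\times$. The strategy is to take $\mu$ to be the value of $f$ at a generic $K^0$-section of the stable model $X^0$ reducing into $\widetilde X_v$; then $g:=f-\mu$ will acquire a new zero along this section, yielding a leg of $\trop(g)$ attached at $v$ with non-zero slope. Intuitively, the constancy of the scaled reduction says that $f$ looks like the single value $\mu$ on $\widetilde X_v$, so subtracting $\mu$ must depress $f$ to have a new zero along a generic section of $\widetilde X_v$.

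Concretely, choose a closed point $p\in\widetilde X_v$ which is smooth in the special fiber of $X^0$ and is not the reduction of any marked point of the parametrized curve $(X,f)$; only finitely many points of $\widetilde X_v$ are excluded, so such a $p$ exists. By Hensel's lemma, $p$ lifts to a $K^0$-section $P\:\Spec K^0\to X^0$. Its generic fiber $P_\eta\in X(K)$ is not a pole of $f$, so $\mu:=f(P_\eta)\in K$ is well-defined; the equality $\widetilde{\lambda_v f}(p)=\tilde c$ forces $\val(\lambda_v\mu)=0$, hence $\val(\mu)=-h(v)$. Set $g:=f-\mu$; this is a non-zero rational function since $f$ is non-constant.

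By construction $g(P_\eta)=0$, so $P_\eta$ is a zero of $g$ of order at least one, and thus a marked point of the parametrized curve $(X,g)$. The leg of $\trop(g)$ corresponding to $P_\eta$ is attached at $v$, since the section $P$ reduces to $p\in\widetilde X_v$, and its slope equals $-\ord_{P_\eta}(g)\le -1$, which is non-zero. Consequently $\Star(v)$ in $\trop(g)=\trop(f-\mu)$ is not contracted to a point, which is the required weak faithfulness at $v$. The main technical subtlety I foresee is ensuring that $\widetilde X_v$ remains a component of the stable model underlying $\trop(f-\mu)$ after the marked structure changes from the zeros of $f$ to those of $g$—in principle, dropping a zero of $f$ that lay on $\widetilde X_v$ could destabilize a genus-zero component. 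This is handled by replacing $X^0$, if necessary, with a common semistable model of $X$ that dominates the stable models for both $(X,f)$ and $(X,f-\mu)$ and still has $\widetilde X_v$ as a component, so that $v$ is a vertex of both tropicalizations.
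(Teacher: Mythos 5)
Your proof is essentially the paper's: both choose a $K$-point $\eta$ of $X$ whose reduction lies on the component $\widetilde X_v$, set $\mu:=f(\eta)$, and exploit that $f-\mu$ vanishes at $\eta$. The only difference is presentational. The paper concludes directly by observing that every scaled reduction of $f-\mu$ at $\widetilde X_v$ must vanish at the reduction of $\eta$ while being a non-zero rational function, hence non-constant. You instead phrase the conclusion in terms of a new contracted leg of $\trop(f-\mu)$ attached at $v$ with negative slope, which forces you to worry about whether $\widetilde X_v$ survives in the stable model attached to the new set of marked points; you correctly flag this and propose passing to a common semistable model, but this extra machinery is avoidable. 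Since the paper's definition of weak faithfulness is precisely a statement about scaled reductions at $\widetilde X_v$ (and this is how the lemma is actually applied, e.g., in Proposition~\ref{prop:weight one midpoint}), arguing directly with scaled reductions sidesteps the model-change issue entirely and is the cleaner route.
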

	
	\begin{proof}
	Let $\eta$ be a $K$-point of $X$ at which $f$ is defined and such that the reduction of $\eta$ lies on the component $\widetilde X_v$ of the reduction of $X$ corresponding to the vertex $v$. Set $\mu \coloneqq f(\eta) \in K$. Since $f - \mu$ has a zero at $\eta$, every scaled reduction of $f - \mu$ at $\widetilde X_v$ needs to vanish at the reduction of $\eta$. However, since $f$ is non-constant, every scaled reduction of $f - \mu$ is non-zero. Therefore, the scaled reductions of $f-\mu$ at $\widetilde X_v$ are non-constant, and $\trop(f-\mu)$ is weakly faithful at $v$.
	\end{proof}
	
    A \emph{cycle} of a graph $\GG$ is a connected subgraph of $\GG$ in which each vertex has valence $2$. A cycle $O$ of a parametrized tropical curve $h\: \Gamma \to N_{\RR}$ is a subgraph of $\Gamma$ induced by a cycle of the underlying graph $\GG$, possibly containing additional legs that get contracted by $h$.
	
	\tikzset{every picture/.style={line width=0.75pt}}
    \begin{figure}[ht]	
    \begin{tikzpicture}[x=0.8pt,y=0.8pt,yscale=-0.7,xscale=0.7]
    \import{./}{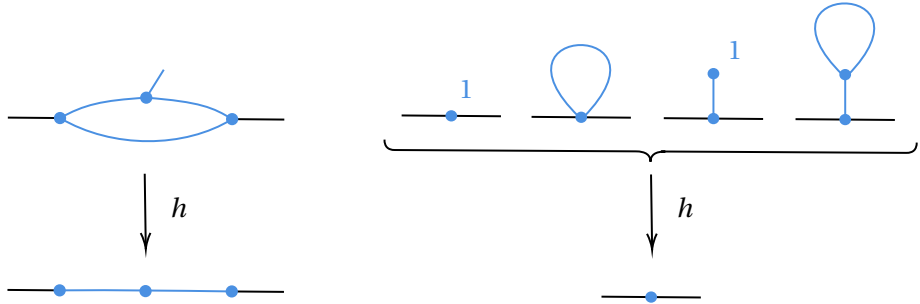}
    \end{tikzpicture}		
    \caption{From left to right: a flattened cycle, the two versions of an elliptic component, and the two versions of a contracted elliptic tail.}
    \label{fig:flattenedcycle}
    \end{figure}
	
	\begin{defn} \label{defn:flattened cycle}
	Let $h\: \Gamma \to N_\RR$ be a parametrized tropical curve. We give the following names to certain subgraphs $O$ of $\Gamma$  (see Figure~\ref{fig:flattenedcycle}):
	\begin{enumerate}
	    \item $O$ is called a \emph{flattened cycle} if it is a cycle, whose image in $N_{\RR}$ is a bounded interval, and such that for each endpoint of $h(O)$ there is exactly one vertex of $O$ mapped to it;
	    \item $O$ is called an \emph{elliptic component} if it is either a single vertex of weight $1$ or a single vertex of weight $0$ with an adjacent loop (which necessarily gets contracted by $h$);
	    \item $O$ is called a \emph{contracted elliptic tail} if $h(O)$ is a point, and $O$ contains $2$ vertices $v_1$ and $v_2$ connected by a single edge $e$, such that $v_1$ has weight zero, and all edges adjacent to $v_2$ are contained in $O$. Furthermore, $v_2$ has either weight $1$ and is adjacent only to $e$, or it has weight $0$ and is adjacent to an additional loop.
	\end{enumerate}
	\end{defn}
    
	\begin{prop} \label{prop:weight one midpoint}
    Let $h\:\Gamma \to N_{\RR}$ be a realizable tropical curve. Assume that $E \subset \Gamma$ is a subgraph that for some choice of coordinates satisfies the three axioms of Definition~\ref{defn:basic_floor_to_floor} for a basic floor-to-floor elevator. Suppose furthermore, that $O \subset E$ is a subgraph which is either a flattened cycle, an elliptic component, or a contracted elliptic tail. Finally, suppose that $\overline{E \setminus O}$ consists of at most two connected components, each of which is a weightless subgraph of\; $\Gamma$ that maps injectively to $h(E)$ away from contracted legs.
    Then either $h(O) = h(E)$ or the complement of $h(O)$ in $h(E)$ consists of two intervals of the same length.
	\end{prop}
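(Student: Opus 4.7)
My plan is to adapt Speyer's well-spacedness strategy to this local setting by tropicalizing a scalar-shifted pullback of the $x$-monomial along the realizing curve. Let $f\colon X \to S_{\Delta}$ be a parametrized curve realizing $(\Gamma, h)$, and let $c \in \RR$ denote the constant $x$-coordinate of $h(E)$ in the chosen coordinates. Using Lemma~\ref{lem:tropical modification} applied to $x \circ f$, pick a $K$-point $\eta \in X$ reducing to a chosen vertex $v^{*} \in O$, set $\mu := (x\circ f)(\eta) \in K$ so that $\val(\mu) = c$, and define $\phi := x\circ f - \mu$ and $G := \trop(\phi)$.

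I would first verify that $G' := G|_E - c$ is a non-negative, integer-sloped, piecewise linear function on $E$ vanishing at $u_1$ and $u_2$. Non-negativity follows from the ultrametric inequality, since both $x\circ f$ and $\mu$ have tropical value $c$ on $E$. The vanishing at $u_i$ uses that the two edges of $\Gamma\setminus E$ meeting $u_i$ have $x$-slopes $\pm 1$: on one side $\val(x\circ f)>c$ and on the other $\val(x\circ f)<c$, so the cancellation breaks immediately outside $E$ and continuity forces $G(u_i)=c$.

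Next I would analyze $G'$ on the chains $E_1, E_2 \subseteq \overline{E \setminus O}$ via the scaled reductions $\alpha_v := \mu^{-1}(x\circ f)|_{\widetilde X_v}$. Since every edge inside $E$ has $x$-slope zero, $\alpha_v$ is a constant on $\widetilde X_v$ for every vertex of $E$ other than $u_1, u_2$; gluing through $O$ and using $\alpha_{v^{*}}(\tilde\eta)=1$ forces this constant to equal $1$ at every interior vertex of $E$ and at $w_1, w_2$. On $\widetilde X_{u_i} \cong \PP^1$ one computes $\alpha_{u_i}(z)=\gamma_i(z-p_+)/(z-p_-)$, and the condition $\alpha_{u_i}(p_{w_i})=1$ makes $\alpha_{u_i}-1$ have a simple zero at the chain node $p_{w_i}$, so the slope of $G'$ on $E_i$ (in tropical length) starts at $+1$ at $u_i$ and propagates along $E_i$ by balancing---provided $\eta$ is chosen generically so that no other zero of $\phi$ reduces to an interior chain vertex. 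Combined with the equality $m_1=m_2 =: m$ of the $y$-multiplicities on the two chains (forced by balancing at $w_1, w_2$ using that all edges in $O$ have $x$-slope zero), this yields $L_i = m\cdot G'(w_i)$, so the claim $L_1=L_2$ reduces to $G'(w_1)=G'(w_2)$.

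It remains to verify $G'(w_1) = G'(w_2)$. In the elliptic component case ($O = \{v_0\}$) and in the contracted elliptic tail case (where both chains attach at $v_1$), we have $w_1 = w_2$ and the identity is automatic. The substantive case is the flattened cycle, where $w_1 \neq w_2$ lie at the two endpoints of $h(O)$; here I would pass to the second-order scaled reduction $\delta := \phi/\mu - 1$, which, using $\alpha \equiv 1$ on the cycle, descends to a non-zero rational function on the loop of components of $\widetilde X$ corresponding to the cycle. A Speyer-type argument---combining the degree-$0$ condition on $\delta$'s divisor on each component, balancing of $\trop(\delta)$ around the loop, and the linearity of $G'$ along each cycle arc (by balancing at interior cycle vertices with trivial reductions)---then forces $G'(w_1)=G'(w_2)$. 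The degenerate alternative $h(O)=h(E)$ corresponds to $L_1=L_2=0$ and requires no argument. The main obstacle I expect is this last step for the flattened cycle, which requires careful translation of the higher-order scaled-reduction constraints on the loop of components into the tropical well-spacedness identity.
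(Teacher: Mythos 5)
Your overall strategy matches the paper's: realize $(\Gamma,h)$, take $m=(1,0)$, choose $\mu$ via Lemma~\ref{lem:tropical modification} so that a scaled reduction of $G:=f^*(x^m)-\mu$ is non-constant on the genus-$1$ piece, and read off length constraints from $\trop(G)$ along the chains. The preliminary observations (constancy of the scaled reduction of $f^*(x^m)$ on interior components, degree $1$ on $\widetilde X_{u_i}$, outgoing slope $\mp 1$ at $u_i$, equality $\trop(G)(u_1)=\trop(G)(u_2)$) are all there.

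The gap is in how you pin down the slope of $\trop(G)$ on the chains $E_1,E_2$. You assert that by choosing $\eta$ generically you can ensure no zero of $\phi$ reduces to an interior chain component, so that the slope ``propagates as $\pm 1$ by balancing.'' This is neither justified nor justifiable as stated: for an interior chain vertex $v$, the sum of outgoing slopes of $\trop(G)$ along chain edges equals the number $Z_v\ge 0$ of zeros of $G$ reducing to non-special points of $\widetilde X_v$, and there is no a priori reason $Z_v$ can be made zero by moving $\eta$ (indeed the relevant scaled reduction at $\widetilde X_v$ is a perturbation $c_1-a$ of a fixed rational function $c_1$ by a free constant $a$; if $c_1$ has a pole of order $\ge 2$ at a chain node, $Z_v>0$ is forced for every $a$). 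What actually forces the slopes to be $\pm 1$ everywhere is the combination of (i) convexity of $\trop(G)$ along $L$ (from $Z_v\ge 0$ and the absence of poles away from $u_1,u_2$), and (ii) the arithmetic-genus-$1$ fact that a non-constant rational function on $\widetilde X_u$ has poles of total multiplicity $\ge 2$, which forces the outgoing slopes of $\trop(G)$ at $u'$ along $L$ to sum to $\ge 2$, hence (by the bound $\le 1$ coming from convexity) to be exactly $1$ on each side. Your proposal never uses the genus-$1$ structure of $\widetilde X_u$, even though it is the entire content of the Speyer-type phenomenon and the only thing distinguishing $O$ from a trivial weightless vertex, which could sit anywhere. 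This omission must be repaired.

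For the flattened cycle you acknowledge the step is open and sketch a second-order scaled-reduction argument in the spirit of Speyer's original proof. The paper instead stays at first order with the same $\trop(G)$: convexity on \emph{both} concatenations $L_1\cup O_i\cup L_2$ forces $\trop(G)$ to be constant on one $O_i$, hence $\trop(G)(u_1')=\trop(G)(u_2')$ and linearity on $L_1$, giving $\ell(L_1)\le\ell(L_2)$; swapping $u_1'$ and $u_2'$ in the choice of $\mu$ gives the reverse inequality. If you want to complete your route you would need to carry out the second-order reduction carefully, but the first-order argument is both simpler and uniform with Case~1. Minor point: with the paper's convention $h(v)=\val(\lambda_v)$ one has $\val(\mu)=-c$, not $c$; this does not affect the structure, but be consistent.
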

	
	\begin{proof}
    Since $h\: \Gamma \to N_{\RR}$ is realizable, there is a parametrized curve $f\: X \to S_\Delta$, whose tropicalization is $h\: \Gamma \to N_{\RR}$. Let $\widetilde X$ be the central fiber of the stable model of $X$, and let $\widetilde X_E$ be the subcurve of $\widetilde X$ that corresponds to vertices in $E$; similarly, let $\widetilde X_{u_1}$ and $\widetilde X_{u_2}$ be the components corresponding to the (unique) vertices $u_1, u_2 \in E$ that get mapped to the endpoints of $h(E)$.

    Without loss of generality we may assume that $E$ satisfies the three axioms of Definition~\ref{defn:basic_floor_to_floor} for the standard coordinates in $N=\ZZ^2$. In particular, $h(E)$ is vertical. Set $m := (1,0) \in M=\ZZ^2$. Since $h(E)$ is vertical and the only adjacent edges/legs to $E$ whose slope has non-zero $x$-coordinate are by assumption adjacent to $u_1$ or $u_2$, we have that any scaled reduction of $f^*(x^m)$ at components of $\widetilde X_{E}$ different from the $\widetilde X_{u_i}$ has no zeroes or poles -- thus, it needs to be constant. In particular, no poles of $f^*(x^m)$ on $X$ specialize to components of $\widetilde X_{E}$ different from the $\widetilde X_{u_i}$. On the other hand, and again by assumption, for each $u_i$ there are exactly two adjacent edges/legs whose slope has non-zero $x$-coordinate, and that slope is $\pm 1$. Accordingly, any scaled reduction of $f^*(x^m)$ at $\widetilde X_{u_i}$ has a simple pole and a simple zero on $\widetilde X_{u_i}$ -- thus, it necessarily has degree $1$.

    Case 1: $O$ \textit{ is an elliptic component or a contracted elliptic tail.} Let $u$ be the (unique) vertex in $O$ that either has weight $1$, or is adjacent to a loop. Let $\widetilde X_u$ be the component of $\widetilde X$ corresponding to $u$. Note first that we must have $u \neq u_i$ since there can be no rational function of degree $1$ on a curve of arithmetic genus $1$. By Lemma~\ref{lem:tropical modification}, we may choose $\mu \in K$ such that any scaled reduction of $G \coloneqq f^*(x^m) - \mu$ at $\widetilde X_u$ is non-constant.
    The rational functions $f^*(x^m)$ and $G$ may have different zeroes on $X$, and to consider the tropicalization of $G$, we add the new zeroes as marked points to $X$. Note, however, that the poles of $f^*(x^m)$ on $X$ remain unaffected when subtracting $\mu$.

    Let $L \subset E$ be the subgraph of $E$ spanned by the edges of $E$ that do not get contracted by $h$. By the assumptions on $E$, we can naturally identify $L$ with a bounded line segment. Let $\gamma_i$ denote the edge in $L$ adjacent to $u_i$ and let $p_i \in \widetilde X_{u_i}$ denote the point corresponding to $\gamma_i$. Let $\lambda_{m, E} \in K$ be such that $\lambda_{m,E} f^*(x^m)$ is regular and invertible at generic points of $\widetilde X_E$; such a $\lambda_{m, E}$ exists since $h(E)$ has constant $x$-coordinate and $m = (1,0)$. The corresponding scaled reduction $(\lambda_{m,E} f^*(x^m))|_{\widetilde X_E}$ has, as explained above, constant value $\widetilde c$ along components of $\widetilde X_E$ different from the $\widetilde X_{u_i}$, and has degree $1$ on the $\widetilde X_{u_i}$. Furthermore, the restriction of $\lambda_{m, E} G$ to $\widetilde X_E$ is $(\lambda_{m,E} f^*(x^m))|_{\widetilde X_E} - \widetilde c$. In particular, $\lambda_{m, E} G$ is still a non-zero rational function on the $\widetilde X_{u_i}$ and we get $\trop(G)(u_1)=\trop(G)(u_2) = \val(\lambda_{m,E})$. Furthermore, the scaled reduction $(\lambda_{m, E} G)|_{\widetilde X_{u_i}}$ has degree $1$ with a simple zero at $p_i$; thus $\trop(G)$ has outgoing slope $-1$ at $u_i$ along $\gamma_i$.
    
    We can now deduce the claim from the properties of $\trop(G)$, see Figure~\ref{fig:tropg1} for an illustration. Let $u' \in L$ be the unique vertex with $h(u') = h(u)$; that is, if $O$ is an elliptic component we set $u':=u$, and if it is a contracted elliptic tail, then $u$ and $u'$ are the two vertices of $O$. Recall first that $\trop(G)$ is harmonic, no poles of $G$ on $X$ specialize to components of $\widetilde X_E$ different from the $\widetilde X_{u_i}$, and the outgoing slope of $\trop(G)$ at $u_i$ along $\gamma_i$ is $-1$. We conclude
    that the sum $s_v:=\sum_{\vec \gamma \in \Star(v) \cap L} \frac{\partial \trop(G)}{\partial \vec \gamma}$ of outgoing slopes of $\trop(G)$ at any $v\in L\setminus\{u_1, u_2\}$ along edges of $L$ is non-negative.
    It follows that $\trop(G)$ is convex along $L$, considered as a line segment.  In particular, the outgoing slope of $\trop(G)$ along the two edges of $L$ at $u'$ are both at most 1.
    On the other hand, since $\widetilde X_u$ has arithmetic genus $1$ and any scaled reduction $\widetilde G_{u}$ of $G$ at $\widetilde X_u$ is, by the construction of $G$, not constant, the negative part of $\mathrm{div}(\widetilde G_{u})$ needs to have multiplicity at least $2$; this means that $\trop(G)$ needs to have positive outgoing slopes at $u$ that sum to at least $2$. Since no poles of $G$ specialize to $\widetilde X_u$ or $\widetilde X_{u'}$, it follows that $\trop(G)$ needs to have outgoing slopes at $u'$ along edges contained in $L$ that sum to at least $2$.
    Combining these observations, we get that $\trop(G)$ has outgoing slope $1$ at $u'$ along each of the $2$ adjacent edges in $L$. Therefore, by convexity of $\trop(G)$,
    it is affine on $L$ except at $u'$, and its slope on both connected components of $L\setminus \{u'\}$ is $1$.
    Since $\trop(G)(u_1)=\trop(G)(u_2)$, this implies that $u'$ is the midpoint of $L$; balancing of $h$ then gives that also $h(u) = h(u')$ is the midpoint of $h(E) = h(L)$, as claimed.
    	
	\tikzset{every picture/.style={line width=0.75pt}}
    \begin{figure}[ht]	
    \begin{tikzpicture}[x=0.8pt,y=0.8pt,yscale=-0.7,xscale=0.7]
    \import{./}{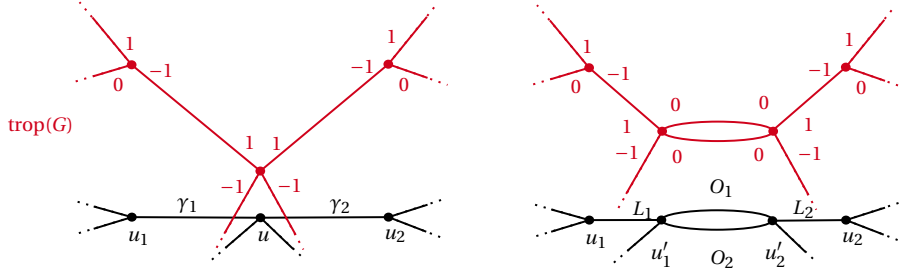}
    \end{tikzpicture}		
    \caption{In red, the graph of $\trop(G)$, where the numbers indicate outgoing slopes. On the left, Case 1, where $O$ contains a single vertex of weight $1$, $u = u'$, and $L$ contains only the two edges $\gamma_1$ and $\gamma_2$; if there are contracted legs contained in $E$, $L$ may contain more edges. On the right, Case 2, with $\trop(G)$ constant along both $O_i$; in general, it is non-constant along at most one of the $O_i$.}
    \label{fig:tropg1}
    \end{figure}

    Case 2: $O$ \textit{is a flattened cycle.}
	As in Case 1, let $u_1, u_2$ be the vertices that get mapped to endpoints of $h(E)$, and let $u_1'$ and $u_2'$ be the vertices of $O$ that get mapped to the endpoints of $h(O)$. If $u_1 = u_1'$ and $u_2 = u_2'$, there is nothing to show. So suppose $u_1 \neq u_1'$ and we first claim that then also $u_2 \neq u_2'$. Assume to the contrary that $u_2 = u_2'$. Since $O$ is a cycle and $u_1 \neq u_1'$, the closure of  $\widetilde X_E \setminus \left(\widetilde X_{u_1} \cup \widetilde X_{u_2}\right)$ intersects $\widetilde X_{u_2}$ in two points $p_1, p_2$. Since a scaled reduction $(\lambda_{m,E} f^*(x^m))|_{\widetilde X_E}$ as in Case 1 has constant value $\widetilde c$ along components of $\widetilde X_E$ different from the $\widetilde X_{u_i}$, the scaled reduction $(\lambda_{m,E} f^*(x^m))|_{\widetilde X_{u_2}}$ at $\widetilde X_{u_2}$ needs to achieve the same value $\widetilde c$ at both intersection points $p_1$ and $p_2$; since the scaled reduction has degree $1$ on $\widetilde X_{u_2}$, this is not possible, and hence $u_2 \neq u_2'$.
	
	The remaining argument is similar to Case 1 and we omit some of the details.
	Denote by $L$ the subgraph of $E$ spanned by edges that do not get contracted by $h$; this time, we may identify $L$ with two line segments $L_i$ between $u_i$ and $u_i'$ and two line segments $O_1$ and $O_2$ between $u_1'$ and $u_2'$.
    By Lemma~\ref{lem:tropical modification}, we may choose $\mu$ such that the scaled reduction of $G \coloneqq f^*(x^m) - \mu$ at $\widetilde X_{u_1'}$ is not constant.
    Arguing as in Case 1, $\trop(G)(u_1) = \trop(G)(u_2)$, $\trop(G)$ has outgoing slope $-1$ at $u_i$ along $L_i$, and $\trop(G)$ is convex on both line segments $L_1 \cup O_i \cup L_2$. It follows that $\trop(G)$ needs to be constant along one of the $O_i$; thus $\trop(G)(u_1') = \trop (G)(u_2')$. Since $\trop(G)$ is not constant at $u_1'$, it furthermore follows that $\trop(G)$ is linear along $L_1$. Combining these observations, we get $\ell(L_1) \leq \ell(L_2)$ for the lengths of the $L_i$. 
    Repeating the argument for $G'$ with scaled reduction at $\widetilde X_{u_2'}$ non-constant gives $\ell(L_2) \le \ell(L_1)$, and therefore $\ell(L_1) = \ell(L_2)$ as claimed.
	\end{proof}
	
	\begin{rem}
	In the proof of Proposition~\ref{prop:weight one midpoint} we considered the tropicalization of the function $G = f^*(x^m) - \mu$, and one may view the obtained function as an additional tropical coordinate (cf. Figure~\ref{fig:tropg1}). This is a version of a \emph{tropical modification}; we refer the interested reader to  \cite{K18} for a survey and further references.
	\end{rem}
	
	\section{Local liftability results for families of tropical curves} \label{sec:liftability}
	In \cite{CHT20a} we introduced the tropicalization of one-parameter families of parametrized cur\-ves and studied the properties of the induced moduli maps $\alpha\: \Lambda \to M_{g,n, \nabla}^{\trop}$ to the tropical moduli space over certain strata, called nice strata and simple walls. This was then used to locally lift the family of tropical curves, an essential ingredient in the degeneration argument for $\PP^2$. For the convenience of the reader, we recall this construction in \S~\ref{subsec:tropicalization for one-parameter families} and the properties of $\alpha$ in \S~\ref{subsec:simple walls and nice strata}.
	
For the remainder of this section, we fix a family of parametrized curves $f\colon \cX \dashrightarrow S_{\Delta}$ such that $\cX \to B$ admits a split stable model $\cX^0 \to B^0$ over $\Spec(K^0)$, where $B^0$ is a prestable model of the projective curve with marked points $(B, \tau_\bullet)$, cf. \S~\ref{sec:notterm}. We assume, in addition, that the irreducible components of the reduction $\widetilde B$ of $B^0$ are smooth.

	 \subsection{Tropicalization for one-parameter families.}\label{subsec:tropicalization for one-parameter families}
    
    The \emph{tropicalization of the family of parame\-trized curves} $f\colon \cX \dashrightarrow S_{\Delta}$ with respect to a fixed model $\cX^0 \to B^0$ is a family of parametrized tropical curves $h\colon \Gamma_\Lambda \to N_{\RR}$ as in \S~\ref{subsec: moduli and families of parametrized tropical curves}. We now sketch its construction, and refer to \cite[\S~4]{CHT20a} for details. 
    
    The base $\Lambda$ of the tropical family is the tropicalization of the base curve $(B, \tau_{\bullet})$ with respect to the model $B^0$. For a vertex $v \in V(\Lambda)$ denote by $\widetilde B_v$ the irreducible component of the reduction $\widetilde B$ corresponding to $v$. Let $\eta \in B(K)$ be a $K$-point, whose reduction $s \coloneqq  \widetilde \eta$ is a non-special point of $\widetilde B_v$, i.e., $s$ is neither a node of $\widetilde B$, nor a marked point. Then the fiber $h_v\: \Gamma_v \to N_\RR$ of the family $h\colon \Gamma_\Lambda \to N_{\RR}$ over $v$ is defined to be the tropicalization of $f|_{\cX_\eta}\colon \cX_{\eta} \to S_\Delta$. As long as $s\in \widetilde B_v$ is non-special, one checks that the parametrized tropical curves obtained this way are isomorphic for different choices of $\eta$. Furthermore, the splitness of the model $\cX^0 \to B^0$ ensures that they can be identified in a {\em canonical} way. 
    More generally, if $e\in\overline{E}(\Lambda)$ and $q\in e$ is a $\val(K^\times)$-rational point, then we can refine the model $B^0$ of $B$, such that $q$ becomes a vertex in the tropicalization of the base with respect to the refined model. The fiber $h_q\: \Gamma_q \to N_\RR$ over $q$ now is defined as before. One can check that the fibers over the $\val(K^\times)$-rational points of $\Lambda$ satisfy the 5 axioms of \S~\ref{subsec: moduli and families of parametrized tropical curves}, and therefore can be extended uniquely by continuity to a family over the whole of $\Lambda$.
    
    To verify the first axiom, recall that the slopes of $h_q$ along the legs of $\Gamma_q$ are given by the order of pole along the marked points of the pull-backs of monomials $(f|_{\cX_\eta})^*(x^m)$. Since marked points of $\cX_\eta$ are restrictions of marked points $\sigma_\bullet$ of $\cX$, the slopes of $h_q$ along the legs of $\Gamma_q$ are globally determined by the order of pole of $f^*(x^m)$ along the $\sigma_\bullet$, and hence independent of $q$.
    
    If $s \in \widetilde B_v$ is a special point corresponding to an edge/leg $e$ of $\Lambda$, the underlying graph $\GG_e$ of fibers over $e^\circ$ is by construction the dual graph of the fiber $\widetilde \cX_s$ over $s$. In particular, it is constant along $e^\circ$, which verifies the second axiom. Furthermore, the family $\cX^0 \to B^0$ prescribes a one-parameter degeneration of stable curves in a neighbourhood of $s$ in $\widetilde B_v$. Such a degeneration naturally induces an edge contraction $\varphi_{\vec e}\: \GG_e \to \GG_v$ between the dual graphs, as required by the third axiom. It remains to check that the edge lengths $\ell(\gamma)$ and the images of vertices $h(u)$ in the fibers are given by integral affine functions along the edges/legs $e$ of $\Lambda$. 
    
    Let us show how to verify this in the case of $\ell(\gamma)$ over an edge $e$, as the remaining cases can be verified in a similar way. Consider the toroidal embedding $B \setminus \left ( \bigcup_i \tau_i \right ) \subset B^0$, whose boundary $\widetilde B \cup \left( \bigcup_i \tau_i \right)$ contains all points over which fibers of $\cX^0 \to B^0$ fail to be smooth. All models are defined over some discretely valued subfield $F\subset K$ with a uniformizer $\pi\in F^{00}$. Let $s \in \widetilde B_v \cap \widetilde B_w$ be the node corresponding to $e$. Then in an \'etale neighbourhood $U$ of $s$ we have $ab = \pi^{k_s}$, where $a = 0$ and $b= 0$ define the components $\widetilde B_v$ and $\widetilde B_w$, respectively. Thus, the length of $e$ in $\Lambda$ is given by $\ell_\Lambda(e) = k_s \val(\pi).$
    
    Let $z$ be the node of $\widetilde \cX_s$ corresponding to $\gamma$. Then, possibly after shrinking $U$, an \'etale neighbourhood of $z$ in the total space $\cX^0$ is given by $xy = g_z$ for some regular function $g_z \in \mathcal O_U(U)$. Let $\psi\: \Spec(F) \to B$ be an $F$-point with image $\eta$ and reduction $s$. Then the length of $\gamma$ in $\trop(\cX_\eta)$ satisfies $\ell(\gamma)=\val(\psi^*(g_z))$ by the tropicalization construction, cf. \S~\ref{subsec:local liftability}. Since $g_z$ vanishes only along $\widetilde B_v \cup \widetilde B_w$ in $U$, there are $k_a, k_b, k_\pi \in \NN$ such that $a^{-k_a} b^{-k_b} \pi^{-k_\pi} g_z$ is regular and invertible on $U$. Thus, $\val(\psi^*(g_z))=k_a\val(\psi^*(a))+k_b\val(\psi^*(b))+k_\pi\val(\pi).$
    Since $ab = \pi^{k_s}$, it follows that $\val(\psi^*(a))+\val(\psi^*(b))=k_s\val(\pi)$, and therefore 
    $$\val(\psi^*(g_z))=(k_a-k_b)\val(\psi^*(a))+k_b\ell_\Lambda(e)+k_\pi\val(\pi).$$
    By the construction, $\val(\psi^*a)$ is nothing but the distance from $\trop(\eta) \in e$ from the vertex $w$, and $k_a\ell_\Lambda(e)+k_\pi\val(\pi)$ and $k_b\ell_\Lambda(e)+k_\pi\val(\pi)$ are the lengths of $\gamma$ in $\GG_v$ and $\GG_w$, respectively, identified via the contraction maps described above. Thus, the length of $\gamma$ along $e$ is given by an integral affine function of slope $k_a - k_b$, as claimed.
    
    \begin{rem}\label{rem:slopes}
    Note that $k_a - k_b$ is the vanishing order at $s$ of the scaled reduction of $g_z$ at $\widetilde B_w$ since $(ab)^{-k_b} \pi^{-k_\pi} g_z$ is regular and invertible at the generic point of $\widetilde B_w$. Thus, the slope $\frac{\partial\ell(\gamma)}{\partial \vec e}$ is given by the vanishing order at $s$ of a scaled reduction of $g_z$ at $\widetilde B_w$. Similarly, for a leg $l$ of $\Lambda$ adjacent to $w$ and corresponding to a marked point $\tau \in B$, the slope $\frac{\partial\ell(\gamma)}{\partial \vec l}$ is the vanishing order of $g_z$ at $\tau$, which  coincides with the vanishing order at $\widetilde \tau$ of a scaled reduction of $g_z$ at $\widetilde B_w$.
    
    The slopes of $\frac{\partial h(u)}{\partial \vec e}$ can be described similarly. Let $s$ be a special point of $\widetilde B_w$ corresponding to an edge/leg $e$. Then a vertex $u$ of $\GG_e$ corresponds to an irreducible component $\widetilde \cX_{s,u}$ of the fiber $\widetilde \cX_s$. There is a unique irreducible component $\widetilde \cX_{u}$ of the surface $\widetilde \cX|_{\widetilde B_w}$ that contains $\widetilde \cX_{s,u}$. Then for $m \in M$, the slope $\frac{\partial h(u)(m)}{\partial \vec e}$ is the order of pole along $\widetilde \cX_{s,u}$ of a scaled reduction of $f^*(x^m)$ at $\widetilde \cX_{u}$, which in the case of a leg $e$ is nothing but the order of pole of $f^*(x^m)$ along the irreducible component of the fiber $\cX_\tau$ that contains $\widetilde \cX_{s,u}$.
    \end{rem}
	\subsection{Simple walls and nice strata} \label{subsec:simple walls and nice strata}
	
	Let $\alpha \: \Lambda \to M_{g, n, \nabla}^{\trop}$ be the map induced by tropicalizing a one-paramter family, $v$ be a vertex of $\Lambda$, and $\Theta$ the combinatorial type such that $\alpha(v)\in M_{[\Theta]}$. The map $\alpha$ is \emph{harmonic} at $v$ if $\alpha$ lifts to a map to $M_{\Theta}$ locally at $v$ and the slopes $\frac{\partial \alpha}{\partial \vec e}$ along outgoing edges/legs $\vec e$ in $\Star(v)$ sum to zero. The map $\alpha$ is \emph{locally combinatorially surjective} at $v$, if the image $\alpha(\Star(v))$ intersects all strata $M_{[\Theta']}$ adjacent to $M_{[\Theta]}$. The following is Case 1 of Step 3 in the proof of \cite[Theorem 4.6]{CHT20a}:
	
	\begin{lem} \label{lem: harmonic when contracted}
	Let $h\:\Gamma_\Lambda \to N_\RR$ be the tropicalization of a family of parametrized curves $\cX \dashrightarrow S_\Delta$ with respect to $\cX^0 \to B^0$. Let $v \in \Lambda$ be a vertex corresponding to an irreducible component $\widetilde B_v$ of the reduction $\widetilde B$. Suppose that the moduli map $\chi\: B^0 \to \overline M_{g,n + |\nabla|}$ induced by the family $\cX^0 \to B^0$ contracts $\widetilde B_v$. Then the induced map $\alpha\: \Lambda \to M_{g, n, \nabla}^{\trop}$ is harmonic at $v$.
	\end{lem}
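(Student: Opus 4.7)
The plan is to verify the two defining conditions of harmonicity at $v$: (i) that $\alpha$ lifts locally to an integral affine map to $M_{\Theta}$, where $\Theta$ is the combinatorial type with $\alpha(v)\in M_{[\Theta]}$, and (ii) that the outgoing slopes along $\vec e\in\Star(v)$ sum to zero. Condition (i) is structural: the splitness of $\cX^0\to B^0$ combined with the contraction of $\widetilde B_v$ by $\chi$ forces all stable fibers over $\widetilde B_v$ to be canonically isomorphic to a fixed curve $X_0$ with dual graph $\GG_v$, so the combinatorial type admits a consistent local identification and the $\mathrm{Aut}(\Theta)$-action is trivialized locally. The main content is condition (ii), which I would verify coordinate by coordinate on $M_{\Theta}$ using Remark~\ref{rem:slopes}.

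For an edge length coordinate $\ell(\gamma)$ with $\gamma\in E(\GG_v)$, Remark~\ref{rem:slopes} identifies $\frac{\partial\ell(\gamma)}{\partial\vec e}$ with the vanishing order at the special point $s_e\in\widetilde B_v$ of the scaled reduction at $\widetilde B_v$ of the local equation $g_z$ of $\cX^0$ at the node of $\widetilde{\cX}_{s_e}$ corresponding to $\gamma$. The key observation is that, because $\chi$ contracts $\widetilde B_v$, the various $g_z$'s at different special points glue (after suitable normalization) to a single object: they are, up to units, restrictions of the pullback via $\chi$ of the smoothing parameter $t_\gamma$ of the universal node $\gamma$ in a versal deformation near $[X_0]\in\overline M_{g,n+|\nabla|}$. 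Since $\chi^*(t_\gamma)$ vanishes identically on $\widetilde B_v$, dividing by an appropriate uniform power of the uniformizer $\pi$ yields a rational function $F_\gamma$ on $\widetilde B_v$, and a computation in \'etale local coordinates shows $\ord_{s_e}(F_\gamma)=\frac{\partial\ell(\gamma)}{\partial\vec e}$. The identity $\sum_{\vec e\in\Star(v)}\frac{\partial\ell(\gamma)}{\partial\vec e}=\deg(\mathrm{div}(F_\gamma))=0$ then follows from the degree-of-divisor formula on the smooth projective curve $\widetilde B_v$.

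For a vertex image coordinate $h(u)(m)$ with $u\in V(\GG_v)$ and $m\in M$, Remark~\ref{rem:slopes} identifies $\frac{\partial h(u)(m)}{\partial\vec e}$ with the order of pole of a scaled reduction $F_u$ of $f^*(x^m)$ at the surface component $\widetilde{\cX}_u\subset\widetilde{\cX}|_{\widetilde B_v}$ along the fiber component $\widetilde{\cX}_{s_e,u}$. Since $\chi$ contracts $\widetilde B_v$, each $\widetilde{\cX}_u\to\widetilde B_v$ is, after \'etale base change $\widetilde B_v'\to\widetilde B_v$, a trivial family $C_u\times\widetilde B_v'$ with irreducible fibers, and the vertical part of $\mathrm{div}(F_u)$ is supported on multiples of entire fibers. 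Restricting $F_u$ to a generic horizontal section $\{p\}\times\widetilde B_v'$ produces a rational function on $\widetilde B_v'$ whose divisor encodes exactly the required vertical multiplicities, so their sum vanishes by the degree-of-divisor formula and pushes down to the corresponding identity on $\widetilde B_v$. The main technical obstacle in either case is the globalization step: identifying local scaled reductions with a single rational function on (a cover of) $\widetilde B_v$. This is precisely what the contraction of $\widetilde B_v$ by $\chi$ enables, since without it the relevant object would be a section of a non-trivial line bundle on $\widetilde B_v$ and the slope sum would not vanish in general.
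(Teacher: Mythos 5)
Your proposal follows essentially the same route as the paper's own sketch: lift $\alpha$ locally to $M_\Theta$ because the dual graph is constant over $\widetilde B_v$, then verify the harmonicity condition coordinate-by-coordinate via Remark~\ref{rem:slopes}. For the edge-length coordinates you identify the local equations $g_z$ as pullbacks of a single smoothing parameter from (a versal chart of) $\overline M_{g,n+|\nabla|}$, which is exactly the paper's observation that $g_z$ is regular in a neighborhood of all of $\widetilde B_v$; for the vertex-image coordinates you trivialize $\widetilde\cX_u\simeq \CC_u\times\widetilde B_v$ after an \'etale cover and restrict to a general horizontal section, again matching the paper. The argument is correct; the one spot where it glosses over a detail is the final step of the vertex-coordinate case, where you should note explicitly that the horizontal part of $\mathrm{div}(F_u)$ consists of constant sections $\{p_e\}\times\widetilde B_v$ (corresponding to edges/legs in $\Star(u)$) and hence is disjoint from a general section $\{c\}\times\widetilde B_v$, so that the restriction sees only the vertical multiplicities.
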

	
	\begin{proof}[Sketch of the proof]
    If $\chi(\widetilde B_v) = p \in \overline M_{g,n + |\nabla|}$, then every fiber of $\cX^0 \to B^0$ over $\widetilde B_v$ has the same dual graph $\GG$. This implies that the combinatorial type $\Theta$ of $h\: \Gamma_\Lambda \to \Lambda$ is locally constant around $v$, and thus $\alpha$ lifts to a map $\Star(v) \to M_{\Theta}$. As explained in \S~\ref{subsec: moduli and families of parametrized tropical curves}, we may view $M_\Theta$ naturally as a subset of $\mathbb R^{|E(\GG)| + 2 |V(\GG)|}$, and harmonicity can be checked coordinate-wise. 
    
    We first consider the $\gamma$-coordinate for an edge $\gamma \in E(\GG)$. The locus in $\overline M_{g,n + |\nabla|}$ in which the node $z$ corresponding to $\gamma$ persists, is cut out locally at $p$ by a single function. Since we can pull this function back from $\overline M_{g,n + |\nabla|}$ to $B^0$, the node $z$ is given by $xy = g_z$ in the total space of $\cX^0$, where $g_z$ is a regular function in a neighborhood of $\widetilde B_v$ in $B^0$ (and not just in a neighborhood of a closed point of $\widetilde B_v$ as it is in general). By Remark~\ref{rem:slopes}, the slopes of $\alpha$ at $v$ in the $\gamma$-coordinate are the orders of vanishing of a scaled reduction of $g_z$ at $\widetilde B_v$, and hence their sum vanishes.
    
    Let $u \in V(\GG)$ be a vertex, and consider the slopes of $\alpha$ at $v$ in the $h(u)$-coordinates. Let $\widetilde \cX_u \to \widetilde B_v$ be the surface corresponding to $u$.  Since the model is split and fibers of $\widetilde \cX_u \to \widetilde B_v$ are isomorphic, we have (after a finite base change in $\widetilde B_v$) $\widetilde \cX_u \simeq \mathcal C_u \times \widetilde B_v$ for some curve $\mathcal C_u$. Fix a general section $\{c\} \times \widetilde B_v$.  By Remark~\ref{rem:slopes}, the slopes of $h(u)(m)$ for $m\in M$ are given by the orders of pole along the $\widetilde \cX_{s,u}\simeq \{s\} \times \widetilde B_v$ of a scaled reduction of $f^*(x^m)$ at $\widetilde \cX_u\simeq \mathcal C_u \times \widetilde B_v$ for the special points $s\in \widetilde B_v$. The latter coincide with the orders of pole of the restriction of the scaled reduction to $\{c\} \times \widetilde B_v$ at the points $\{c\} \times \{s\}$, and hence again have to sum up to zero for any $m\in M$.
 	\end{proof}
	
	\begin{rem} \label{rem: harmonic when weightless trivalent}
	Suppose in the setting of Lemma~\ref{lem: harmonic when contracted} that $v \in \Lambda$ is a vertex, such that the fiber $\Gamma_v$ has weightless and $3$-valent underlying graph $\GG$. Since the stratum $M_\GG$ of curves with dual graph $\GG$ in $\overline M_{g,n + |\nabla|}$ is a single point, the lemma in this case implies that $\alpha$ is harmonic at $v$. 
	\end{rem}
	
    Recall that a stratum $M_\Theta$ is called {\em nice} if it is regular (that is, of the expected dimension), and the underlying graph of $\Theta$ is weightless and $3$-valent;  $M_\Theta$ is {\em a simple wall} if it is regular, and the underlying graph is weightless and $3$-valent except for a unique $4$-valent vertex. Note that each simple wall $M_\Theta$ is contained in the closure of exactly $3$ strata, all of them nice. They correspond to the three ways of splitting the $4$-valent vertex into two $3$-valent vertices. The following lemma is a part of \cite[Theorem 4.6]{CHT20a} and applies in particular to simple walls and nice strata:

    \begin{lem}\label{lem:alpha at simple walls and nice strata}
    Let $h\:\Gamma_\Lambda \to N_\RR$ be the tropicalization of a family of parametrized curves $\cX \dashrightarrow S_\Delta$ with respect to $\cX^0 \to B^0$. Let $v \in \Lambda$ be a vertex with $\alpha(v) \in M_{[\Theta]}$, where the underlying graph of $\Theta$ is weightless and $3$-valent except for at most one $4$-valent vertex. Then the induced map $\alpha\: \Lambda \to M_{g,n,\nabla}^{\trop}$ is either harmonic or locally combinatorally surjective at $v$.
    \end{lem}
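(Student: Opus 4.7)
The plan is to dichotomize according to whether the moduli map $\chi\: B^0 \to \overline M_{g, n + |\nabla|}$ induced by the stable model $\cX^0 \to B^0$ contracts the component $\widetilde B_v$. If $\chi$ contracts $\widetilde B_v$, then Lemma~\ref{lem: harmonic when contracted} directly yields harmonicity of $\alpha$ at $v$ and we are done. Thus the heart of the argument is to establish local combinatorial surjectivity in the non-contracted case.

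Suppose then that $\chi|_{\widetilde B_v}$ is non-constant. Since non-special points of $\widetilde B_v$ have fibers with dual graph $\mathbb G$ (the underlying graph of $\Theta$), a dense open of $\widetilde B_v$ maps into the stratum $M_{\mathbb G}\subset\overline M_{g, n+|\nabla|}$. If $\mathbb G$ were entirely weightless and $3$-valent, every component of a generic fiber would be a $\PP^1$ with three special points, forcing $M_{\mathbb G}$ to be a single point and $\chi|_{\widetilde B_v}$ to be constant — contrary to our assumption. Hence $\mathbb G$ has exactly one $4$-valent vertex $v_0$, and $M_{\mathbb G}\cong M_{0,4}$ records the cross-ratio of the four branches at $v_0$. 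Its closure inside $\overline M_{g, n+|\nabla|}$ is isomorphic to $\overline M_{0,4}\cong\PP^1$, and its three boundary points correspond precisely to the three ways of splitting $v_0$ into two trivalent vertices connected by a new edge — i.e., to the three nice types $\Theta_1,\Theta_2,\Theta_3$ adjacent to the simple wall $M_{[\Theta]}$.

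Now $\chi|_{\widetilde B_v}\:\widetilde B_v\to\overline M_{\mathbb G}\cong\PP^1$ is a non-constant morphism from a smooth proper irreducible curve, hence surjective. For each $i$, choose a preimage $p_i$ of the boundary point $q_i$. Since generic points of $\widetilde B_v$ are sent into the interior $M_{\mathbb G}$, each $p_i$ must be a special point of $\widetilde B_v$ in $B^0$, and therefore corresponds to an edge or leg $e_i\in\Star(v)$ of $\Lambda$. The stratum $M_{\mathbb G_i}$ is zero-dimensional (each newly created $3$-valent vertex gives a rigid $\PP^1$ with three marked points) and coincides with $q_i$, so the dual graph of the fiber $\widetilde\cX_{p_i}$ is exactly $\mathbb G_i$. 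Combining this with Remark~\ref{rem:slopes} (the slope along the new edge being pinned down by balancing from $\Theta$), one checks that the combinatorial type along $e_i^\circ$ is $\Theta_i$, so $\alpha(e_i^\circ)\subset M_{[\Theta_i]}$. Thus $\alpha(\Star(v))$ meets all three nice strata adjacent to the simple wall $M_{[\Theta]}$, which is local combinatorial surjectivity.

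The main obstacle is the algebro-tropical compatibility in the final step: one has to match the three algebraic boundary points of $\overline M_{\mathbb G}$ with the three tropical splittings of $v_0$, and confirm via Remark~\ref{rem:slopes} that the slopes of the tropical fiber over $e_i^\circ$ reproduced from scaled reductions along components of $\widetilde\cX_{p_i}$ coincide with the slopes of $\Theta_i$ dictated by balancing. The counting is guided by the fact, built into the definition of a simple wall, that $M_{[\Theta]}$ sits in the closure of exactly three nice strata, which matches the three boundary points of $\overline M_{0,4}$ and removes any ambiguity coming from automorphisms of $\Theta$.
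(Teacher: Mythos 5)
Your proposal follows the same route as the paper's sketched proof: dichotomize on whether the moduli map $\chi$ contracts $\widetilde B_v$, invoke Lemma~\ref{lem: harmonic when contracted} in the contracted case, and in the non-contracted case use the surjectivity of $\chi|_{\widetilde B_v}$ onto $\overline M_{\GG}\cong\overline M_{0,4}$, matching its three boundary points with the three splittings of the $4$-valent vertex. You spell out the algebro-tropical matching of slopes via Remark~\ref{rem:slopes} a bit more explicitly than the sketch, but the essential argument is identical.
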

	
	\begin{proof}[Sketch of the proof]
	Consider the moduli map $\chi\: B^0 \to \overline M_{g,n + |\nabla|}$ induced by the family $\cX^0 \to B^0$. If $\chi$ contracts $\widetilde B_v$, then $\alpha$ is harmonic at $v$ by Lemma~\ref{lem: harmonic when contracted}. Otherwise, the underlying graph $\GG$ of $\Theta$ must contain a unique $4$-valent vertex, since strata $M_{\GG}$ in $\overline M_{g,n + |\nabla|}$ corresponding to weightless, $3$-valent dual graphs $\GG$ have dimension $0$. Thus, $M_{\GG}$ is $1$-dimensional, and $\chi$ maps $\widetilde B_v$ surjectively onto the closure $\overline M_{\GG}\subset \overline M_{g,n + |\nabla|}$. Now one notes that $M_{[\Theta]}$ is contained in the closure of $3$ strata in $M_{g,n,\nabla}^{\trop}$, which correspond to the $3$-splittings of the $4$-valent vertex, and $\overline M_{\GG}$ contains as its boundary the $3$ strata parametrizing curves with the corresponding dual graphs; hence local combinatorial surjectivity follows in this case.
	\end{proof}
	
	\subsection{Flattened cycles}
	The first case we need, that was not covered in \cite{CHT20a}, is a vertex $v$ of $\Lambda$, such that the fiber over $v$ contains two $4$-valent vertices that form the endpoints of a flattened cycle. In this case, we do not get local combinatorial surjectivity of $\alpha$ at $v$, as will become clear in the proof of the next lemma; instead, we get that $\alpha$ is either harmonic at $v$ or $\alpha(\Star(v))$ intersects precisely the maximal adjacent strata in $M_{g,n,\nabla}^{\trop}$.
	
	\begin{lem} \label{lem:alpha at flattened cycle}
	Let $h\: \Gamma_\Lambda \to N_\RR$ be the tropicalization of a family of parametrized curves $\cX \dashrightarrow S_\Delta$ with respect to $\cX^0 \to B^0$. Let $v \in \Lambda$ be a vertex with $\alpha(v) \in M_{[\Theta]}$, where the underlying graph of $\Theta$ is weightless and $3$-valent, except for the two endpoints of a flattened cycle, which are $4$-valent. Assume that for some choice of coordinates, the flattened cycle satisfies the three axioms of Definition~\ref{defn:basic_floor_to_floor} for a basic floor-to-floor elevator. Then either $\alpha$ is harmonic at $v$; or for every $M_{[\Theta']}$ whose closure contains $M_{[\Theta]}$ and where the underlying graph of $\Theta'$ is $3$-valent, there is an edge/leg $e$ in $\Star(v)$ such that $\alpha(e) \cap M_{[\Theta']} \neq \emptyset$.
	\end{lem}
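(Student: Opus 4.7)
The plan is to follow the template of Lemma~\ref{lem:alpha at simple walls and nice strata}, considering the moduli map $\chi\: B^0 \to \overline{M}_{g,n+|\nabla|}$ induced by the model $\cX^0 \to B^0$, and splitting into cases according to whether $\chi$ contracts the component $\widetilde{B}_v$ of $\widetilde{B}$ corresponding to $v$. If $\chi$ does contract $\widetilde{B}_v$, then Lemma~\ref{lem: harmonic when contracted} immediately yields that $\alpha$ is harmonic at $v$, and we are done.

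In the remaining case, when $\chi$ does not contract $\widetilde{B}_v$, the image $\chi(\widetilde{B}_v)$ is a projective curve inside the closed stratum $\overline{M}_\GG \subset \overline{M}_{g,n+|\nabla|}$, where $\GG$ denotes the underlying graph of $\Theta$. Since $\GG$ is weightless and $3$-valent except at the two $4$-valent endpoints $u_1, u_2$ of the flattened cycle, the stratum $\overline{M}_\GG$ is $2$-dimensional and naturally factors as $\overline{M}_\GG \cong \overline{M}_{0,4} \times \overline{M}_{0,4}$, where the factors record the cross-ratios of the two $4$-pointed $\PP^1$-components $\widetilde{\cX}_{u_i}$ of the generic fiber of $\cX^0$ over $\widetilde{B}_v$. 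Each $3$-valent adjacent stratum $M_{[\Theta']}$ of $M_{[\Theta]}$ corresponds to a specific pair $(p_1,p_2)$ of boundary points in this product, one for each way of splitting a $4$-valent vertex into two $3$-valent ones.

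The main step is to verify that $\chi(\widetilde{B}_v)$ meets every such corner $(p_1,p_2)$. My plan is to use the basic floor-to-floor elevator hypothesis together with Proposition~\ref{prop:weight one midpoint}, applied to the fibers over edges/legs $e \in \Star(v)$ at which only one of the two $4$-valent vertices has been split. Such a partial smoothing still contains a flattened cycle or elliptic-component structure to which the midpoint symmetry of Proposition~\ref{prop:weight one midpoint} applies, and this symmetry propagates through the family to constrain how the two projections $\pi_i \circ \chi \: \widetilde{B}_v \to \overline{M}_{0,4}$ can behave. Concretely, the constraint is used to rule out the ``mixed'' case in which exactly one of the two projections is constant, and to force that whenever both projections are non-constant, every combination of the three boundary specializations at each factor is realized at some special point of $\widetilde{B}_v$.

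The principal obstacle is this last step, since a priori a projective curve inside $\overline{M}_{0,4} \times \overline{M}_{0,4}$ need not meet every one of the nine boundary corners: already the diagonal is a counterexample in the absence of extra structure. Thus the proof must invoke a genuinely tropical realizability constraint, rather than only algebraic moduli geometry, to exclude diagonal-type images; this is precisely where the basic floor-to-floor elevator hypothesis and Proposition~\ref{prop:weight one midpoint} enter in an essential way.
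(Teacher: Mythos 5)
Your framework is right: split on whether $\chi$ contracts $\widetilde B_v$ (Lemma~\ref{lem: harmonic when contracted} handles the contracted case), and otherwise study $\chi(\widetilde B_v)$ inside $\overline M_{\GG}\cong\overline M_{0,4}\times\overline M_{0,4}$ via the projections $\chi_u,\chi_{u'}$, with Proposition~\ref{prop:weight one midpoint} supplying the extra realizability constraint. But you then misidentify the goal, and this is a genuine gap. You aim to show that $\chi(\widetilde B_v)$ hits all nine corners of $\overline M_{0,4}^2$, rightly observe that an arbitrary projective curve (e.g., a diagonal) need not do so, and propose to use the tropical constraints to exclude diagonal-type images so that all nine corners are met. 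This is backwards: there are not nine adjacent $3$-valent strata $M_{[\Theta']}$; there are exactly three, and they lie precisely on a diagonal-type sublocus. The cure you try to rule out is the correct picture.

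Concretely, a $3$-valent $\Theta'$ with $M_{[\Theta]}\subset\overline M_{[\Theta']}$ is obtained by splitting both $u$ and $u'$, and the split of $u'$ is completely determined by the split of $u$. For the two splits of $u$ that separate the two $O$-edges $\gamma_1,\gamma_2$, the new edge $\gamma$ lies on the cycle and has non-zero $x$-slope by balancing; for the cycle to close in $N_\RR$ with positive edge lengths, $u'$ must also split (and in the unique compatible way), since $\gamma'$ must contribute an opposite horizontal displacement. For the remaining split of $u$, which keeps $\gamma_1,\gamma_2$ together, the same horizontal-closing argument forbids $u'$ from splitting by separating $\gamma_1',\gamma_2'$; and Proposition~\ref{prop:weight one midpoint} shows that leaving $u'$ unsplit produces a flattened cycle sitting off-center in its elevator, which is not realizable and so cannot arise over any $e\in\Star(v)$. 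Hence $u'$ is forced to split symmetrically. In total, the adjacent $3$-valent strata are in bijection with the three boundary points of the \emph{single} factor $\overline M_{0,4}$ corresponding to $u$, and surjectivity of one projection $\chi_u$ (which holds without loss of generality once $\chi$ is non-constant; no ``mixed case'' needs to be ruled out) already produces, for each such $\Theta'$, an edge/leg $e\in\Star(v)$ with $\alpha(e)\cap M_{[\Theta']}\neq\emptyset$. This is the content of the paper's proof, and it is what your argument is missing.
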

	
	\begin{proof}
     Let $\chi\: B^0 \to \overline M_{g, n + |\nabla|}$ be the moduli map induced by the family $\cX^0 \to B^0$. If $\chi$ contracts $\widetilde B_v$, then $\alpha$ is harmonic at $v$ by Lemma~\ref{lem: harmonic when contracted}. Thus, we assume $\chi$ does not contract $\widetilde B_v$.
    Let $u, u'$ be the endpoints of the flattened cycle $E$ in $\Theta$; that is, $h(E)$ is the line segment between $h(u)$ and $h(u')$.
    Any combinatorial type $\Theta'$ as in the claim of the lemma is obtained by splitting $u$ and $u'$ each into two $3$-valent vertices.

    Since $\cX^0 \to B^0$ is split, there are irreducible components $\widetilde \cX_u \to \widetilde B_v$ and $\widetilde \cX_{u'} \to \widetilde B_v$ of $\widetilde \cX$ that restrict over non-special points of $\widetilde B_v$ to the irreducible component of the fiber corresponding to $u$ and $u'$, respectively. Furthermore, both $\widetilde \cX_u \to \widetilde B_v$ and $\widetilde \cX_{u'} \to \widetilde B_v$ have $4$ marked points, corresponding to the $4$ adjacent edges/legs to $u$ and $u'$, respectively. Thus, we get maps $\chi_u\: \widetilde B_v \to \overline M_{0,4}$ and $\chi_{u'}\: \widetilde B_v \to \overline M_{0,4}$ induced by the two families. Since $\chi$ does not contract $\widetilde B_v$ and a non-special fiber of $\cX^0 \to B^0$ over $\widetilde B_v$ has moduli only at the components corresponding to $u$ and $u'$, at least one of the maps $\chi_u$ and $\chi_{u'}$ is non-constant, and hence surjective. We assume this to be the case for $\chi_u$.

    Surjectivity of $\chi_u$ implies that there is an edge or leg $e\in\Star(v)$ such that $\alpha(e) \cap M_{[\Theta'']} \neq \emptyset$ for any $\Theta''$ obtained by splitting $u$: each splitting of $u$ corresponds to a boundary point of $\overline M_{0,4}$. Let $\gamma_1, \gamma_2$ be the two edges adjacent to $u$ that are contained in $E$ and $\gamma_1', \gamma_2'$ the two edges adjacent to $u'$ contained in $E$ (possibly identical with the $\gamma_i$); see Figure \ref{fig:split}.
    If $\gamma_1$ and $\gamma_2$ are not adjacent to the same vertex of the underlying graph of $\Theta''$, also $u'$ has to split into two $3$-valent vertices in $\Theta''$ by balancing, and the claim of the lemma follows for such $\Theta''$.

\tikzset{every picture/.style={line width=0.75pt}}
\begin{figure}[ht]	
\begin{tikzpicture}[x=0.45pt,y=0.45pt,yscale=-0.9,xscale=0.9]
\import{./}{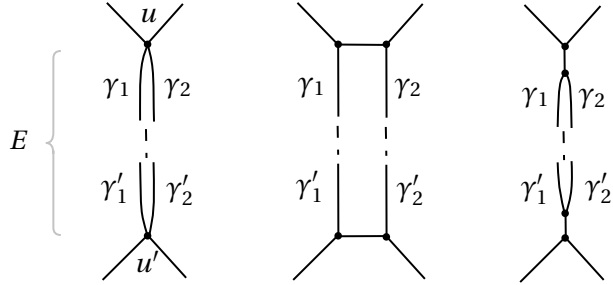}
\end{tikzpicture}		
\caption{On the left, the flattened cycle in a curve of combinatorial type $\Theta$. In the middle, a splitting of $u$ in which $\gamma_1$ and $\gamma_2$ are not adjacent to the same vertex. On the right, the splitting in which they remain adjacent.}
\label{fig:split}
\end{figure}

    Thus, it remains to show the following. If $\gamma_1, \gamma_2$ are adjacent to the same vertex in the underlying graph of $\Theta''$, then the vertex $u'$ necessarily splits in $\Theta''$. If not, then the curves of type $\Theta''$ are not realizable by Proposition~\ref{prop:weight one midpoint}, and therefore can not appear in the tropicalization of a family of parametrized curves.
    \end{proof}

    \subsection{Elliptic components and contracted elliptic tails.}
	We next describe, how to develop a loop or a contracted edge from a $2$-valent, weight $1$ vertex $u$ in the tropicalization $\Gamma_\Lambda \to N_{\RR}$ of a family of parametrized curves. If $\Theta$ is the combinatorial type containing such a vertex $u$, we will consider the following combinatorial types obtained from $\Theta$ (see also Definition~\ref{defn:flattened cycle} and Figure~\ref{fig:flattenedcycle}): let $\Theta'$ be obtained from $\Theta$ by developing a loop based at $u$, replacing $u$ with a weightless vertex $u'$; let $\Theta''$ be obtained from $\Theta$ by developing a contracted edge adjacent to a $1$-valent vertex $u''$ of weight $1$; and let $\Theta'''$ be obtained from $\Theta''$ by developing a loop at $u''$, replacing $u''$ with a weightless vertex.
	
	\begin{lem}\label{lem:alpha at weight one vertex}
	Let $\Gamma_\Lambda \to N_\RR$ be the tropicalization of a family of parametrized curves $\cX \dashrightarrow S_\Delta$ with respect to $\cX^0 \to B^0$. Let $v \in \Lambda$ be a vertex with $\alpha(v) \in M_{[\Theta]}$, where $\Theta$ is $3$-valent and weightless, except for one $2$-valent vertex $u$ of weight $1$. 
	Then, with $\Theta', \Theta''$ and $\Theta'''$ as above the following holds.
	\begin{enumerate}
	    \item Suppose $\mathrm{char}(\widetilde K)$ does not divide the multiplicity of $h$ along the two edges adjacent to $u$. Then $\alpha$ is harmonic at $v$, or there is an edge/leg $e$ in $\Star(v)$ such that $\alpha(e^\circ) \subset M_{[\Theta']}$. 
	    \item Suppose the multiplicity of $h$ along the two edges adjacent to $u$ is a power of $\mathrm{char}(\widetilde K)$. Then $\alpha$ is harmonic at $v$, or there is an edge/leg $e$ in $\Star(v)$ such that $\alpha(e^\circ) \subset M_{[\Theta'']}$.
	    \item  Let $w \in \Lambda$ be a vertex. If $\alpha(w) \in M_{[\Theta']}$ or $\alpha(w) \in M_{[\Theta''']}$, then $\alpha$ is harmonic at $w$. If $\alpha(w) \in M_{[\Theta'']}$, then $\alpha$ is harmonic at $w$, or there is an edge/leg $e$ in $\Star(w)$ such that $\alpha(e^\circ) \subset M_{[\Theta''']}$.
	\end{enumerate}
	\end{lem}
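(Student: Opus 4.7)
The plan is to follow the template of Lemmas~\ref{lem: harmonic when contracted} and \ref{lem:alpha at simple walls and nice strata}, analyzing cases based on whether the moduli map $\chi\: B^0 \to \overline{M}_{g,n+|\nabla|}$ induced by the stable model $\cX^0 \to B^0$ contracts the relevant reduction component. If $\chi$ contracts $\widetilde B_v$ (respectively $\widetilde B_w$), harmonicity is immediate from Lemma~\ref{lem: harmonic when contracted}. The heart of the proof is thus the complementary case, where $\chi$ does not contract.

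For parts (1) and (2), every vertex of $\Theta$ other than $u$ is weightless and $3$-valent, so the corresponding fiber components are rigid as pointed curves, and the variation of moduli along $\widetilde B_v$ must come from the elliptic component $\widetilde\cX_u$ equipped with its two sections $p_1,p_2$ coming from the edges adjacent to $u$. Balancing forces both edges at $u$ to share a common multiplicity $\m$, and a computation using Remark~\ref{rem:slopes} shows that for a suitable $m\in M$ the restriction of $f^*(x^m)$ to $\widetilde\cX_u$ has divisor $\m(p_1-p_2)$; thus $p_1-p_2$ is $\m$-torsion in $\mathrm{Pic}^0(\widetilde\cX_u)$, and the non-constant moduli map $\widetilde B_v\to\overline{M}_{1,2}$ factors through a modular curve $\mathscr M_\m$ classifying (generalized) elliptic curves with an $\m$-torsion point. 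The adjacent combinatorial types are then determined by the cusps of $\mathscr M_\m$.

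The key characteristic-dependent step, and the main obstacle in the proof, is the analysis of these cusps. In case~(1), $\mathrm{char}(\widetilde K)\nmid\m$, the cusps parametrize N\'eron polygons carrying a non-trivial $\m$-torsion root of unity in the smooth multiplicative locus; tropically, after stabilization, the nodal cycle becomes the single-loop type $\Theta'$. In case~(2), $\m=p^k$ with $p=\mathrm{char}(\widetilde K)$, the group scheme $\mu_{p^k}$ is infinitesimal, so no such multiplicative cusps exist; the only admissible boundary behavior keeps the elliptic component intact while forcing $p_1,p_2$ to collide, which in stable reduction produces a rational tail carrying both points, i.e., the contracted elliptic tail of type $\Theta''$. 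The dichotomy between these two cuspidal pictures is precisely what explains the sharp characteristic condition in the Main Theorem, and controlling it requires careful bookkeeping in the Deligne--Rapoport compactification of the modular curve.

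Part (3) is handled by the same method applied component-by-component. For $\Theta'$ and $\Theta'''$, the weight content of $\Theta$ is replaced by a loop at a $4$-valent weightless vertex, and the corresponding nodal rational component has moduli $\overline{M}_{0,4}$ (respectively, no moduli at all, since it has only three special points on its normalization). The non-constant moduli map $\widetilde B_w\to\overline{M}_{0,4}$ gives a rational function whose slope-sum at the special points of $\widetilde B_w$ vanishes by the degree formula, yielding harmonicity in both subcases. For $\Theta''$, the $1$-valent weight-$1$ vertex $v_2$ corresponds to an elliptic curve with a single marked point (no torsion constraint), and the unique cusp $j=\infty$ of $\overline{M}_{1,1}$ produces a nodal-cubic degeneration; tropically this develops a loop at $v_2$ and yields $\Theta'''$.
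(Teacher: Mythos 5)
Your framework (dichotomy on whether $\chi$ contracts $\widetilde B_\bullet$, passage to $\widetilde\cX_u$ with two sections, divisor computation $\mathrm{div}=\kappa(p_1-p_2)$, factoring through a Deligne--Rapoport modular curve) matches the paper closely for part (1), but there are two genuine gaps in the rest.

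First, in part (2) your mechanism is wrong. You assert that since $\mu_{p^k}$ is infinitesimal ``no such multiplicative cusps exist'' and that ``the only admissible boundary behavior'' forces $p_1,p_2$ to collide. This is not accurate: $X_1(p^k)$ in characteristic $p$ does have cusps (the whole point of Drinfeld level structures is that they make sense on N\'eron polygons even when the group scheme is not \'etale), so you cannot conclude anything by ruling cusps out. The paper's actual argument is entirely different: it shows the moduli map $\chi_{u,\kappa}\colon\widetilde B_v\dashrightarrow Y_1(\kappa)$ dominates an irreducible component, observes that the forgetful map of any such component to $M_{1,1}$ is surjective and hence hits the supersingular locus, and then uses that on a supersingular curve the only point of order $p^l$ is the origin to force $\sigma_1(s)=\sigma_2(s)$. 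This collision at an \emph{interior} (supersingular) point of the modular curve --- not at a cusp --- is what produces the rational tail of $\Theta''$. Your proposal never invokes supersingularity and so does not actually establish the existence of the required fiber.

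Second, in part (3) your treatment of $\Theta'$ does not prove harmonicity. You write ``the non-constant moduli map $\widetilde B_w\to\overline M_{0,4}$ gives a rational function whose slope-sum vanishes by the degree formula.'' But $\Theta'$ has a unique $4$-valent vertex and is therefore a simple wall; at a simple wall, Lemma~\ref{lem:alpha at simple walls and nice strata} only gives the dichotomy ``harmonic or locally combinatorially surjective,'' and a non-constant moduli map would land you in the second branch, not in harmonicity. The slope-sum vanishing in Lemma~\ref{lem: harmonic when contracted} relies precisely on $\chi$ contracting the base component (so that the functions $g_z$ cutting out nodes are globally defined on a neighborhood of $\widetilde B_w$); if $\chi$ is non-constant this argument does not apply. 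The paper instead shows the moduli map is \emph{constant}: arguing as in part (1), the second marked point on the nodal rational component (after fixing $0,\infty,1$) is forced to be a $\kappa$-th root of unity, a rigid condition, so all fibers of $\widetilde\cX_{u'}\to\widetilde B_w$ are isomorphic and $\chi$ contracts $\widetilde B_w$. Establishing this constancy --- which your proposal skips --- is the crux of the $\Theta'$ case. (Your treatment of $\Theta''$ is closer to the paper's, but you should also record the branch where $\chi_{u''}$ is constant, which yields harmonicity rather than a degeneration to $\Theta'''$.)
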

	
	\begin{proof}
	As in the proof of Lemma~\ref{lem:alpha at flattened cycle}, let $\widetilde \cX_u$ be the irreducible component of $\widetilde \cX$ corresponding to $u$. Let $\sigma_1$ and $\sigma_2$ be the two sections of $\widetilde \cX_u \to \widetilde B_v$ corresponding to the two edges $\gamma_1$ and $\gamma_2$ adjacent to $u$ (as before, they are sections since $\cX^0 \to B^0$ is split). Let $\vec \gamma_i$ be the orientation of $\gamma_i$ away from $u$; then  by balancing we have $\frac{\partial h}{\partial \vec \gamma_1} = - \frac{\partial h}{\partial \vec \gamma_{2}}$.
	Let $n \in N$ be a primitive vector for the line spanned by $\frac{\partial h}{\partial \vec \gamma_1}$; that is, $\frac{\partial h}{\partial \vec \gamma_i}$ is $\pm n$ times the multiplicity $\kappa \coloneqq \m(h,\gamma_i)$ of $h$ along the $\gamma_i$. Let $m \in M$ be a primitve lattice element such that $(m,n) = 0$, where $(\cdot,\cdot)$ is the pairing $M \times N \to \ZZ$ defining $M$ and $N$ as dual lattices.
	
	We consider the rational function  
	\[
	G \coloneqq \left(\lambda_{m, u} f^*(x^m)\right)|_{\widetilde \cX_u}
	\]
	on $\widetilde \cX_u$, where $\lambda_{m,u} \in K$ is chosen such that $G$ is regular and invertible at the generic point of $\widetilde \cX_u$. Over non-special points $s \in \widetilde B_v$, $G$ restricts to a rational function $G|_{\widetilde \cX_{s, u}}$ on the fiber $\widetilde \cX_{s, u}$. We have that $G|_{\widetilde \cX_{s, u}}$ is regular and invertible away from the image of $s$ under the two sections $\sigma_1$ and $\sigma_2$. More precisely, we get
	\[
	\mathrm{div}(G|_{\widetilde \cX_{s, u}}) = \kappa \sigma_1(s) - \kappa \sigma_2(s).
	\]
	Viewing $\widetilde \cX_{s,u}$ as an elliptic curve with identity $\sigma_2(s)$, we thus get that $\sigma_1(s)$ is a point of order dividing $\kappa$. Possibly replacing $\kappa$ by a number that divides it, we may assume that $\sigma_1(s)$ has order $\kappa$. Note that this replacement preserves the divisibility condition -- $\mathrm{char}(\widetilde K) \not | \, \kappa$ or $\kappa = \mathrm{char}(\widetilde K)^l$ -- in the assumptions of the lemma. Since $\sigma_2(s) \neq \sigma_1(s)$ and $u$ is of weight $1$, we must have $\kappa > 1$. Furthermore, this order is constant along smooth, non-supersingular fibers of $\widetilde \cX_u \to \widetilde B_v$.
	Let $\chi\: B^0 \to \overline M_{g, n + |\nabla|}$ as before be the moduli map induced by the family $\cX^0 \to B^0$. If $\chi$ contracts $\widetilde B_v$, then $\alpha$ is harmonic at $v$ by Lemma~\ref{lem: harmonic when contracted}. Thus we assume $\chi$ does not contract $\widetilde B_v$. Since the underlying graph of $\Theta$ is $3$-valent and weightless except for $u$, this implies that the family $\widetilde \cX_u \to \widetilde B_v$ contains non-isomorphic fibers.
	
	For the first claim assume $\mathrm{char}(\widetilde K) \not | \, \kappa$. We will show that in this case $\widetilde \cX_u \to \widetilde B_v$ necessarily contains a singular, irreducible fiber, which immediately implies that there is an $e \in \Star(v)$ satisfying the assertion of the first claim.
	Let $X_1(\kappa) \to \Spec(\widetilde K)$ be the compactification of the modular curve $Y_1(\kappa)$ as constructed by Deligne and Rapoport \cite{DR72}, where $Y_1(\kappa)$ parametrizes elliptic curves together with a point of exact order $\kappa$. Let $\mathcal C_1(\kappa) \to X_1(\kappa)$ be the universal curve. The fibers of $\mathcal C_1(\kappa)$ over boundary points of $X_1(\kappa)$ are N\'eron polygons; they are semistable curves with dual graph a weightless cycle with an additional group structure. In the case of $X_1(\kappa)$, the N\'eron polygons that appear as fibers of $\mathcal C_1(\kappa)$ are precisely the ones for which the length of the dual graph divides $\kappa$ (this is an immediate consequence of \cite[Construction IV.4.14, p. 224]{DR72}). In particular, there are fibers of $\mathcal C_1(\kappa)$ that are N\'eron polygons of length $1$, or in other words, $\mathcal C_1(\kappa)$ contains a singular irreducible fiber.
	
	By what we showed above, $\widetilde \cX_u \to \widetilde B_v$ induces a rational moduli map $\chi_{u, \kappa}\: \widetilde B_v \dashrightarrow X_1(\kappa)$, defined at points where the fibers of $\widetilde \cX_u \to \widetilde B_v$ are smooth. Furthermore, over this locus $\widetilde \cX_u \to \widetilde B_v$ is the pullback of $\mathcal C_1(\kappa)$. Recall next that because $\mathrm{char}(\widetilde K) \not | \, \kappa$, $X_1(\kappa)$ is irreducible by \cite[Corollaire IV.5.6, p. 227]{DR72}. Since $\widetilde \cX_u \to \widetilde B_v$ contains non-isomorphic fibers and $X_1(\kappa)$ has dimension $1$, this implies that the moduli map $\chi_{u, \kappa}$ is dominant. The curve $\widetilde B_v$ is smooth and $X_1(\kappa)$ is proper. Thus, after replacing $\widetilde B_v$ with a finite covering, the map $\chi_{u, \kappa}$ extends to the whole curve. To ease the notation, we will assume that $\chi_{u, \kappa}$ is defined on all of $\widetilde B_v$. 
	
	Let $b\in \widetilde B_v$ be any point such that the fiber $C_b$ of $\mathcal C_1(\kappa) \to X_1(\kappa)$ over $\chi_{u, \kappa}(b)$ is singular and irreducible. Since $\mathcal C_1(\kappa) \to X_1(\kappa)$ is a family of stable curves over a neighborhood of $\chi_{u, \kappa}(b)$, so is its pullback to $\widetilde B_v$. But the family $\widetilde \cX_u \to \widetilde B_v$ is also stable, and coincides with the pullback over a dense open subset of $\widetilde B_v$. Therefore, its fiber over $b$ is $C_b$ by the uniqueness of the stable model. Hence, the family $\widetilde \cX_u \to \widetilde B_v$ contains irreducible singular fibers as claimed. 
	
	For the second claim, assume now that $\kappa = \mathrm{char}(\widetilde K)^l$. As before, we obtain a rational map $\chi_{u, \kappa}\: \widetilde B_v \dashrightarrow Y_1(\kappa)$, defined at points where the fibers of $\widetilde \cX_u \to \widetilde B_v$ are smooth; it dominates an irreducible component of $Y_1(\kappa)$. Since for any irreducible component of $Y_1(\kappa)$ the forgetful morphism to $M_{1,1}$ is surjective, there is in particular $s \in \widetilde B_v$ such that $\chi_{u, \kappa}(s)$ parametrizes a supersingular elliptic curve. Since $\kappa = \mathrm{char}(\widetilde K)^l$, the only point of order $\kappa$ on such a supersingular curve is the origin. Thus the images of the two points $\sigma_1(s)$ and $\sigma_2(s)$ in the universal family over $Y_1(\kappa)$ need to coincide. Since $\widetilde \cX_u \to \widetilde B_v$ is a family of stable curves and the $\sigma_i$ sections, we thus get that the fiber of $\widetilde \cX_u \to \widetilde B_v$ over $s$ has two irreducible components: a rational curve containing $\sigma_1(s)$ and $\sigma_2(s)$, which is attached to an elliptic curve along a single node. This is clearly equivalent to the second claim. 

    For the third claim, note first that the underlying graph of $\Theta'''$ is weightless and $3$-valent; thus if $\alpha(w) \in M_{[\Theta''']}$, then $\alpha$ is harmonic at $w$ by Remark~\ref{rem: harmonic when weightless trivalent}. We next consider $\Theta'$. Recall that $u'$ denotes the $4$-valent vertex of the underlying graph of $\Theta'$ that is adjacent to the loop, and let $w \in \Lambda$ be a vertex such that $\alpha(w) \in M_{[\Theta']}$. As before, let $\widetilde \cX_{u'} \to \widetilde B_w$ be the irreducible component of $\widetilde \cX \to \widetilde B$ corresponding to $u'$. Up to an isomorphism, we may assume that the general fiber of $\widetilde \cX_{u'} \to \widetilde B_w$ is $\PP^1$ with $0$ and $\infty$ identified, and one of its two marked points is $1$. Arguing as for the first claim, we get that the second marked point needs to be a $\kappa$-th root of unity in $\PP^1$. In particular, all fibers of $\widetilde \cX_{u'} \to \widetilde B_w$ are isomorphic. Since general fibers of $(\widetilde \cX)|_{\widetilde B_w}$ have by assumption moduli only at $\widetilde \cX_{u'}$, this implies that the global moduli map $\chi\: B^0 \to \overline M_{g, n + |\nabla|}$ contracts $\widetilde B_w$. Thus $\alpha$ is harmonic at $w$ by Lemma~\ref{lem: harmonic when contracted}. 
    Finally, we consider $\Theta''$. Recall that $u''$ denotes the vertex of weight and valence $1$. As before, $u''$ corresponds to a family $\widetilde \cX_{u''} \to \widetilde B_w$ of elliptic curves and induces a moduli map $\chi_{u''}\: \widetilde B_w \to \overline M_{1,1}$. If this map has constant image, the global moduli map $\chi\: B^0 \to \overline M_{g, n + |\nabla|}$ contracts $\widetilde B_w$, since general fibers of $(\widetilde \cX)|_{\widetilde B_w}$ have by assumption moduli only at $\widetilde \cX_{u''}$. In this case $\alpha$ is harmonic at $w$ by Lemma~\ref{lem: harmonic when contracted}. Otherwise, $\chi_{u''}$ is dominant, and the closure of it's image contains the boundary point of $\overline M_{1,1}$. Again by uniqueness of the stable model, this implies that the family $\widetilde \cX_{u''} \to \widetilde B_w$ contains a singular, irreducible fiber, and the claim follows.
 	\end{proof}

\section{Degeneration of curves on polarized toric surfaces}\label{sec:degeneration}
In this section, we prove the main result, Theorem~\ref{thm:main thm}. 
We work over an arbitrary algebraically closed field $K$ and consider $h$-transverse polygons $\Delta$. 
For a line bundle $\mathscr L$ on $S_\Delta$, recall
that the \emph{Severi varieties} $V_{g,\mathscr L}$ and $V_{g,\CL}^{\irr}$ are the following loci of the linear system $|\mathscr L|$:
		\[
	V_{g,\CL}= \left\{[C] \in |\mathscr L|\, | \, C \text{ is reduced, torically transverse, and }\, \pg(C) = g \right\},
		\]
where $\pg(C)$ denotes the geometric genus of $C$, and
{\em torically transverse} means that $C$ contains no zero-dimensional orbits of the torus action. Similarly,
	\[
	V_{g,\CL}^{\irr} = \left\{[C] \in V_{g,\mathscr L} | \, C \text{ is irreducible}\right\}.
		\]
When $\CL=\CL_\Delta$, we will write $V_{g,\Delta}:=V_{g,\mathscr L_\Delta}$ and $V^\irr_{g,\Delta}:=V^\irr_{g,\mathscr L_\Delta}$.

\begin{rem}
The definition of Severi varieties given here is slightly more restrictive than the one in \cite[Definition 2.1]{CHT20a} since we require the curves to avoid {\em all}  zero-dimensional orbits in $S_\Delta$. This restriction is necessary to obtain well-behaved tangency profiles, which will be introduced below. One can check that the closures of the loci defined in the two different ways coincide, thus the modification is harmless for our purposes. 
\end{rem}

\subsection{Tangency conditions}
In Theorem~\ref{thm:main thm} we will allow for some prescribed tangencies with the toric boundary of $S_\Delta$, and we establish the setup necessary for this statement next. Denote by $D_1,\dotsc,D_s\subset S_\Delta$ for $i=1,2,\dotsc,s$ the closures of the codimension-one orbits in $S_\Delta$ corresponding to the sides $\partial \Delta_i$ of $\Delta$. A \emph{tangency profile of} $(S_\Delta, \mathscr L)$ is a collection of multisets 
\[
\underline d:=\left(\left\{d_{i,j}\right\}_{1\leq j\leq a_i}\right)_{1\leq i\leq s}
\]
with $a_i \in \ZZ_{\geq 0}$ and  $d_{i, j} \in \ZZ_{>0}$ such that $D_i \cdot \mathscr L = \sum_{1\leq j\leq a_i}d_{i,j}$.
We say that a pa\-ra\-me\-tri\-zed curve $f\: X \to S_\Delta$ with $f(X)$ torically transverse has tangency profile $\underline d$, if for all $i$ we have $f^*(D_i)= \Sigma_{1\leq j\leq a_i} d_{i, j} p_{i, j}$ as an effective divisor on $X$, where $p_{i,j}$ are distinct points.

\begin{defn}
For a tangency profile $\underline d$ of $(S_\Delta, \mathscr L)$, we define $V_{g,\underline d,\mathscr L}$ (respectively, $V^\irr_{g,\underline d,\mathscr L}$) to be the locus of $V_{g,\mathscr L}$ (respectively, $V^\irr_{g,\mathscr L}$) consisting of curves $C$ such that the normalization $C^\nu\rightarrow C$ induces a parametrized curve $f\colon C^\nu\rightarrow S_\Delta$ with tangency profile $\underline d$.
\end{defn}

As before, we set $V_{g,\underline d,\Delta}:=V_{g,\underline d, \mathscr L_\Delta}$ and $V^\irr_{g,\underline d, \Delta}:=V^\irr_{g,\underline d,\mathscr L_\Delta}$ for the line bundle $\mathscr L_\Delta$ associated to $\Delta$. Notice that in this case $D_i \cdot \mathscr L = \sum_{1\leq j\leq a_i}d_{i,j}$ is the integral length of $\partial \Delta_i$.
We say that a tangency profile $\underline d$ is \textit{trivial} on $D_i$ if $d_{i,j}=1$ for all $1\leq j\leq a_i$. If a tangency profile is trivial on all orbits $D_i$ we simply call it trivial, and denote it by $\underline d_0$; that is, $\underline d_0$ is the tangency profile for $(S_\Delta, \mathscr L)$ with $a_i = D_i \cdot \mathscr L$ and $d_{i,j} = 1$ for all $i, j$.

\begin{rem}\label{rem:generalized severi}
If $S_\Delta = \PP^2$ and $\underline d$ is trivial on two of the codimension-one  orbits, then both $V_{g,\underline d,\Delta}$ and $V^\irr_{g,\underline d, \Delta}$ are \textit{generalized Severi varieties} in the sense of \cite{caporaso1998counting}. On the other hand, in \emph{loc. cit}. the authors also discuss the loci obtained by fixing the point of given tangency with the toric boundary, which we do not consider here.
\end{rem}

\begin{rem}
Clearly we have $V^\irr_{g,\underline d_0,\mathscr L}\subset V^\irr_{g,\mathscr L}$. On the other hand, $V^\irr_{g,\mathscr L}\subset \overline V^\irr_{g, \underline d_0, \mathscr L}$ by \cite[Proposition 2.7]{CHT20a}.
Therefore $\overline V^\irr_{g,\mathscr L} = \overline V^\irr_{g,\underline d_0,\mathscr L}$ and we are free to switch between $V^\irr_{g,\underline d_0,\mathscr L}$ and $ V^\irr_{g,\mathscr L}$ in the study of degenerations of irreducible curves in $S_\Delta$.
\end{rem}
 
We write $|\underline d| :=\sum_ia_i$; that is, $|\underline d|$ denotes the number of distinct points in the preimage under $f$ of the boundary divisor of $S_\Delta$ for any parametrized curve $f \: X \to S_\Delta$ with tangency profile $\underline d$. 

\begin{prop}\label{prop:tangency}
Let $\underline d$ be a tangency profile of $(S_\Delta, \mathscr L)$. Then $V_{g,\underline d,\mathscr L}$ and $V^\irr_{g,\underline d,\mathscr L}$ are either empty or constructible subsets of $|\mathscr L|$ of pure dimension $|\underline d|+g-1$.
\end{prop}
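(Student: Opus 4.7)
The plan is to parametrize both $V_{g,\underline{d},\mathscr{L}}$ and $V^\irr_{g,\underline{d},\mathscr{L}}$ via a moduli space of parametrized curves carrying the tangency data, and to reduce the dimension assertion to the trivial-tangency case established in the predecessor paper.

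For constructibility, let $\mathcal{M}_{g,\underline{d}}$ denote the moduli space of tuples $(X, \sigma_\bullet, f)$, where $X$ is a smooth projective curve of genus $g$, $\sigma_\bullet = (p_{i,j})$ is a collection of $|\underline{d}|$ marked points, and $f\colon X \to S_\Delta$ is a parametrized curve with $[f_\ast X] \in |\mathscr{L}|$ satisfying $f^\ast D_i = \sum_j d_{i,j}\, p_{i,j}$. As a locally closed subscheme of a relative Hilbert scheme (or an appropriate Kontsevich moduli space of parametrized curves), $\mathcal{M}_{g,\underline{d}}$ is of finite type. The forgetful morphism $\pi\colon \mathcal{M}_{g,\underline{d}} \to |\mathscr{L}|$ sending $(X,\sigma_\bullet,f)$ to $[f(X)]$ is algebraic. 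Restricted to the open subscheme where $f$ is birational onto a reduced, torically transverse curve, its image equals $V_{g,\underline{d},\mathscr{L}}$, which is therefore constructible by Chevalley's theorem. The locus $V^\irr_{g,\underline{d},\mathscr{L}}$ is further cut out by the open condition that $X$ (equivalently $f(X)$) is connected, and is likewise constructible.

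For the dimension count, I would reduce to the trivial-tangency case. By \cite[Proposition 2.7]{CHT20a}, together with the preceding Remark, $V_{g,\underline{d}_0,\mathscr{L}}$ has pure dimension $|\underline{d}_0| + g - 1$. For a general $\underline{d}$, the stratum $V_{g,\underline{d},\mathscr{L}}$ sits inside the closure of $V_{g,\underline{d}_0,\mathscr{L}}$ as the locus where the $|\underline{d}_0|$ transverse intersections with the toric boundary collide into groups of sizes $d_{i,j}$. A collision of $k$ transverse intersection points imposes $k - 1$ independent linear conditions on the parametrizing family, and summing these gives expected codimension $\sum_{i,j}(d_{i,j} - 1) = |\underline{d}_0| - |\underline{d}|$, hence expected pure dimension $|\underline{d}| + g - 1$. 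The corresponding statement for $V^\irr_{g,\underline{d},\mathscr{L}}$ then follows by passing to an open subset.

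The main obstacle is to verify that this codimension is attained with equality on every irreducible component, and not merely as an upper bound. Equivalently, one needs unobstructedness of the deformation theory of parametrized curves with tangency profile $\underline{d}$ at a general point of each component, i.e.\ the vanishing $H^1(X, N_f(-R)) = 0$ for $R = \sum_{i,j}(d_{i,j} - 1)\, p_{i,j}$, where $N_f$ denotes the (possibly generalised) normal sheaf of $f$. This can be handled by combining a direct Riemann-Roch computation of $\chi(N_f(-R))$, which produces the expected dimension after quotienting by reparametrization symmetries of $(X,\sigma_\bullet)$, with the toric decomposition of $N_f$ near the boundary points $p_{i,j}$: the toric transversality of $f(X)$ gives a local splitting that allows one to check the vanishing of the obstruction space stalk-by-stalk at the $p_{i,j}$.
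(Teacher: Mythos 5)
Your constructibility argument via a moduli space of parametrized curves and Chevalley's theorem is sound and close in spirit to the paper's (which pulls the boundary divisor back to a section $\psi$ of a relative Hilbert scheme of points on the normalized universal family, and intersects with the locally closed locus of subschemes of the prescribed shape). Your lower bound on dimension --- that collisions of transverse boundary points impose at most $\sum_{i,j}(d_{i,j}-1)$ conditions --- is likewise essentially the paper's lower-bound argument, phrased without the Hilbert-scheme scaffolding.

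The gap is in the upper bound. You propose to establish $H^1(X, N_f(-R)) = 0$ and invoke unobstructedness, but this is both circular and insufficient in general. The degree computation does force $H^1(X, N_f(-R)) = 0$ by Serre duality \emph{when $f$ is an immersion}, since then $N_f$ is a line bundle of degree $-K_{S_\Delta}\cdot C + 2g - 2$ and $K_X \otimes N_f^\vee(R)$ has degree $-|\underline{d}| < 0$. But there is no a priori reason that a general member of an arbitrary irreducible component of $V_{g,\underline{d},\mathscr{L}}$ is immersed: in positive characteristic this genuinely fails (cf. \cite{Tyo13} and Example~\ref{ex:severi with tangency}), and even in characteristic $0$, immersivity of a general member is essentially the content of Zariski's Theorem, which is \emph{deduced from} this dimension bound in \S\ref{sec:zar} --- so assuming it here is circular. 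When $f$ is not an immersion, $N_f$ acquires torsion and the degree bookkeeping above breaks down. The paper avoids this entirely by citing \cite[Theorem~1.2~(1)]{Tyo13}, which proves $\dim V_{g,\underline{d},\mathscr{L}} \le |\underline{d}| + g - 1$ in arbitrary characteristic without any unobstructedness hypothesis. You would also need to handle the case of reducible $C$: the paper does this by writing a general $[C]$ as a union $C_1 \cup \dotsb \cup C_m$, applying the upper bound componentwise, and summing; your moduli space $\mathcal{M}_{g,\underline{d}}$ with a single smooth source curve $X$ does not obviously capture this, and the ``reparametrization quotient'' would need to be spelled out to match the Severi variety's subscheme parametrization rather than a map parametrization.
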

\begin{proof}
By \cite[Lemma~2.6]{CHT20a}, the Severi variety $V_{g,\mathscr L}$ is a locally closed subset in the linear system $|\mathscr L|$, and by \cite[Proposition~2.7]{CHT20a}, it has pure dimension $-\mathscr L\cdot K_{S_\Delta}+g-1$. To estimate the codimension of $V_{g,\underline d,\mathscr L}$ in $V_{g,\mathscr L}$ we follow the approach of \cite{caporaso1998rational}. 

Let $V\subseteq V_{g,\mathscr L}$ be an irreducible component, $\eta\in V$ its generic point, $\overline{\eta}$ the corresponding geometric point, and $C_{\overline{\eta}}$ the corresponding curve. Since the geometric genus of $C_{\overline{\eta}}$ is $g$, it follows that there exists an irreducible finite covering $F_V\:U\to V$ such that the geometric genus of the fiber over $F^{-1}(\eta)$ is $g$. Therefore, the pullback $C_U\to U$ of the universal curve to $U$ is equinormalizable by \cite[Theorem~4.2]{CL06}, since normalization commutes with localization. Set $W$ to be the disjoint union of $U$'s for different irreducible components. Then $F\: W\to V_{g,\mathscr L}$ is a finite surjective map, and the pullback $C_W\to W$ of the universal curve to $W$ is equinormalizable.

Consider the relative Hilbert scheme of points $H\to W$ on the normalized family $C_W^\nu\to W$ of degree $\mathscr L\cdot K_{S_\Delta}$, and the natural section $\psi\:W\to H$ that associates to $w\in W$ the pullback of the boundary divisor $\sum D_i$ under the map $C_w^\nu\to C_w\to S_\Delta$. Since the (relative) Hilbert scheme of points on a smooth (relative) curve is smooth, the standard dimension-theoretic arguments provide a lower bound on the dimensions of the irreducible components of $V_{g,\underline d,\mathscr L}$ as follows. The locus $Z\subset H$ of subschemes of the form $\sum_{i,j}d_{i, j} p_{i, j}$ is locally closed of pure codimension $\sum_{i,j}\left(d_{i,j}-1\right)$. Therefore, $\psi^{-1}(Z)$ is locally closed and has codimension at most 
$$\sum_{i,j}\left(d_{i,j}-1\right)=-\mathscr L\cdot K_{S_\Delta}-|\underline d|$$
at any point. Finally, since $V_{g,\underline d,\mathscr L}$ is a union of components of $F\circ\psi^{-1}(Z)$ and $F$ is finite, it follows that $V_{g,\underline d,\mathscr L}$ is constructible and the dimension of any of its components is at least $$\left(-\mathscr L\cdot K_{S_\Delta}+g-1\right)-\left(-\mathscr L\cdot K_{S_\Delta}-|\underline d|\right)=|\underline d|+g-1.$$

On the other hand, let $V$ be an irreducible component of $V_{g,\underline d,\CL}$, and $[C] \in V$ a general point. Suppose $C$ has $m$ irreducible components $C_1,\dotsc, C_m$. Set $g_k:=\pg(C_k)$, $\CL_k:=\CO_{S_\Delta}(C_k)$, and let $\underline d_k$ be the tangency profile of the parametrized curve induced by normalizing $C_k$. Consider the map $\prod_{k=1}^m V_{g_k,\underline d_k, \CL_k}\to V_{g,\underline d,\CL}$. Its fiber over $[C]$ is finite, and the map is locally surjective. Thus, $\dim_{[C]}(V)=\sum_{k=1}^m\dim_{[C_k]}(V_{g_k,\underline d_k, \CL_k})$. However, $\dim_{[C_k]}(V_{g_k,\underline d_k, \CL_k})\le |\underline d_k|+g_k-1$ by \cite[Theorem~1.2~(1)]{Tyo13}, and hence
$$\dim_{[C]}(V)\le\sum_{k=1}^m\left(|\underline d_k|+g_k-1\right)=|\underline d|+g-1.$$		
Thus, $V$ has dimension $|\underline d|+g-1$, which completes the proof for $V_{g,\underline d,\mathscr L}$. The assertion for $V^\irr_{g,\underline d,\mathscr L}$ now follows too, since $V^\irr_{g,\underline d,\mathscr L}$ is a union of components of $V_{g,\underline d,\mathscr L}$.
\end{proof}

\subsection{The degeneration result}
We are now ready to state our main result. Denote by $\Delta^\circ$ the interior of the polygon $\Delta$ and recall that we denote by $\mathtt w(\Delta)$ the width of $\Delta$. Recall furthermore, that a smooth curve in the linear system $|\mathscr L_\Delta|$ has genus $\#|\Delta^\circ\cap M|$.

\begin{thm}\label{thm:main thm}
Let $\Delta$ be an $h$-transverse polygon and suppose $\mathrm{char}(K)=0$ or $\mathrm{char}(K)>\mathtt w(\Delta)/2$. Let $0 \leq g\leq \#|\Delta^\circ\cap M|$ be an integer and $\underline d$ a tangency profile of $(S_\Delta, \mathscr L_\Delta)$, which is trivial on toric divisors corresponding to non-horizontal sides of $\Delta$. Then $V^\irr_{0,\underline d,\Delta} \subset V$ for every irreducible component $V$ of $\overline V^\irr_{g,\underline d,\Delta}$. In particular, each irreducible component of $\overline V^\irr_{g,\Delta}$ contains $V^\irr_{0,\Delta}$.
\end{thm}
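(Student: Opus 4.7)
The plan is to proceed by induction on $g$. The base case $g=0$ is trivial. For the inductive step with an irreducible component $V$ of $\overline V^\irr_{g,\underline d,\Delta}$, I would reduce matters to exhibiting inside $V$ a subset of dimension $\dim V - 1 = |\underline d|+g-2$ whose general point has geometric genus $g-1$. By Proposition~\ref{prop:tangency}, any such top-dimensional locus is a union of components of $\overline V^\irr_{g-1,\underline d,\Delta}$, and closedness of $V$ ensures each of those components lies entirely in $V$; the inductive hypothesis applied to any one of them yields $V^\irr_{0,\underline d,\Delta}\subset V$. The ``in particular'' assertion follows by setting $\underline d=\underline d_0$ and using $\overline V^\irr_{g,\Delta}=\overline V^\irr_{g,\underline d_0,\Delta}$.

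Following \cite{CHT20a}, I would introduce the sublocus $V^{FD}\subset V$ of curves whose tropicalization, for a suitable valued field structure on $K$, is weightless, $3$-valent, immersed away from contracted legs, and floor decomposed with respect to the $h$-transverse structure of $\Delta$. The triviality of $\underline d$ along the non-horizontal sides of $\Delta$ is precisely what makes a floor decomposition available, as it forces all non-vertical legs of the tropical curve to have multiplicity one. A correspondence-type dimension count shows $V^{FD}$ is dense in $V$. For a generic $[C]\in V^{FD}$ I would take a one-parameter degeneration $\varphi\:B\to V$ carrying $[C]$ to a point of $V\setminus V^{FD}$ and tropicalize the associated family of parametrized curves to obtain $h\:\Gamma_\Lambda\to\RR^2$ together with the moduli map $\alpha\:\Lambda\to M^{\trop}_{g,|\underline d|,\nabla}$, where $\nabla$ is dual to $\Delta$ and refined by $\underline d$.

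The core work is to steer $\alpha$ through $M^{\trop}_{g,|\underline d|,\nabla}$. At a vertex $v\in\Lambda$ whose fiber is weightless and $3$-valent except possibly for one exceptional feature --- a simple $4$-valent wall, a pair of $4$-valent vertices bounding a flattened cycle, or a $2$-valent vertex of weight $1$ --- I would invoke Lemma~\ref{lem:alpha at simple walls and nice strata}, Lemma~\ref{lem:alpha at flattened cycle}, or Lemma~\ref{lem:alpha at weight one vertex} respectively, to conclude that $\alpha$ is either harmonic at $v$ (in which case the family can be redirected along any outgoing edge of $\Star(v)$) or locally combinatorially surjective onto the adjacent maximal strata (in which case we may follow a prescribed degeneration). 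Lemma~\ref{lem:max stretching factor} bounds elevator multiplicities by $\mathtt w(\Delta)$, so only finitely many combinatorial types can intervene. Iterating these trichotomies I would construct a path in $\Lambda$ along which the tropical fiber eventually carries either a contracted loop or a contracted edge that belongs to a cycle. Such a configuration, once reached, can be ``opened'': its length becomes a free tropical parameter that can be pushed arbitrarily large while the lifted algebraic curve remains in the closed set $V$. The resulting $(\dim V - 1)$-dimensional family consists of curves of geometric genus $g-1$, completing the inductive step.

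The main obstacle will be the weight-one, $2$-valent case of Lemma~\ref{lem:alpha at weight one vertex}: there the outcome --- loop (case (1)) versus contracted elliptic tail (case (2)) --- is governed by whether $p:=\mathrm{char}(K)$ divides the elevator multiplicity $\kappa$ through the weight-$1$ vertex. Lemma~\ref{lem:max stretching factor} gives $\kappa\le \mathtt w(\Delta) < 2p$, so if $p\mid\kappa$ then necessarily $\kappa=p$, placing us in case (2); otherwise case (1) applies. Either way a contracted loop or elliptic tail is produced and the genus drops by one. Without the bound $p>\mathtt w(\Delta)/2$, however, $\kappa$ could be a higher power of $p$ and the modular-curve input to Lemma~\ref{lem:alpha at weight one vertex} would break down, consistent with the counterexamples of Proposition~\ref{prop:low characteristic counterexample}. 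A parallel appeal to Proposition~\ref{prop:weight one midpoint} --- which rules out the non-realizable splittings of flattened cycles in Lemma~\ref{lem:alpha at flattened cycle} --- is what ensures the tropical configurations we construct genuinely lift to curves in $\overline{V^{FD}}$.
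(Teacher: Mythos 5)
Your outline mirrors the paper's high-level strategy (induction on $g$, the locus $V^{FD}$, tropicalizing a one-parameter family, the local liftability Lemmas~\ref{lem:alpha at simple walls and nice strata}, \ref{lem:alpha at flattened cycle}, \ref{lem:alpha at weight one vertex}, and the role of $\mathtt{w}(\Delta)$ and the characteristic in the weight-$1$ case), but two mechanisms that carry the weight of the argument are missing. The first is the point constraints: what makes the ``steering'' of $\alpha$ possible is imposing $n-1 = |\underline d| + g - 2$ general point constraints on $V$ to cut it down to a one-dimensional locus $Z$ (Construction~\ref{constr:tropical family}), and recording them by $n-1$ contracted legs, so that $\alpha$ takes values in $M^\trop_{g,n-1,\nabla}$ with image inside $\ev^{-1}(q_1,\dotsc,q_{n-1})$. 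This is what pins the intersection of $\alpha(\Lambda)$ with each nice stratum down to an interval that $\alpha$ must sweep out (Lemma~\ref{lem:local surjectivity}), and what gives the strata the expected dimensions to work with. Your moduli map to $M^\trop_{g,|\underline d|,\nabla}$ does not account for this, and without the constraint locus the harmonicity statements at vertices of $\Lambda$ tell you nothing about where the image goes.

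The second gap is that ``iterating these trichotomies'' is not an argument that a contracted loop or a contracted edge in a cycle can actually be reached. That step is Lemma~\ref{lem:reduce to two elevators in a cycle}, whose proof selects $[C]\in V^{FD}$ of \emph{minimal vertical complexity} and, via a delicate case analysis of moving one elevator past special points, across simple walls, and across walls bounded by flattened cycles (where Proposition~\ref{prop:weight one midpoint} is used to restrict the realizable locus to a one-dimensional sublocus), shows one must arrive at two elevators $E, E'$ of a cycle $O$ adjacent to the same floor that either have opposite slopes or bound a cycle of vertical complexity $2$; the minimality is what guarantees termination. Only from such a configuration does Claim~\ref{cl:develop a contracted edge} produce a leg of $\Lambda$ along which an edge length grows without bound. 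You also treat the base case $g=0$ as trivial, but the irreducibility and non-emptiness of $V^\irr_{0,\underline d,\Delta}$ is Lemma~\ref{lem:irreducibility in rational case}, proved by explicit parametrization; and you do not address why the limiting curve of geometric genus $g-1$ is irreducible, which requires the connectedness guaranteed in Claim~\ref{cl:develop a contracted edge}.
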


The proof of Theorem~\ref{thm:main thm} occupies much of the remainder of this section. It proceeds by induction on $g$. In Lemma~\ref{lem:irreducibility in rational case}, we prove that $V^\irr_{0,\underline d,\Delta}$ is non-empty and irreducible. To prove the induction step, we show that $V$ must contain an irreducible component of the Severi variety $V^\irr_{g-1,\Delta}$. To do so, we impose $\dim(V) - 1$ general point constraints on curves in $V$ and reduce its dimension to $1$. We consider a one-parameter family of parametrized curves $\cX \to B$ associated to an irreducible component of the obtained locus and analyze its tropicalization $\Gamma_\Lambda \to N_\RR$ by investigating the induced map $\alpha$ from $\Lambda$ to the moduli space of parametrized tropical curves.

Lemma~\ref{lem:local surjectivity} provides us with good control over the image of $\alpha$. In particular, it is used in Lemma~\ref{lem:reduce to two elevators in a cycle} to show that $V$ must contain a codimension-one locus of curves $[C]$, whose tropicalization contains a cycle $O$ satisfying combinatorial properties, that are tailored to be able to develop a contracted edge or loop from it. We complete the proof in \S~\ref{subsubsec: proof main thm}, by showing that for an appropriate choice of point constraints, $\Lambda$ necessarily contains a leg parametrizing tropical curves with a fixed image in $N_\RR$ and having a contracted edge or loop, whose length grows to infinity. Furthermore, one of the components of the complement of this edge/loop is a parametrized tropical curve of genus $g-1$ satisfying $\dim(V) - 1$ general point constraints. This allows us to prove that $V$ must contain a codimension-one locus of curves of genus $g-1$. 

To use tropicalizations, we have to work over a valued field. To this end, note that the statement of Theorem~\ref{thm:main thm} is compatible with base field extension. Hence, by passing to the algebraic closure of $K((t))$, we work in the proof over the algebraic closure of a complete DVR, whose residue field is algebraically closed and has characteristic either 0 or greater than $\mathtt w(\Delta)/2$.

\subsubsection{Point constraints} \label{subsubsec: point constraints}
Let $\nabla$ be a tropical degree dual to $\Delta$; see \S~\ref{sec:nottropcur}. Then $\nabla$ induces a tangency profile $\underline d$ of $(S_\Delta, \mathscr L_\Delta)$ as follows. For each $1\le i\le s$, let $a_i$ be the number of slopes in $\nabla$ corresponding to the side $\partial_i\Delta\subset\Delta$, and  $\left\{d_{i,1},\dotsc,d_{i,a_i}\right\}$ the multiset of multiplicities of these slopes. Vice versa, $\underline d$ determines a tropical degree $\nabla$ dual to $\Delta$ uniquely up-to ordering of the slopes. Notice also that if $f \: X \to S_\Delta$ is a parametrized curve of tangency profile $\underline d$, then the degree $\nabla$ of its tropicalization induces the tangency profile $\underline d$ by the very construction of the tropicalization. We set once and for all
\[
n:=|\underline d|+g-1=|\nabla|+g-1. 
\]

Denote by $\ev\: M^\trop_{g,k,\nabla}\to N_\RR^k$ the evaluation map 
\[
\ev\colon (\Gamma, h)\mapsto (h(l_1),\dotsc,h(l_k)),
\]
where $h\colon \Gamma\rightarrow N_\mathbb R$ is a parametrized tropical curve with  contracted legs $l_1,\dotsc,l_k$. 
If $\Theta$ is a combinatorial type with $k$ contracted legs, we define $\ev_\Theta$ to be the composition $\overline{M}_{\Theta}\to M_{g, k, \nabla}^\trop\to N_{\RR}^{k}$. Then for $k$ points $q_1,\dotsc,q_k$ in $N_\mathbb R$, the preimage $\ev_\Theta^{-1}(q_1,\dotsc,q_k)$ is a polyhedron cut out in $\overline{M}_\Theta$ by an affine subspace.

\begin{defn}\label{defn:general position to a curve}
For $k \in \ZZ_{> 0}$ fix points $p_1,\dotsc,p_k\in S_\Delta$ and $q_1,\dotsc,q_k\in N_\RR$. Denote by $H_i\subset|\mathscr L_\Delta|$ the hyperplane parametrizing curves that pass through the point $p_i$.
\begin{enumerate} 
    \item Let $[C] \in V^\irr_{g,\underline d,\Delta}$ pass through all of the $p_i$'s. We say that the $p_i$'s are in \textit{general position with respect to} $[C]$, if the intersection $V^\irr_{g,\underline d,\Delta}\cap \left(\bigcap_{i=1}^k H_i\right)$ has dimension $n-k$ at $[C]$.
    \item Let $(\Gamma,h)\in M^\trop_{g,k,\nabla}$. We say that the $q_i$'s are in \textit{tropical general position with respect to} $(\Gamma,h)$ if the preimage $\ev^{-1}(q_1,\dotsc,q_k)$ has dimension $n-k$ at $(\Gamma,h)$.
\end{enumerate}
\end{defn}

\begin{rem}\label{rem:local general position}
Let $f\: X \to S_\Delta$ be a parametrized curve with $C := f(X)$ passing through points $p_1, \dotsc, p_k$ and $[C] \in V^\irr_{g,\underline d,\Delta}$. Assume that its tropicalization $(\Gamma, h)$ is weightless, $3$-valent, and immersed except at contracted legs. Set $q_i:=\trop(p_i)$. If $q_1,\dotsc,q_k$ are in general position with respect to $(\Gamma, h)$, then so are $p_1,\dotsc,p_k$ with respect to $[C]$.
Indeed, let $M_\Theta$ be the stratum of $M^\trop_{g,0,\nabla}$ that contains the curve obtained from $(\Gamma, h)$ by forgetting the contracted legs and stabilizing. Then $M_\Theta$ is maximal and has dimension $n$ by \cite[Proposition 2.23]{Mik05}.
Thus we may choose $n-k$ points $q_{k+1},\dotsc,q_{n}$ contained in $h(\Gamma)$, 
such that $(\Gamma, h)$ is an isolated point in $\ev^{-1}(q_1,\dotsc,q_{n})\subset M_{g,n,\nabla}^{\trop}$.
Let $p_{k+1},\dotsc,p_{n}\in C$ be $n-k$ points such that $\trop(p_i)=q_i$. We claim that then also $[C]$ is an isolated point in $V^\irr_{g,\underline d,\Delta}\cap (\bigcap_{i=1}^{n} H_i)$, and therefore $V^\irr_{g,\underline d,\Delta}\cap (\bigcap_{i=1}^k H_i)$ has dimension $n-k$ at $[C]$, since $V^\irr_{g,\underline d,\Delta}$ is equidimensional of dimension $n$. Indeed, any irreducible curve $B\subseteq V^\irr_{g,\underline d,\Delta}\cap (\bigcap_{i=1}^{n} H_i)$ containing $[C]$ gives rise to a $1$-dimensional family of parametrized tropical curves $\Gamma_\Lambda \to N_\RR$ (cf. Construction~\ref{constr:tropical family} below). The family $\Gamma_\Lambda \to N_\RR$ then induces a
map $\alpha\: \Lambda \to M^\trop_{g,n,\nabla}$ satisfying $(\Gamma, h) \in \alpha(\Lambda) \subset \ev^{-1}(q_1,\dotsc,q_{n})$. Since the curves over $B$ dominate $S_\Delta$, their tropicalizations cover a dense subset of $N_\mathbb R$ and $\alpha$ is not constant. Since $\Lambda$ and hence also $\alpha(\Lambda)$ is connected, this contradicts the fact that $(\Gamma, h)$ is an isolated point in $\ev^{-1}(q_1,\dotsc,q_{n})$.
\end{rem}

Let $V \subset \overline V^\irr_{g,\underline d,\Delta}$ be an irreducible component, $[C]\in V$ a curve, and $p_1,\dotsc,p_{n-1}\in S_\Delta$ points in general position with respect to $[C]$. Pick an irreducible component $Z\subseteq V\cap \left(\bigcap_{i=1}^{n-1} H_i\right)$ containing $[C]$, and let us briefly explain how to associate a family of parametrized curves to $Z$, which we then can study by investigating its tropicalization. We refer to Step 2 of the proof of \cite[Theorem 5.1]{CHT20a} for technical details.

\begin{constr}\label{constr:tropical family}
First, we pull back the tautological family of curves from $Z$ to the normalization $Z^\nu$. After replacing $Z^\nu$ with a finite covering, we may assume that the resulting family is generically equinormalizable by \cite[\S 5.10]{dJ96}. Normalizing its total space, and restricting the obtained family to an open dense subset of the base, we get a family $f \colon \CX \to S_\Delta$ of genus $g$ parametrized curves over the complement of finitely many points $\tau_\bullet$ in a smooth projective base curve $B$.
Next, after replacing $B$ with a finite covering, we may assume that the marked points $\sigma_\bullet$ of $\mathscr X$ are the points that are mapped to $\{p_i\}_{i=1}^{n-1}$ and to the boundary divisors of $S_\Delta$. Furthermore, we may assume that the family $\mathscr X \to B\setminus\tau_\bullet$ extends to a split stable model $\mathscr X^0 \to B^0$, where $B^0$ is a prestable model of $(B,\tau_\bullet)$ over $K^0$.
In particular, for $b \in B$ that gets mapped to a general $[C] \in Z$ under the natural projection $B\rightarrow Z$, the fiber $\cX_b$ of $\cX \to B$ is the normalization of $C$, equipped with the natural map to $S_\Delta$.
\end{constr}

Recall from \S~\ref{subsec:simple walls and nice strata} that we consider two types of regular strata $M_\Theta$ in $M_{g, n-1, \nabla}^\trop$: nice strata and simple walls. The former have the expected dimension $2n - 1$, and the latter $2n - 2$. Since both are automorphism free, we have $M_\Theta=M_{[\Theta]}$ and may consider $M_\Theta$ as a subset of $M^\trop_{g,n-1,\nabla}$. 

\begin{lem}\label{lem:local surjectivity}
Let $\alpha\colon \Lambda\rightarrow M^\trop_{g,n-1,\nabla}$ be the moduli map induced by the tropicalization of the family $f\: \cX \dashrightarrow S_\Delta$ given by imposing $n - 1$ general point constraints $p_i$ on $V$ as in Construction~\ref{constr:tropical family}. Set $q_i = \trop(p_i)$.
Then $\alpha$ is not constant and the following hold:
\begin{enumerate}
\item Suppose $\alpha$ maps a vertex $v\in V(\Lambda)$ to a simple wall $M_\Theta$, and $M_\Theta\cap\ev_\Theta^{-1}(q_1,\dotsc,q_{n-1})=\alpha(v)$ is a single point.
Then there exists a vertex $w\in V(\Lambda)$ such that $\alpha(w)=\alpha(v)$ and the map $\alpha$ is locally combinatorially surjective at $w$.

\item Suppose $M_{\Theta'}$ is a nice stratum, $\alpha(\Lambda)\cap M_{\Theta'}\ne\emptyset$ and $\dim \left(M_{\Theta'}\cap\ev_{\Theta'}^{-1}(q_1,\dotsc,q_{n-1})\right)=1$.
Then $M_{\Theta'}\cap\ev_{\Theta'}^{-1}(q_1,\dotsc,q_{n-1})=\alpha(\Lambda)\cap M_{\Theta'}$ is an interval, whose boundary in $\overline M_{\Theta'}$ is disjoint from $M_{\Theta'}$.
\end{enumerate}
\end{lem}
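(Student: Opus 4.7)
The plan is to establish the non-constancy of $\alpha$ first, then the two numbered claims, using Lemma \ref{lem:alpha at simple walls and nice strata} as the main input.

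For non-constancy. The family $\cX \to B$ is a pencil of curves through the general points $p_1, \dotsc, p_{n-1}$, so the image $f(\cX) \subset S_\Delta$ is a two-dimensional subvariety. By the Bieri--Groves theorem applied to $f(\cX) \cap T$, its tropicalization is a two-dimensional subset of $N_\RR$. On the other hand, this tropicalization coincides (up to closure) with $\bigcup_{q\in\Lambda} h_q(\Gamma_q)$, which would be only one-dimensional if $\alpha$ were constant, giving a contradiction.

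For claim (1). I would consider the connected component $\Lambda_0 \subset \Lambda$ of $\alpha^{-1}(\alpha(v))$ containing $v$. By non-constancy, $\Lambda_0 \subsetneq \Lambda$, so there is a boundary vertex $w \in \Lambda_0$ together with an edge $e \in \Star(w)$ along which $\alpha$ is not constantly equal to $\alpha(v)$. Writing $\alpha(e^\circ) \subset M_{\Theta''}$, I would rule out $M_{\Theta''} = M_\Theta$ (the isolation $M_\Theta \cap \ev_\Theta^{-1}(q_\bullet) = \{\alpha(v)\}$ together with $\alpha(\Lambda) \subset \ev^{-1}(q_\bullet)$ would force $\alpha|_e$ to be constant) and also $M_{\Theta''}$ being a proper face of $\overline{M_\Theta}$ (any such face has strictly smaller dimension, so by the generic position of $q_\bullet$ its intersection with $\ev_{\Theta''}^{-1}(q_\bullet)$ has expected negative dimension and is empty). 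So $M_{\Theta''}$ is one of the three adjacent nice strata obtained by splitting the $4$-valent vertex of $\Theta$; in particular $\alpha$ does not lift to $\overline{M_\Theta}$ locally at $w$ and is therefore not harmonic there. Lemma \ref{lem:alpha at simple walls and nice strata} then forces $\alpha$ to be locally combinatorially surjective at $w$.

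For claim (2). At any $w \in \Lambda$ with $\alpha(w) \in M_{\Theta'}$, the sketch of Lemma \ref{lem:alpha at simple walls and nice strata} shows that $\alpha$ must be harmonic at $w$: the non-harmonic case in the sketch requires a $4$-valent vertex, which is absent from the nice $\Theta'$. Harmonicity forces $\alpha(\Star(w)) \subset \overline{M_{\Theta'}}$, and generic position of $q_\bullet$ makes proper faces of $\overline{M_{\Theta'}}$ disjoint from $\ev^{-1}(q_\bullet)$, so in fact the outgoing edges from $w$ remain inside $M_{\Theta'}$ with image in $C \coloneqq M_{\Theta'} \cap \ev_{\Theta'}^{-1}(q_\bullet)$, and their slopes along $C$ cancel. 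From this I would conclude that $\alpha(\Lambda) \cap M_{\Theta'}$ is both open in $C$ (via harmonicity giving a neighborhood of each image point) and closed in $M_{\Theta'}$ (since $\alpha(\Lambda)$ is closed in $M_{g,n-1,\nabla}^{\trop}$), hence equal to all of $C$ by connectedness of $C$ and non-emptiness of the intersection. The remaining claim is then immediate: the boundary of $C$ in $\overline{M_{\Theta'}}$ is $\ev^{-1}(q_\bullet) \cap (\overline{M_{\Theta'}} \setminus M_{\Theta'})$, which is by construction disjoint from $M_{\Theta'}$.

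The main obstacle will be making the "generic position of $q_\bullet$" argument rigorous and uniform: I need the algebraic genericity of $p_\bullet$ on $V$ to translate into tropical genericity of $q_\bullet = \trop(p_\bullet)$ strong enough to simultaneously rule out, via dimension counts, the unwanted intersections of $\ev^{-1}(q_\bullet)$ with proper faces of each $\overline{M_{\Theta''}}$ appearing in the argument.
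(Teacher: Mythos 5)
Your structure tracks the paper's proof closely and the overall approach is sound, but one step imports an unnecessary obstacle and another needs a justification that is easy to supply but currently missing.

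\smallskip
\emph{Non-constancy.} Both routes (Bieri--Groves on $f(\cX)\cap T$, versus observing directly that if $\alpha$ were constant, the images $h_q(\Gamma_q)$ would sweep out a merely one-dimensional subset of $N_\RR$, contradicting dominance of $f$) give the same conclusion; the paper's phrasing is slightly more self-contained, but yours works.

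\smallskip
\emph{Part (1).} The classification of the stratum $M_{\Theta''}$ containing $\alpha(e^\circ)$ is unnecessary, and it is exactly what imports the genericity obstacle you flag at the end. The cleaner argument, which the paper uses, goes straight through: once a vertex $w$ is found with $\alpha(w)=\alpha(v)$ and $\alpha$ non-constant at $w$, note that $\alpha(\Lambda)\subset\ev^{-1}(q_\bullet)$ together with the hypothesis $M_\Theta\cap\ev_\Theta^{-1}(q_\bullet)=\{\alpha(v)\}$ gives $\alpha(\Lambda)\cap M_\Theta=\{\alpha(v)\}$. If $\alpha$ lifted to $M_\Theta$ near $w$, then $\alpha(\Star(w))\subset M_{[\Theta]}=M_\Theta$, forcing $\alpha(\Star(w))=\{\alpha(v)\}$ and contradicting non-constancy at $w$. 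No information about which stratum $\alpha(e^\circ)$ lands in is needed, and in particular no dimension count and no genericity of $q_\bullet$ beyond what the hypothesis already encodes. The ``main obstacle'' you identify simply disappears.

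\smallskip
\emph{Part (2).} The claim that $\alpha(\Lambda)$ is closed in $M^\trop_{g,n-1,\nabla}$ is stated without justification, and since $\Lambda$ has unbounded legs and $\alpha$ is not evidently proper, this does need an argument. It is true that $\alpha(\Lambda)\cap M_{\Theta'}$ is closed in $M_{\Theta'}$ — a piecewise integral affine map on a graph has no finite non-attained asymptotic values along a leg — but you should say so. The paper avoids the point by arguing directly from harmonicity: every vertex of $\Lambda$ mapping to $M_{\Theta'}$ is harmonic (Remark~\ref{rem: harmonic when weightless trivalent}), which, since the slopes must lie along the interval $M_{\Theta'}\cap\ev_{\Theta'}^{-1}(q_\bullet)$ and sum to zero, prevents $\alpha(\Lambda)$ from having a boundary point interior to that interval; the detailed bookkeeping is deferred to the analogous lemma in \cite{CHT20a}. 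Your open-and-closed formulation is equivalent in spirit, just with one more claim to justify. Note also that ``openness'' relies on the observation that harmonicity at $w$ already forces $\alpha(\Star(w))\subset M_{\Theta'}$ (a lift to $M_\Theta$, not $\overline{M_\Theta}$, is required), which you use implicitly and could make explicit.
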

\begin{proof}
By construction, $\alpha$ is obtained by tropicalizing a family of parametrized curves $f\: \cX \dashrightarrow S_\Delta$ over a curve $B$.
Since $f$ is dominant onto $S_\Delta$, the image of $\Gamma_\Lambda \to N_\RR$ covers a dense subset of $N_\mathbb R$. Hence the image of $\Lambda$ in $M^\trop_{g,n,\nabla}$ is not constant.

(1)  Since $\alpha$ is not constant and $\Lambda$ connected, we can find $w\in V(\Lambda)$ such that $\alpha(w)=\alpha(v)$ and $\alpha$ is non-constant at $w$. Since $\alpha(\Lambda)\subset \ev^{-1}(q_1,\dotsc,q_{n-1})$, we have $\alpha(\Lambda)\cap M_\Theta=\alpha(v)$. It follows that $\alpha$ does not induce a map $\Star(w) \to M_\Theta$. By definition, $\alpha$ is thus not harmonic at $w$. Hence by Lemma~\ref{lem:alpha at simple walls and nice strata}, $\alpha$ is locally combinatorially surjective at $w$.

(2) Since $M_{\Theta'}$ is a nice stratum of $M^\trop_{g,n-1,\nabla}$, the underlying graph $\GG$ of $\Theta'$ in particular is weightless and $3$-valent. Thus by Remark~\ref{rem: harmonic when weightless trivalent}, $\alpha$ is harmonic at vertices of $\Lambda$ that map to $M_{\Theta'}$.
On the other hand, since 
$M_{\Theta'}\cap\ev_{\Theta'}^{-1}(q_1,\dotsc,q_{n-1})$ is the intersection of finitely many (open) half-spaces and hyperplanes, it is an interval. Since $\alpha(\Lambda)\cap M_{\Theta'}$ is contained in $M_{\Theta'}\cap\ev_{\Theta'}^{-1}(q_1,\dotsc,q_{n-1})$, the harmonicity of $\alpha$ shows that $M_{\Theta'}\cap\ev_{\Theta'}^{-1}(q_1,\dotsc,q_{n-1})=\alpha(\Lambda)\cap M_{\Theta'}$, and its boundary in $\overline M_{\Theta'}$ is disjoint from $M_{\Theta'}$. Compare to \cite[Lemma 3.9 (3)]{CHT20a}.
\end{proof}

\subsubsection{Cycles in floor decomposed curves} \label{subsubsec:cycles in floor decomposed curves} We specify some notation that is convenient for the later proofs. Given a stable, 3-valent, and floor decomposed curve $(\Gamma,h)$, and a cycle $O$ in $\Gamma$, we denote by $n(O)$ the number of basic floor-to-floor elevators of $\Gamma$ that are contained in $O$, and call it the \textit{vertical complexity} of $O$. Let $n(\Gamma)$ be the minimal vertical complexity among all cycles of $\Gamma$, which we call the \textit{vertical complexity} of $\Gamma$.
A point in $h(\Gamma)$ is called \textit{special} if it is the image of a vertex or a contracted leg on a floor.

Recall that $V$ denotes an irreducible component of $\overline V^\irr_{g,\underline d,\Delta}$. Since curves $[C]$ in $V^\irr_{g,\underline d,\Delta}$ are torically transverse, we can view the map $C^\nu \to S_\Delta$ from the normalization of $C$ as a parametrized curve by marking the preimage of the toric boundary. We denote by $\trop(C) = (\Gamma_C, h_C)$ the tropicalization of this parametrized curve. Let $V^{FD}\subset V \cap V^\irr_{g,\underline d,\Delta}$ 
be the locus of curves $[C]$ such that the tropicalization $\trop(C)$ is floor decomposed, weightless, $3$-valent, and immersed except at contracted legs (in particular, contained in a nice stratum in the moduli of parametrized tropical curves by \cite[Proposition 2.23]{Mik05}). Given $[C]\in V^{FD}$, we define the \textit{vertical complexity} of $C$ to be the vertical complexity of $\trop(C)$. 
Note that by the construction of $V^{FD}$, all basic floor-to-floor elevators of $\trop(C)$ have genus $0$, and $h_C$ is injective on any basic floor-to-floor elevator of $\Gamma_C$ away from contracted legs.
The following lemma provides the first part of the degeneration we construct in order to prove Theorem~\ref{thm:main thm}.

\begin{lem}\label{lem:reduce to two elevators in a cycle}
Suppose $V$ is a component of $\overline V^\irr_{g,\underline d,\Delta}$ with $g\geq 1$ and $\underline d$ is a tangency profile trivial on
all toric divisors of $S_\Delta$ corresponding to non-horizontal sides of $\Delta$ as in Theorem \ref{thm:main thm}. Then there exists $[C]\in V^{FD}$ whose tropicalization  $\trop(C) = (\Gamma_C,h_C)$
contains two elevators $E$ and $E'$ adjacent to the same floor $F$ and contained in the same cycle $O$ of $\Gamma_C$ such that either
\begin{enumerate}
\item 
$\frac{\partial h_C}{\partial \vec E}=-\frac{\partial h_C}{\partial \vec E'}$, where $\vec E$ and $\vec E'$ are oriented outwards from $F$; 
or
\item $O$ has vertical complexity 2.
\end{enumerate}
\end{lem}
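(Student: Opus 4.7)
The plan is to prove this by extremal choice combined with a deformation argument via tropical moduli. First, I would establish that $V^{FD}$ is non-empty. By imposing $n$ general point constraints $p_1,\dotsc,p_n \in S_\Delta$ whose tropicalizations lie sufficiently spread out in $N_\RR$, the classical theory of floor decomposition for $h$-transverse polygons (following Brugall\'e--Mikhalkin \cite{BM08} and Mikhalkin \cite{Mik05}) forces any curve in $V^\irr_{g,\underline d,\Delta}$ passing through the $p_i$ to have a floor-decomposed, weightless, $3$-valent, immersed tropicalization; since $V$ has dimension $n$, we may arrange that at least one such curve lies in $V$, so $V^{FD}\neq\emptyset$.

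Next, among $[C]\in V^{FD}$ whose tropicalization contains a cycle involving elevators, I would pick $[C_0]$ that minimizes the vertical complexity $k=n(O)$ of such a cycle $O$. Since any cycle using basic floor-to-floor elevators must contain at least two of them, $k\geq 2$; if $k=2$ we are done by case (2). The existence of a curve with some vertical cycle should follow from the floor-decomposed structure together with $g\geq 1$: otherwise all the genus is concentrated within individual floors, and after a small deformation of one of the point constraints the tropicalization must develop a cycle crossing floors for some base field extension.

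Now suppose $k\geq 3$ and that case (1) fails for $[C_0]$; I would derive a contradiction with the minimality of $k$. Imposing $n-1$ general point constraints on $[C_0]$, Construction~\ref{constr:tropical family} produces a one-parameter family $f\colon \cX \dashrightarrow S_\Delta$ over a base curve $B$ whose tropicalization gives a non-constant moduli map $\alpha\colon \Lambda \to M^\trop_{g,n-1,\nabla}$ by Lemma~\ref{lem:local surjectivity}. By choosing the $p_i$ so that $|\underline d|-1$ of their tropicalizations pin down the legs and positions of $\trop(C_0)$ away from $O$, the remaining freedom in the family is concentrated in a deformation of the cycle $O$; following $\alpha$, this deformation must eventually hit a wall at which two elevators of $O$ collide on a floor $F$, producing a $4$-valent vertex. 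At this wall, Lemma~\ref{lem:alpha at simple walls and nice strata} provides local combinatorial surjectivity, so all three adjacent nice strata are reached by $\alpha$. Analyzing the three splittings of the $4$-valent vertex, one produces a cycle with $k-1$ elevators (contradicting minimality), a second produces a cycle in which two elevators at $F$ have opposite outgoing slopes (giving case (1), again a contradiction), and the third should be excluded either by realizability via Proposition~\ref{prop:weight one midpoint} or because it fails to lower $k$.

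The main obstacle I anticipate is controlling the wall-crossing analysis: I need to verify that the collision of elevators in $O$ is actually realized along $\alpha(\Lambda)$, rather than some unrelated wall elsewhere in $\trop(C_0)$. This requires a careful arrangement of the constraints $p_1,\dotsc,p_{n-1}$ so as to rigidify the rest of $\trop(C_0)$ and leave $O$ as essentially the only degree of freedom in the family, and then a dimension count for $\ev_\Theta^{-1}(q_1,\dotsc,q_{n-1})$ to force $\alpha$ through the desired wall. Secondary complications come from intermediate walls involving flattened cycles or weight-one vertices (Lemmas~\ref{lem:alpha at flattened cycle} and \ref{lem:alpha at weight one vertex}), which must be shown either to be avoidable or to directly produce the cycle structure in cases (1) or (2).
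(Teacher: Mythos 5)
Your broad strategy -- minimize vertical complexity, then derive a contradiction via the moduli map $\alpha$ and wall crossings through simple walls -- does match the paper's. However, several essential combinatorial choices are left vague in a way that prevents the argument from closing, and you misallocate the harder local liftability lemmas to this step.

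First, the proposed rigidification -- "pin down the legs and positions of $\trop(C_0)$ away from $O$" so that "the remaining freedom... is concentrated in a deformation of the cycle $O$" -- does not reflect what actually happens and is not what one should aim for. The paper chooses one point constraint per floor and per elevator (in vertically stretched position, which requires its own intermediate deformation argument to reach), and drops precisely the constraint on one specific elevator $E$; the one-dimensional family then moves $E$ horizontally, not the cycle $O$ as a whole. This matters because the entire case analysis hinges on which elevator moves and toward what. The paper picks $E$ adjacent to the highest floor $F_k$ containing an edge of a minimal cycle $O$, lets $F_s$ be the other floor adjacent to $E$, and lets $E'$ be the first elevator of $O$ after $E$ along $F_s$. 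Without this canonical choice you cannot set up the dichotomy $s'<s$ versus $s'>s$ (with $F_{s'}$ the third floor involved) that drives the conclusion.

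Second, the claim that "this deformation must eventually hit a wall at which two elevators of $O$ collide" needs an induction, which you do not supply: as $E$ slides toward $E'$ it first collides with other special points of $F_k\cup F_s$, and one must cross each of these simple walls using local combinatorial surjectivity, at each step landing back in $V^{FD}$ with unchanged vertical complexity, before the intended collision with $E'$ occurs. Once $E$ and $E'$ do collide, your analysis of the three splittings is not right: the splitting that "produces a cycle with $k-1$ elevators" does not appear immediately at that wall; it appears only after further wall crossings that carry $E$ along $E'$ and $E''$ to the far floor $F_{s'}$ (when $s'<s$ and $\m(E)\neq\m(E')$). When $s'>s$ and $s'=k$ you are directly in case (2), not in a situation to be "excluded by realizability".

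Third, you anticipate needing Lemmas~\ref{lem:alpha at flattened cycle} and \ref{lem:alpha at weight one vertex} and Proposition~\ref{prop:weight one midpoint} as "secondary complications". In fact none of these is used in the proof of this lemma: every wall encountered in the horizontal slide of $E$ is a simple wall, handled by Lemma~\ref{lem:local surjectivity}(1) (via Lemma~\ref{lem:alpha at simple walls and nice strata}). The flattened cycle and weight-one walls, and the well-spacedness constraint, enter only in Claim~\ref{cl:develop a contracted edge}, where the contracted edge or loop is actually developed. Absorbing them into the present lemma would actually obscure why the weaker hypothesis $\mathrm{char}(K)>\mathtt w(\Delta)/2$ suffices: the characteristic plays no role at this stage of the argument.
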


\begin{proof}
We first show that $V^{FD}$ is not empty. Since $\Delta$ is $h$-transverse, for any $n$ vertically stretched points $q'_1,\dotsc,q'_{n}\subset N_\mathbb R$ that are in general position, all curves in $\ev^{-1}(q'_1,\dotsc,q'_{n})\subset M^\trop_{g,n,\nabla}$ are weightless, $3$-valent, floor decomposed and immersed by \cite[\S 5.1]{BM08}. Note that while in {\it loc. cit.} only tropical degrees $\nabla$ associated to trivial 
$\underline d$ are considered, the argument there works also in our case. 
For the coordinate-wise tropicalization map $\trop\: T \to N_\RR$ on the dense torus $T \subset S_\Delta$, the preimage $\trop^{-1}(q_i')$ is Zariski dense (though not algebraic) in $T$. Thus, we can choose $p'_i$ in $S_\Delta$, such that $\trop(p'_i)=q'_i$ and there are curves in $V \cap V^\irr_{g,\underline d,\Delta}$ that pass through all of the $p_i'$. Since any such curve is contained in $V^{FD}$, it follows in particular that $V^{FD}$ is not empty.

Pick $[C]\in V^{FD}$ with minimal vertical complexity; we will show that $C$ satisfies the claim of the lemma.
Let $\mathtt h$ be the height of $\Delta$, namely $\mathtt h=\max_{q\in\Delta}y_q-\min_{q\in\Delta}y_q$. Denote the floors of the tropicalization $\trop(C)$ from the bottom to the top by $F_1,\dotsc, F_\mathtt h$. Pick points $q_1,\dotsc,q_{n}\in N_\mathbb R$ such that (1) each floor and elevator of $\trop(C)$ contains exactly one of the $q_i$ in its image and (2) the preimage $x_i=h^{-1}_C(q_i)\in\Gamma_C$ is a single point contained in the interior of an edge or leg.

For each $1\leq i\leq n$, let $\Theta'_i$ be the combinatorial type in $M^\trop_{g,n-1,\nabla}$ obtained from $\trop(C)$ by adding one contracted leg at $x_j$ for each  $j\neq i$. Note that each contracted leg is either on a floor or adjacent to two elevators, and $M_{\Theta'_i}$ is a nice stratum. In this way, we can view $[\trop(C)]$ as a point in $M_i:=M_{\Theta'_i}\cap\ev_{\Theta'_i}^{-1}(q_1,\dotsc,\hat q_i,\dotsc,q_{n})$. 
Since $\Gamma_C\setminus \{x_i\}_{1\leq i\leq n}$ is a disjoint union of trees, each containing exactly one non-contracted leg, the points $(q_1,\dotsc,\hat q_i,\dotsc,q_{n})$ are in general position with respect to $\trop(C)$ 
and $\dim M_i=1$
(compare to \cite[Lemma 4.20]{Mik05}).

\textbf{Step 1. The reduction to the case of vertically stretched points.} For each $1\leq i\leq n$, pick $p_i\in C$ such that $\trop(p_i)=q_i$. It follows from Remark~\ref{rem:local general position} that, for each $i$, the points $\{p_j\}_{j\neq i}$ are in general position with respect to $[C]\in V$. Let $\alpha_i\colon \Lambda_i\rightarrow M^\trop_{g,n-1,\nabla}$ be induced by the family of parametrized tropical curves obtained from Construction~\ref{constr:tropical family} with respect to the $n-1$ points $(p_1,\dotsc,\hat p_i,\dotsc,p_{n})$.
We have $M_i\subset \alpha_i(\Lambda_i)$ by Lemma~\ref{lem:local surjectivity} (2), and the curves in $M_i$ are obtained from $\trop(C)$ by either moving the image of a floor vertically or moving the image of an elevator  horizontally within a suitable distance (until some two adjacent vertices collapse); see Figure~\ref{fig:perturb}.  Since $\trop(C)$ is floor decomposed, the curves in  $M_i$ have the same vertical complexity as $\trop(C)$.

\tikzset{every picture/.style={line width=0.75pt}}
\begin{figure}[ht]	
\begin{tikzpicture}[x=0.45pt,y=0.45pt,yscale=-0.9,xscale=0.9]
\import{./}{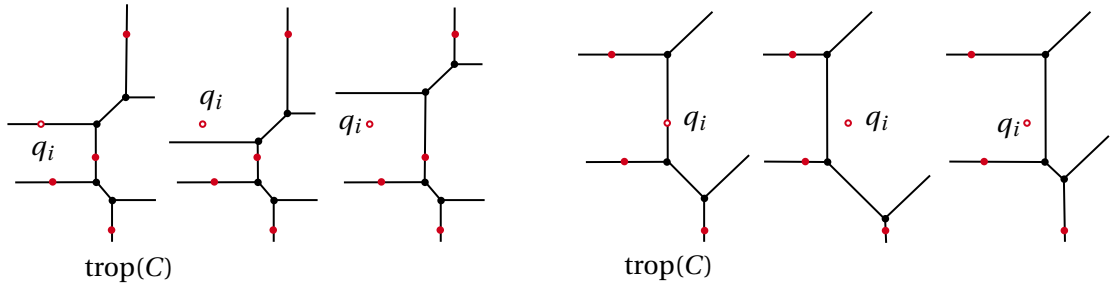}
\end{tikzpicture}		
\caption{The curves in $M_i$. The left is when $q_i$ is contained in the image of a floor of $\trop(C)$, while on the right $q_i$ is contained in the image of an elevator.}
\label{fig:perturb}
\end{figure}

Assume now that $q_i$ is contained in the image of $F_i$ for $1\leq i\leq \mathtt h$. We deform all the points $\{q_i\}_{1\leq i\leq n}$ so that they are vertically stretched and in general position with respect to a curve that has the same vertical complexity as $\trop(C)$, as follows. We start with the top floor $F_{\mathtt h}$. On any upward elevator adjacent to $F_{\mathtt h}$, we may move the point $q_j$ it contains up the elevator for any distance; that is, any point $q_j' = q_j + t (0, 1)$ with $t \in \RR_{\geq 0}$ is still contained in $h_C(\Gamma_C)$. By first moving the $q_j$ up an appropriate distance, we then can move up $F_{\mathtt h}$ any distance, without changing the combinatorial type of $\trop(C)$; that is, we consider the family of curves, starting with $\trop(C)$, that is cut out by replacing $q_{\mathtt h}$ with $q_{\mathtt h} + t (0,1)$ for varying $t \in \RR_{\geq 0}$. By definition, the curves obtained in this way are contained in $M_j$ or $M_{\mathtt h}$. Repeating this construction for each floor $F_i$, with $i$ going down from $\mathtt h$ to $1$, we see that we can increase the vertical distance between any two points in $\{q_i\}_{1\leq i\leq n}$ arbitrarily while preserving their horizontal distance. In particular, we can move the $\{q_i\}_{1\leq i\leq n}$ until they are vertically stretched.

Since the curves obtained by moving the $q_i$'s as above remain in $M_i$ and $M_i\subset \alpha_i(\Lambda_i)$, the parametrized tropical curve we obtain after stretching the $q_i$ is still the tropicalization of a curve $C'$ in $V^{FD}$. Furthermore, $C'$ has the same vertical complexity as $C$, and the image $h_{C'}(\Gamma_{C'})$ contains $n$ vertically stretched points $q_i$ that are in general position with respect to $\trop(C')$. To ease notation, we assume that $C$ already satisfies these properties.

\textbf{Step 2. The curve $C$ satisfies the assertion of the Lemma.} Recall that $q_i$ is contained in the image of $F_i$ for $1\leq i\leq \mathtt h$.
Let $O$ be a cycle in $\trop(C)$ with minimal vertical complexity. 
Let $F_k$ be the highest floor containing an edge in $O$ and pick an elevator $E\subseteq O$ that is adjacent to $F_k$. Let $F_s$ be the other floor
adjacent to $E$. Suppose $O$ is oriented from $E$ towards an edge in $F_s$, and let $E'\subseteq O$ be the first elevator adjacent to $F_s$ that appears after $E$; see Figure~\ref{fig:wc1}. 
Reordering $\{q_i\}_{i=1}^{n}$ if necessary, we may assume that $E$ contains $q_{n}$.
As before, pick $p_1,\dotsc,p_{n-1}\in C$ such that $\trop(p_i)=q_i$, and consider $\trop(C)$ as a curve in the family of parametrized tropical curves $h\:\Gamma_\Lambda \to N_\RR$ obtained from Construction~\ref{constr:tropical family} for the points  $p_1,\dotsc,p_{n-1}$.  Let $\Theta'$ be the combinatorial type of $\trop(C)$. Then $M_{\Theta'}$ is a nice stratum of $M_{g, n-1, \nabla}^\trop$ by assumption.

\tikzset{every picture/.style={line width=0.75pt}}
\begin{figure}[ht]	
\begin{tikzpicture}[x=0.5pt,y=0.5pt,yscale=-0.8,xscale=0.8]
\import{./}{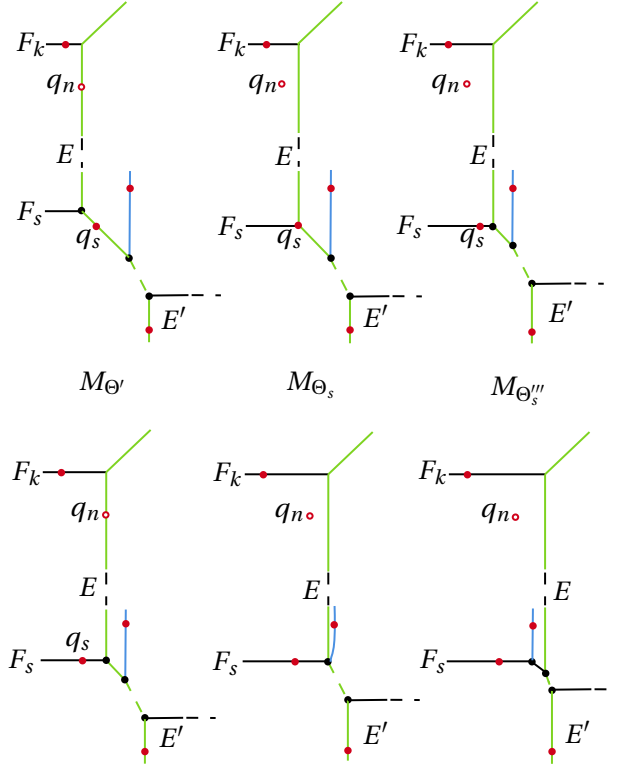}
\end{tikzpicture}		
\caption{Moving $E$ towards $E'$. The left column is $\trop(C)$. In the top row passing through $q_s$, in the bottom row through a vertex in $F_s$. The case of passing through $q_k$ or a vertex in $F_k$ is analogous. The edges in $O$ are colored green.}
\label{fig:wc1}
\end{figure}

We claim that, by moving $E$ towards $E'$ and passing through the special points on the way, the problem reduces to the case where there is no special point in $F_k\cup F_s$ with $x$-coordinate between the $x$-coordinates of $E$ and $E'$.
To prove this, assume without loss of generality that the $x$-coordinate of $E$ is less than that of $E'$, and there are exactly $r$ special points in $F_k\cup F_s$ with $x$-coordinates between those of $E$ and $E'$. We will proceed by induction on $r$. The base case is trivial, so we assume $r>0$. We will describe the inductive step only for the case that, among the $r$ special points, the one with $x$-coordinate closest to that of $E$ is the marked point $q_s \in F_s$, the image of a contracted leg $l_s$. The argument for the other cases is similar, see Figure~\ref{fig:wc1}.

Recall that $\alpha\colon  \Lambda\rightarrow M^\trop_{g,n-1,\nabla}$ is the map induced by the family of parametrized tropical curves $\Gamma_\Lambda \to N_\RR$. Moving $E$ horizontally towards $E'$, until its lower vertex hits $q_s$, we get a tropical curve represented by a boundary point of $M_{\Theta'}\cap\ev_{\Theta'}^{-1}(q_1,\dotsc,q_{n-1})$, which is contained in a simple wall $M_{\Theta_s}$ such that $E$ is adjacent to the leg $l_s$ contracted to $q_s$. By Lemma~\ref{lem:local surjectivity} (2), $\alpha(\Lambda)$ contains $M_{\Theta'}\cap\ev_{\Theta'}^{-1}(q_1,\dotsc,q_{n-1})$, hence also contains this boundary point. Let $\Theta'''_s\neq \Theta'$ be the other nice stratum in the star of $M_{\Theta_s}$ such that $E$ and the leg $l_s$ contracted to $q_s$ are not adjacent.
Hence in $\Theta'''_s$
the $x$-coordinate of $E$ is greater than that of $q_s$ (whereas in $\Theta'$ it is the other way around). By Lemma~\ref{lem:local surjectivity} (1), there is a vertex $w$ of $\Lambda$ such that $\alpha(w)\in M_{\Theta_s}\cap \ev_{\Theta_s}^{-1}(q_1,\dotsc,q_{n-1})$ and $\alpha$ is locally combinatorially surjective at $w$. Thus, there is an edge $e$ of $\Lambda$ such that $\alpha(e^\circ)\subset M_{\Theta'''_S}\cap \ev_{\Theta'''_s}^{-1}(q_1,\dotsc,q_{n-1})$, where $e^\circ$ denotes the interior of $e$. Now we may replace $\trop(C)$ with any curve represented by a point in $\alpha(e^\circ)$, which contains $r-1$ special points on $F_k\cup F_s$ with $x$-coordinates between those of $E$ and $E'$, and has the same vertical complexity as $\trop(C)$ by construction. This completes the inductive step.

Now assume that there is no special point of $\trop(C)$ on $F_k\cup F_s$ with $x$-coordinate between those of $E$ and $E'$. Let $q_j$ be the marked point contained in $h_C(E')$, and $E''$ the second elevator for which $q_j\in h_C(E'')$. Let $F_{s'}$ be the floor adjacent to $E''$.
Since there is no special point in $F_k\cup F_s$ with $x$-coordinate between those of $E$ and $E'$, there is a boundary point of $M_{\Theta'}\cap\ev_{\Theta'}^{-1}(q_1,\dotsc,q_{n-1})$ that is contained in the simple wall $M_\Theta$ in which $E$ and $E'$ get adjacent to the same 4-valent vertex $u$. There are two cases to consider:
\begin{enumerate}
    \item $s'<s$, i.e., $E'$ lies below $F_s$; and
    \item $s'>s$, i.e., $E'$ lies above $F_s$.
\end{enumerate}
See Figure~\ref{fig:wc2} and Figure~\ref{fig:wc3}, respectively, for an illustration. 
\tikzset{every picture/.style={line width=0.75pt}}
\begin{figure}[ht]	
\begin{tikzpicture}[x=0.4pt,y=0.4pt,yscale=-0.8,xscale=0.8]
\import{./}{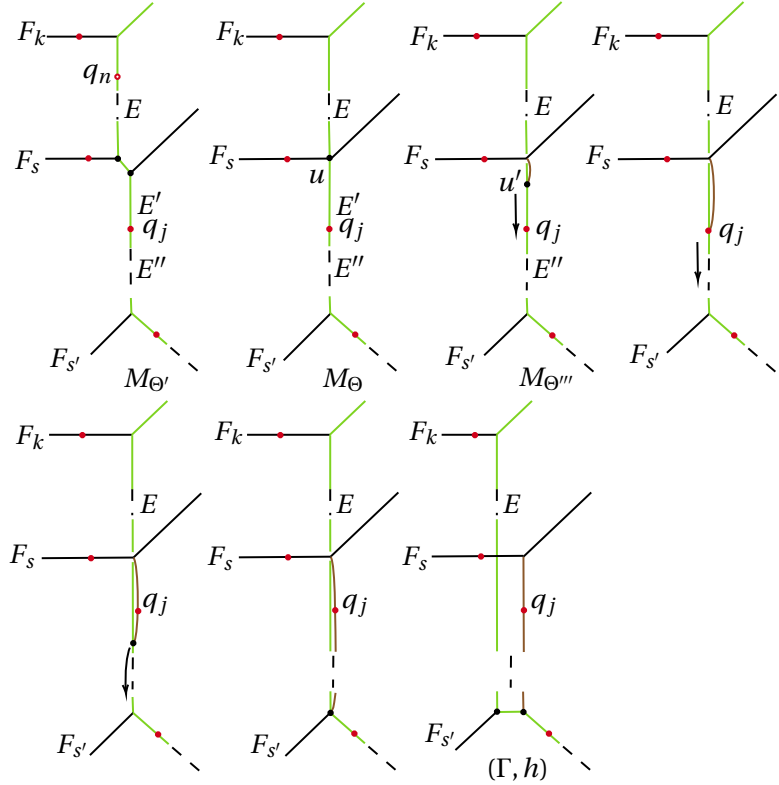}
\end{tikzpicture}		
\caption{Moving $E$ down the elevators $E'$ amd $E''$ with $\m(E') > \m(E)$ (Case 1 in the proof of Lemma~\ref{lem:reduce to two elevators in a cycle}). The edges in the cycle $O$ are colored green.}
\label{fig:wc2}
\end{figure}
In both cases, let $M_{\Theta'''}$ denote the nice stratum in the star of $M_{\Theta}$ in which the elevators $E$ and $E'$ get attached to the same $3$-valent vertex $u'$ obtained from splitting $u$ into two $3$-valent vertices. By Lemma~\ref{lem:local surjectivity} (1), there is a vertex $w \in V(\Lambda)$ and an edge $e \in E(\Lambda)$ adjacent to $w$, such that $\alpha(w)\in M_\Theta$ and
$\alpha(e^\circ)\subset M_{\Theta'''}$.

Case 1: $s'<s$\textit{.} If $\m(E)=\m(E')$, then assertion (1) is satisfied and we are done. Otherwise, we may assume, without loss of generality, that $\m(E)<\m(E')$. 

It follows from the balancing condition that  $u'$ lies below $F_s$, see Figure~\ref{fig:wc2}. We keep using Lemma~\ref{lem:local surjectivity}, to deduce that all curve types illustrated in Figure~\ref{fig:wc2} belong to the image of $\alpha$. Starting with the nice cone $M_{\Theta'''}$ and applying Lemma~\ref{lem:local surjectivity} (2), we conclude that there is a vertex of $\Lambda$ corresponding to the simple wall illustrated on the top right of Figure~\ref{fig:wc2} and parametrizing curves with a $4$-valent vertex $u'$ attached to $E,E',E'',$ and to the contracted leg corresponding to $q_j$. As before, by Lemma~\ref{lem:local surjectivity} (1), there exists an edge $e'\in E(\Lambda)$ parametrizing curves illustrated on the bottom left of Figure~\ref{fig:wc2}, and therefore, there also exists a vertex of $\Lambda$ corresponding to the simple wall illustrated on the bottom middle of Figure~\ref{fig:wc2} and parametrizing curves with a $4$-valent vertex $u'$ attached to $E,E',$ and the two edges on the floor $F_{s'}$. Finally, applying once again Lemma~\ref{lem:local surjectivity} (1), we conclude that the image of $\alpha$ contains curves $(\Gamma,h)$ of the desired type illustrated on the bottom right of Figure~\ref{fig:wc2}. 

The cycle in $(\Gamma, h)$ has vertical complexity $n(O)-1$ and is obtained from $O$ by replacing the elevators $\{E,E',E''\}$ in $\trop(C)$ with the elevator $E$ in $\Gamma$ (note that $E'$ and $E''$ together form a basic floor-to-floor elevator in $\trop(C)$, and thus contribute $1$ to the vertical complexity). This gives a contradiction, since we chose the curve $C$ with minimal vertical complexity in $V^{FD}$.

\tikzset{every picture/.style={line width=0.75pt}}
\begin{figure}[ht]	
\begin{tikzpicture}[x=0.4pt,y=0.4pt,yscale=-0.8,xscale=0.8]
\import{./}{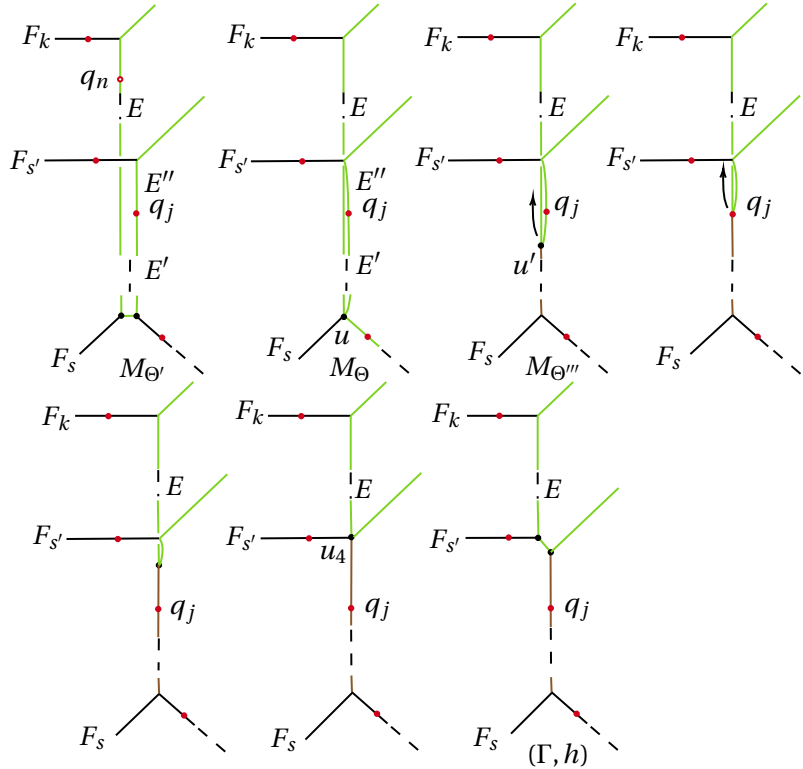}
\end{tikzpicture}		
\caption{Moving $E$ up through the elevators $E'$ and $E''$ (Case 2 in the proof of Lemma~\ref{lem:reduce to two elevators in a cycle}). The edges in the cycle $O$ are colored green.}
\label{fig:wc3}
\end{figure}
Case 2: $s'>s$\textit{.} If $s'=k$, then $\{E,E',E''\}$ induces a cycle of vertical complexity 2, which is assertion (2) and we are done. Otherwise, by a similar argument as in Case 1, we get a tropical curve in $\alpha(\Lambda)$ with vertical complexity less than $\trop(C)$. The only difference is that we are moving up $E$ along $E'$ and $E''$ instead of moving down. See Figure~\ref{fig:wc3} for an illustration. This provides a contradiction and we are done.
\end{proof}

\subsubsection{The proof of Theorem~\ref{thm:main thm}} \label{subsubsec: proof main thm}
We now complete the proof of Theorem~\ref{thm:main thm} by induction on $g$. The base of induction follows from the first assertion of the following Lemma.

\begin{lem}\label{lem:irreducibility in rational case}
Let $\Delta$ and $\underline d$ be as in Theorem \ref{thm:main thm}, and $[C]\in V^\irr_{0,\underline d,\Delta}$. Pick a parametrization of the normalization $\PP^1\simeq C^\nu$, and let $f\colon \mathbb P^1\rightarrow S_\Delta$ be the composition $\PP^1\to C^\nu\to C\to S_\Delta$. Then,
\begin{enumerate}
\item The Severi variety $V^\irr_{0,\underline d,\Delta}$ is non-empty and irreducible, regardless of the characteristic. Furthermore, if $[C]$ is general, then $f$ is injective on the preimage of the boundary of $S_\Delta$;
\item If $[C]$ is general and either $\mathrm{char}(K)=0$ or $\mathrm{char}(K)>\mathtt w(\Delta)$, then $f$ is locally an immersion. In particular, the singular locus of $C$ is contained in the maximal torus. 
\end{enumerate}
\end{lem}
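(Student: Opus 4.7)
The plan is to write down an explicit parametrization of parametrized rational curves with tangency profile $\underline d$, from which both assertions will follow. Fix a basis $e_1,e_2$ of $M$, and let $n_i$ denote the primitive outer normal to $\partial\Delta_i$. A parametrized rational curve $f\colon\PP^1\to S_\Delta$ with tangency profile $\underline d$ is uniquely determined, up to an automorphism of $\PP^1$, by the distinct preimages $u_{i,j}\in\PP^1$ of the toric boundary together with scalars $\lambda_1,\lambda_2\in K^\times$, via
\[
f^*(x^{e_k})\;=\;\lambda_k\prod_{i,j}(u-u_{i,j})^{-\langle e_k,n_i\rangle d_{i,j}},\qquad k=1,2,
\]
where the balancing relation $\sum_{i,j}d_{i,j}n_i=0$ ensures that these divisors have degree zero. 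Taking the quotient by the free $\mathrm{PGL}_2$-action on the tuple $(u_{i,j})$ yields a geometrically irreducible quasi-projective parameter space of dimension $|\underline d|-1$. The map sending this data to $f(\PP^1)\in|\mathscr L_\Delta|$ is surjective onto $V^\irr_{0,\underline d,\Delta}$, since every $[C]$ arises in this way from its normalization $\PP^1\simeq C^\nu$. This proves that $V^\irr_{0,\underline d,\Delta}$ is non-empty and irreducible, of the dimension predicted by Proposition~\ref{prop:tangency}.

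For the injectivity statement in (1): when $i\neq i'$, the images $f(p_{i,j})\in D_i$ and $f(p_{i',j'})\in D_{i'}$ cannot coincide without hitting $D_i\cap D_{i'}$, which consists of zero-dimensional orbits excluded by toric transversality; and when $i=i'$, the coincidence $f(p_{i,j})=f(p_{i,j'})$ is a proper closed condition on the parameter space, so the generic $[C]$ has $f$ injective on $f^{-1}(\partial S_\Delta)$.

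For part (2), the function $f^*(x^{(0,1)})$ has degree equal to $\mathtt w(\Delta)$ (being the intersection of $C$ with a generic horizontal fiber), so the assumption $\mathrm{char}(K)=0$ or $\mathrm{char}(K)>\mathtt w(\Delta)$ forces $f^*(x^{(0,1)})\notin K(u^{\mathrm{char}(K)})$; hence $f^*(x^{(0,1)})$ is separable, and combined with the birationality of $f$ onto its image, this ensures $f$ is generically unramified. To promote generic non-vanishing of $df$ to non-vanishing at every point of $\PP^1$ I would argue as follows. Away from the $u_{i,j}$'s, $df(p)\neq 0$ is equivalent to non-vanishing of the meromorphic $N\otimes K$-valued $1$-form $d\log f$, whose two scalar components in a basis of $N$ have disjoint zero sets for the general $[C]$. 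At a marked point $u_{i,j}$ with $d_{i,j}=1$ the immersion property is automatic; while for $d_{i,j}>1$, local analysis in toric coordinates adapted to $D_i$ reduces it to the non-vanishing of $(f^*(x^m))'(u_{i,j})$ for $m\in M$ orthogonal to $n_i$, which is again a generic condition made available by the characteristic hypothesis. The final assertion that the singular locus of $C$ lies in the maximal torus then follows immediately from (1) and the immersion property: singularities of $C=f(\PP^1)$ can only arise from pairs of interior preimages of $f$, whose images lie in the dense torus.

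The main obstacle lies in part (2), where one has to control the non-immersion locus at the marked points $u_{i,j}$ with $d_{i,j}>1$ in positive characteristic; the hypothesis $\mathrm{char}(K)>\mathtt w(\Delta)$ is precisely what rules out the structural failure in which the ``along-$D_i$'' derivatives vanish identically across the parameter space, a phenomenon specific to positive characteristic.
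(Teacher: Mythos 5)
Your proposal uses the same overall approach as the paper -- write the explicit pullback $f^*(x^m)=\chi(m)\prod_{i,j}(t-c_{i,j})^{d_{i,j}(n_i,m)}$ of the monomials, deduce irreducibility from the dominating affine parameter space, and analyse the logarithmic derivative to control the differential of $f$. However, it has gaps precisely where the actual work lies.

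For part (1), you assert that the coincidence $f(p_{i,j})=f(p_{i,j'})$ (same $i$) is a \emph{proper} closed condition without showing that it is not the whole parameter space. This needs an argument: the paper takes $m_k\in M$ a primitive vector along the side $\partial\Delta_k$, so that $f^*(x^{m_k})$ is a nontrivial rational function independent of the parameters $c_{k,\bullet}$, and therefore for generic $c_{k,j_1},c_{k,j_2}$ its values at these two points differ. Without some such device it is not clear that the condition you call ``proper closed'' is in fact proper.

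For part (2), you observe that $f^*(x^{(0,1)})$ is separable since its degree is less than $\mathrm{char}(K)$, and then state that the two scalar components of $d\log f$ ``have disjoint zero sets for the general $[C]$,'' and that non-vanishing of the tangential derivative at a tangency point $u_{i,j}$ ``is again a generic condition made available by the characteristic hypothesis.'' Both statements are exactly what has to be proved, and you do not identify the mechanism. The paper shows that $\partial_{m_k}\log(f)$ is a non-constant rational function in $t$ that is \emph{independent of $c_{k,\bullet}$}: non-constancy requires finding a side $\partial\Delta_l$ with $0<|(n_l,m_k)|\le\mathtt{w}(\Delta)$, and it is exactly this lattice-geometric bound -- proved via the $h$-transversality of $\Delta$ (the relevant lattice triangle has height $1$ and a horizontal edge, hence area $\le\mathtt{w}(\Delta)/2$) -- that guarantees the coefficient $d_{l,j}(n_l,m_k)$ survives in characteristic $>\mathtt{w}(\Delta)$. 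The independence from $c_{k,\bullet}$ is then what lets one conclude that $t_0=c_{k,j}$ is generically not a zero and, combined with one more $m'_k$ with $(n_k,m'_k)=1$, that there is no common zero in the torus. Your remark about separability of $f^*(x^{(0,1)})$ is true but does not by itself yield these pointwise statements, nor does it identify which side of $\Delta$ contributes the decisive term. In short, the strategy is right, but the proof of genericity and the geometric input that makes $\mathtt{w}(\Delta)$ the correct threshold are missing and cannot be omitted.
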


\begin{proof}
(1) Let $D_i\subset S_\Delta$ be the irreducible toric divisors for $1\leq i\leq s$, and $n_i\in N$ the primitive inner normal vectors to the corresponding sides $\partial\Delta_i$ of $\Delta$. Let $p_{i,j}\in \PP^1$ be the points such that $f^*(D_i)=\sum_{1\le j\le a_i}d_{i,j}p_{i,j}$. Without loss of generality, we may assume that $p_{i,j}\ne\infty$ for all $i$ and $j$. Let $t$ be the coordinate on $\PP^1\setminus \{\infty\}$, and set $c_{i,j}:=t(p_{i,j})$. Since $C$ is torically transverse of tangency profile $\underline d$, it follows that the pullback under $f$ of the monomial functions is given by
$$f^*(x^m)=\chi(m)\prod_{i,j}(t-c_{i,j})^{d_{i,j}(n_i,m)},$$
where $(\cdot,\cdot)$ is the pairing $M \times N \to \ZZ$ on the dual lattices and $\chi\:M\to K^\times$ is a character. 

Vice versa, for any choice of a character $\chi$ and distinct points $p_{i,j}$ in $\PP^1\setminus \{\infty\}$, the formula above defines a morphism $f\colon \PP^1\rightarrow S_\Delta$, whose image is torically transverse. It remains to show that for a general choice of the $p_{i,j}$'s, the map $f\colon \PP^1\rightarrow S_\Delta$ is injective on the preimage of the boundary of $S_\Delta$. Indeed, by the assumption, the tangency profile $\underline d$ is trivial on the non-horizontal sides of $\Delta$. Therefore, the injectivity of $f$ over the boundary of $S_\Delta$ implies that $f$ is birational onto its image, and hence the image of $f$ belongs to $V^\irr_{0,\underline d,\Delta}$. Thus, $V^\irr_{0,\underline d,\Delta}$ is non-empty and is dominated by an open subset of ${\rm Hom}(M,K^\times)\times (\mathbb P^1)^{|\underline d|}$. The irreducibility now follows.

Let us show the asserted injectivity. Since $f^*(D_k)=\sum_{1\le j\le a_k}d_{k,j}p_{k,j}$, it is sufficient to show that for each $k$ and each $j_1\ne j_2$, there exists $m\in M$ such that $f^*(x^m)(c_{k,j_1})\ne f^*(x^m)(c_{k,j_2})$. Let $m_k\in M$ be a primitive integral vector along the side $\partial \Delta_k$. Then $(n_k,m_k)=0$, and hence the function $f^*(x^{m_k})$ is a non-trivial rational function in $t$ that does not depend on the parameters $c_{k,j_1}$ and $c_{k,j_2}$. Therefore, for a general choice of $c_{k,j}$'s, $f^*(x^m)(c_{k,j_1})\ne f^*(x^m)(c_{k,j_2})$ as claimed.

(2) To prove that $f$ is locally an immersion, we shall analyze the differential of $f$. For any $m\in M$, set
\begin{equation}\label{eq:derivative}
\partial_m\log(f):=\frac{1}{f^*(x^m)}\frac{d(f^*(x^m))}{dt}=\sum_{1\leq i\leq s}\sum_{1\leq j\leq a_i}\frac{d_{i,j}\cdot( n_i,m)}{t-c_{i,j}}.
\end{equation}
to be the log-derivative of $f^*(x^m)$. 

Let us first show that for each $k$, the log-derivative $\partial_{m_k}\log(f)$ is a non-constant rational function in $t$ that does not depend on the $c_{k,j}$'s. Indeed, since $( n_k,m_k)=0$, the  function $\partial_{m_k}\log(f)$ contains no term(s) with $\{c_{k,j}\}_{1\leq j\leq a_k}$. On the other hand, we claim that we can pick a side $\partial\Delta_l$ such that $0<|( n_l,m_k)|\leq \mathrm{w}(\Delta)$. If this is true, then by the assumptions on the characteristic of $K$, we get in particular, that $(n_l,m_k)$ is not divisible by $\mathrm{char}(K)$, and neither is $d_{l,j}$. It follows that $\partial_{m_k}\log(f)$ contains a non-trivial summand $\frac{d_{l,j}\cdot ( n_l,m_k)}{t-c_{l,j}}$ in expansion~(\ref{eq:derivative}) for any $1\leq j\leq a_l$. Thus, $\partial_{m_k}\log(f)$ depends non-trivially on the $c_{l,j}$'s, and hence, for a general choice of the $c_{i,j}$'s, it is a non-constant rational function in $t$.

Let us now explain how to choose the side $\partial\Delta_l$ as above.
If $\partial\Delta_k$ is horizontal, then any non-horizontal side works. Otherwise,
let $y=a$ be a horizontal line in $M_\mathbb R$ with non-integral $y$-coordinate that intersects $\partial\Delta_k$. Let $\partial\Delta_{k'}$ be the other side of $\Delta$ that intersects this line. If $\partial\Delta_{k'}$ is parallel to $\partial\Delta_k$, let $\partial\Delta_l$ be a side of $\Delta$ that is adjacent to $\partial\Delta_k$; otherwise let $\partial\Delta_l:=\partial\Delta_{k'}$. Then $|(n_l,m_k)|$ is twice the area of the lattice triangle with vertices $(0,0),m_k$ and $m_l$, which is non-zero in each case. Furthermore, since $\Delta$ is $h$-transverse, this lattice triangle has a horizontal edge, it is of height $1$, and can be embedded into $\Delta$ via a translation; hence its area does not exceed $\mathtt w(\Delta)/2$. Therefore,  $|(n_l,m_k)|\leq \mathtt w(\Delta)$. This justifies our choice of $\partial\Delta_l$.

Finally, we are ready to prove that $f$ is locally an immersion. Let $t_0\in \mathbb P^1$ be any point. We shall show that there exists an $m\in M$ such that  $\partial_m\log(f)(t_0)\neq 0$. Assume first, that $t_0=c_{k,j}$. Since the $c_{i,j}$'s are general and the function $\partial_{m_k}\log(f)$ is non-constant and does not depend on $c_{k,j}$, it follows that $t=c_{k,j}$ is not a zero of $\partial_{m_k}\log(f)$ as needed. Assume next, that $t_0\ne c_{i,j}$ for all $i$ and $j$. In particular, $f^*(x^m)$ is defined at $t_0$, and $f^*(x^m)(t_0)\neq 0$ for all $m\in M$. Pick $1\le k\le s$, and let $m'_k\in M$ be such that $(n_k,m'_k)=1$. We will show that either $\partial_{m_k}\log(f)(t_0)\neq 0$ or $\partial_{m'_k}\log(f)(t_0)\neq 0$. We have seen above that $\partial_{m_k}\log(f)$ is not identically zero, and does not depend on the $c_{k,j}$'s. Thus, the zero set $Z\subset \mathbb P^1$ of $\partial_{m_k}\log(f)$ is finite and is independent of the choice of the $c_{k,j}$'s. On the other hand, the log derivative $\partial_{m'_k}\log(f)$ contains a non-trivial summand
$\frac{d_{k,j}\cdot( n_k,m'_k)}{t-c_{k,j}}$ for all $1\leq j\leq a_k$. Thus, no point of $Z$ is a zero of $\partial_{m'_k}\log(f)$ since the $c_{k,j}$'s are general. Hence either $\partial_{m_k}\log(f)(t_0)\neq 0$ or $\partial_{m'_k}\log(f)(t_0)\neq 0$, and we are done.
\end{proof}

\begin{proof}[Proof of Theorem~\ref{thm:main thm}]
Recall that we assume that $K$ is the algebraic closure of a complete DVR whose residue field $\widetilde K$ is algebraically closed and has characteristic either $0$ or greater than $\mathtt w(\Delta)/2$. As mentioned, we prove Theorem~\ref{thm:main thm} by induction on $g$. The base case $g = 0$ follows from Lemma~\ref{lem:irreducibility in rational case} (1), and we can assume $g>0$ in the sequel.
By the inductive hypothesis, it is enough to show that each irreducible component $V$ of $\overline V^\irr_{g,\underline d,\Delta}$ contains an irreducible component of $\overline V^\irr_{g-1,\underline d,\Delta}$, which further reduces to showing that the locus of curves of geometric genus $g-1$ in $V$ has dimension $n-1=|\underline d|+g-2$, the dimension of $\overline V^\irr_{g-1,\underline d,\Delta}$ by Proposition~\ref{prop:tangency}.

Let $[C] \in V^{FD}$ be a curve as in Lemma~\ref{lem:reduce to two elevators in a cycle}. As in Step 1 of the proof of Lemma~\ref{lem:reduce to two elevators in a cycle}, after replacing $[C]$ with another curve in $V^{FD}$, we can find vertically stretched points $q_1,\dotsc,q_n\in h_C(\Gamma_C)$ that are in general position with respect to $\trop(C)$; here we view $\trop(C)$ as an element of $M^\trop_{g,n,\nabla}$ by adding contracted legs over the $q_i$, as before. Pick $p_1,\dotsc,p_n\in C$ such that $\trop(p_i)=q_i$. Then $p_1,\dotsc,p_n$ are in general position with respect to $C$ by Remark~\ref{rem:local general position}. Let $E$ and $E'$ be the elevators of $\trop(C)$ as in Lemma~\ref{lem:reduce to two elevators in a cycle}, and assume without loss of generality that $q_n\in h_C(E)$.

Let $h\:\Gamma_\Lambda \to N_\RR$ be the family of parametrized tropical curves over $\Lambda$ obtained from Construction~\ref{constr:tropical family} for the points $p_1, \dotsc, p_{n-1}$ and the curve $C$. Let $\alpha\colon \Lambda\rightarrow M^\trop_{g,n-1,\nabla}$ be the induced map. Recall that this family is obtained from tropicalizing a family of parametrized curves $f\colon \mathscr X\rightarrow S_\Delta$ over a base $(B,\tau_\bullet)$, where $B$ admits a finite cover to a component $Z\ni [C]$ of the locus in $V$ of curves passing through $p_1,\dotsc,p_{n-1}$. To prove the theorem, it suffices to show that there is a $\tau\in B(K)$ such that $\left[f(\mathscr X_\tau)\right]\in V$ is an integral curve of geometric genus $g-1$, since varying the $p_1,\dotsc,p_{n-1}$ then produces an $n-1$ dimensional locus of such curves in $V$.

By the definition of a family of parametrized tropical curves (cf. \S~\ref{subsec: moduli and families of parametrized tropical curves}), for any edge $e$ of $\Lambda$ the underlying graph of the fiber of $\Gamma_\Lambda$ over any point in the interior $e^\circ$ of $e$ is a fixed graph $\GG_e$.

 \begin{cl}\label{cl:develop a contracted edge}
$\Lambda$ contains a leg $l$ such that the map $h$ is constant on all vertices of $\GG_l$ along $l$ and one of the following holds:
\begin{enumerate}
    \item \label{cl:develop a contracted edge1} The lengths of all but one edge $\gamma$ of $\GG_l$ are constant in the family $\Gamma_\Lambda$, and the graph obtained from $\GG_l$ by removing $\gamma$ is connected; or
    \item \label{cl:develop a contracted edge2} the length of all edges of $\GG_l$ is constant in the family $\Gamma_\Lambda$, except for at least one edge of a contracted elliptic tail $O$.
\end{enumerate}

\end{cl}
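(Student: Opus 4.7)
My plan is to walk along $\alpha(\Lambda)$, starting at the vertex corresponding to $[\trop(C)]$, using the local liftability results of \S\ref{sec:liftability}, until reaching a combinatorial type containing either a contracted edge in a cycle or a contracted elliptic tail, and then extend along a leg of $\Lambda$ on which the length of this contracted structure grows without bound. Let $\Theta'$ denote the combinatorial type of $\trop(C)$, a nice stratum. Since the point $q_n \in h_C(E)$ is not imposed as a constraint, Lemma~\ref{lem:local surjectivity}(2) yields $M_{\Theta'}\cap\ev_{\Theta'}^{-1}(q_1,\dotsc,q_{n-1}) \subset \alpha(\Lambda)$, a $1$-dimensional interval parametrizing horizontal translations of $E$.

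First I would move $E$ toward $E'$ along this interval, traversing intermediate simple walls via Lemma~\ref{lem:local surjectivity}(1) together with Lemma~\ref{lem:alpha at simple walls and nice strata}, exactly as in the proof of Lemma~\ref{lem:reduce to two elevators in a cycle}, until $E$ and $E'$ collide. In Case~(2) of Lemma~\ref{lem:reduce to two elevators in a cycle}, this collision creates a flattened cycle in the sense of Definition~\ref{defn:flattened cycle}, and Lemma~\ref{lem:alpha at flattened cycle} provides an edge $e \in E(\Lambda)$ whose interior maps to a $3$-valent refinement containing a contracted edge $\gamma$ lying in a cycle of the underlying graph. In Case~(1) of Lemma~\ref{lem:reduce to two elevators in a cycle}, the collision occurs at a simple wall where balancing forces the splitting grouping $\{E,E'\}$ together to produce a contracted edge; iterating Lemma~\ref{lem:alpha at simple walls and nice strata} and Lemma~\ref{lem:alpha at weight one vertex} on the subsequent walks then yields an edge of $\Lambda$ whose combinatorial type carries either a contracted edge in a cycle (case~(\ref{cl:develop a contracted edge1})) or a contracted elliptic tail (case~(\ref{cl:develop a contracted edge2})).

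Along this edge $e$, the length $\ell(\gamma)$ is affine linear in the parameter. I would walk in the direction of growing $\ell(\gamma)$: because $\gamma$ is contracted by $h$, lengthening it affects neither the image $h(\Gamma_\Lambda)$ in $N_\RR$ nor the point constraints, so at every subsequent vertex of $\Lambda$ either harmonicity (Remark~\ref{rem: harmonic when weightless trivalent}) or local combinatorial surjectivity (Lemma~\ref{lem:alpha at simple walls and nice strata} or Lemma~\ref{lem:alpha at weight one vertex}) allows the walk to continue in that direction. Since $\Lambda$ is a finite tropical curve, the walk must terminate in a leg $l$, along which $\ell(\gamma) \to \infty$ and the combinatorial type is stable. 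On $l$, the point constraints $q_1,\dotsc,q_{n-1}$ combined with $\gamma$ being contracted force $h$ to be constant on all vertices of $\GG_l$ and the lengths of all edges except $\gamma$ (or the edges of the contracted elliptic tail) to be constant, yielding the claim. The main obstacle lies in the last step: rigorously tracking the walk at each branch of $\Lambda$ and verifying that one can always continue in the $\ell(\gamma)$-growing direction, particularly through configurations where the contracted structure has absorbed into a weight-one vertex and one must invoke Lemma~\ref{lem:alpha at weight one vertex} -- this is precisely where the characteristic hypothesis $\mathrm{char}(K) > \mathtt{w}(\Delta)/2$ enters the argument, via the multiplicity bound of Lemma~\ref{lem:max stretching factor}.
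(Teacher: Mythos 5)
Your high-level strategy -- move $E$ toward $E'$ along $\alpha(\Lambda)$, pass through walls using local liftability, reach a combinatorial type with a contracted structure, then follow a ray of growing length -- does match the paper's plan. However, there are substantive gaps in how you handle the two cases of Lemma~\ref{lem:reduce to two elevators in a cycle}, and you have effectively swapped their roles.

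In \emph{Case~(1)} (where $\frac{\partial h_C}{\partial \vec E} = -\frac{\partial h_C}{\partial \vec E'}$), the paper's argument is short: moving $E$ onto $E'$ produces a simple wall with a single $4$-valent vertex; the splitting keeping $E,E'$ adjacent produces a contracted edge $\gamma$ in a cycle immediately (because $\m(E)=\m(E')$, balancing forces the third edge to have slope zero), and then the constrained locus $M_{\Theta'''}\cap \ev_{\Theta'''}^{-1}(q_1,\dotsc,q_{n-1})$ is an unbounded ray, contained in $\alpha(\Lambda)$ by Lemma~\ref{lem:local surjectivity}(2). There is no need for Lemma~\ref{lem:alpha at weight one vertex} here, and no contracted elliptic tail arises in this case. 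Your plan to ``iterate Lemma~\ref{lem:alpha at simple walls and nice strata} and Lemma~\ref{lem:alpha at weight one vertex}'' after the collision in Case~(1) is not needed and does not match the actual structure of the argument.

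In \emph{Case~(2)} (vertical complexity $2$), your assertion that ``this collision creates a flattened cycle \dots and Lemma~\ref{lem:alpha at flattened cycle} provides an edge whose interior maps to a $3$-valent refinement containing a contracted edge $\gamma$ lying in a cycle'' is wrong: the $3$-valent refinement produced by Lemma~\ref{lem:alpha at flattened cycle} still contains a flattened cycle (a \emph{smaller} one), not a contracted edge. To get from the flattened cycle to a contracted loop or a contracted elliptic tail, one must carry out a multi-stage shrinking: at each stage one applies Proposition~\ref{prop:weight one midpoint} to prove that the realizable locus inside $M_{\Theta'''}\cap\ev_{\Theta'''}^{-1}(q_1,\dotsc,q_{n-1})$ (which is genuinely $2$-dimensional) is the $1$-dimensional line segment cut out by $\ell(E'_1)=\ell(E''_1)$ (and later $\ell(E'_1)=\ell(E_2)+\ell(E''_1)$). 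This dimension reduction is essential: without it $\alpha(\Lambda)$, which is $1$-dimensional, cannot be shown to fill out the relevant locus, and the ray argument fails. You omit Proposition~\ref{prop:weight one midpoint} entirely from your sketch. The shrinking continues through the $4$-valent vertex $u_2$ (obtained when the lower endpoint of the flattened cycle reaches $q_j$) and only terminates at a $2$-valent weight-one vertex $u_3$. Only then does the dichotomy of Lemma~\ref{lem:alpha at weight one vertex} -- namely whether $\m(E_1')$ is divisible by $\mathrm{char}(\widetilde K)$ or equals a power of it, controlled by the bound from Lemma~\ref{lem:max stretching factor} -- produce a contracted loop (giving alternative~(\ref{cl:develop a contracted edge1})) or a contracted elliptic tail (giving alternative~(\ref{cl:develop a contracted edge2})). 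So the contracted elliptic tail only arises in Case~(2), not Case~(1), and only after the shrinking process you do not describe.

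Your final step, ``walk in the direction of growing $\ell(\gamma)$ until terminating in a leg,'' is morally correct but under-justified as stated: it is not automatic that one can always continue in the growing direction, and the paper proves this by checking harmonicity (Remark~\ref{rem: harmonic when weightless trivalent} and Lemma~\ref{lem:alpha at weight one vertex}(3)) to show that the entire ray $L_{\Upsilon'}$, $L_{\Upsilon''}$, or $L_{\Upsilon'''}$ is contained in $\alpha(\Lambda)$, which forces the existence of a leg of $\Lambda$ mapped unboundedly. You correctly identify where the characteristic hypothesis and Lemma~\ref{lem:max stretching factor} enter, but the middle of the argument -- the iterated shrinking of the flattened cycle constrained by well-spacedness -- is the technical heart of Case~(2) and is missing from your proposal.
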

Let $l$ be as in Claim~\ref{cl:develop a contracted edge}, and $\tau\in B(K)$ the marked point corresponding to $l$. We will show that $\tau$ is the desired point, namely that  $\left[f(\mathscr X_\tau)\right]\in V$ is an integral curve of geometric genus $g-1$. 
Set $D_{\cX^0}:=\widetilde \cX \cup \left ( \bigcup_i \cX_{\tau_i} \right) \cup \left( \bigcup_j \sigma_j \right)$, where $\tau_i$ are the marked points of $B$ and $\sigma_j$ are the marked points of $\cX \to B$. First, let us show that $f$ is defined on $\cX_\tau$.

To see that the rational map $f$ is defined at the generic points of $\cX_\tau$, consider the pullbacks $f^*(x^m)$ for $m\in M$. Let $u$ be a vertex of $\GG_l$ such that the corresponding component of $\cX_{\widetilde \tau}$ belongs to the closure of a given generic point $\eta\in\cX_\tau$. By Remark~\ref{rem:slopes}, the order of pole of $f^*(x^m)$ at $\eta$ is equal to the slope $\frac{\partial h(u)(m)}{\partial \vec l}$, which has to vanish since $h(u)$ is constant along $l$. Therefore, $f$ is defined at the generic points of $\cX_\tau$ and maps them to the dense orbit of $S_\Delta$. Next, let us show that $f$ is defined on all of $\cX_\tau$. Notice that $f^*(x^m)$ is regular and invertible away from $D_{\cX^0}$, and it is regular and invertible in codimension one on $\cX_\tau$. Since $\cX^0$ is normal,  it follows that $f$ is defined on $\cX_\tau\setminus \left( \bigcup_j \sigma_j \right)$. Pick any $\sigma_j$, and let us show that $f$ is defined at $\sigma_j(\tau)\in \cX_\tau$, too. Let $S_j\subseteq S_\Delta$ be the affine toric variety consisting of the dense torus orbit and the orbit of codimension at most one containing the image of $\sigma_j(b)$ for a general $b\in B$. Then the pullback of any regular monomial function $x^m\in \mathcal O_{S_j}(S_j)$ is regular in codimension one in a neighborhood of $\sigma_j(\tau)$, and hence regular at $\sigma_j(\tau)$ by normality of $\cX$. Thus, $f$ is defined at $\sigma_j(\tau)$, and maps it to $S_j$.

To complete the proof, assume that assertion (2) of Claim ~\ref{cl:develop a contracted edge} is satisfied. The case of assertion (1) can be treated in a similar way. Let $\gamma \in E(\mathbb G_l)$ be an edge that corresponds to a node $z \in  \cX_{\widetilde\tau}$. Then $\cX^0$ is given \'etale locally near $z$ by $xy = g_z$ for some regular function $g_z$ defined on a neighbourhood of $\widetilde \tau\in B^0$. If the length of $\gamma$ varies over $l$, then $g_z$ vanishes along $\tau$ by Remark~\ref{rem:slopes}, and thus $z$ does not get smoothed in $\mathscr X_\tau$. 

If the only edge of varying length in $O$ is a loop, then $\mathscr X_\tau$ is irreducible, since $\mathbb G_l$ is still connected even if we remove this loop. By the argument above, $f$ is defined on $\cX_\tau$ and maps its generic point to the dense orbit. Moreover, the image of $\mathscr X_\tau$ intersects the boundary divisor at $|\underline d|$ distinct points, and some of them are points of simple intersection by the assumption on $\underline d$. Thus, $f|_{\cX_\tau}$ is birational onto its image, and $\left[f(\cX_\tau)\right]\in V$ is an integral curve of geometric genus $g-1$. 
Assume next that $O$ contains a non-loop edge of varying length. Then $\cX_\tau$ is a union of a smooth irreducible component of genus $g-1$ and an irreducible component of arithmetic genus $1$, and without marked points. Denote the components by $\cX_\tau'$ and $E$, respectively. Then $f(E)$ is disjoint from the toric boundary, and therefore $f$ contracts $E$ to a point in the dense orbit of $S_\Delta$. In particular, $f(\cX_\tau)=f(\cX_\tau')$ is irreducible. On the other hand, we get arguing as above, that $f|_{\cX_\tau'}$ is birational onto its image. Thus, the normalization of $f(\cX_\tau)$ is equal to $\cX_{\tau}'$, and $\left[f(\cX_\tau)\right]\in V$ is an integral curve of geometric genus $g-1$.
\end{proof}

\begin{proof}[Proof of Claim~\ref{cl:develop a contracted edge}]
Recall that $C$ satisfies the assertion of Lemma~\ref{lem:reduce to two elevators in a cycle}. The points $q_1,\dotsc,q_n$ are in general position with respect to $\trop(C)=(\Gamma_C,h_C)$ and vertically stretched. The points  $p_1,\dotsc,p_n\in C$ are points such that $\trop(p_i)=q_i$. The edges $E$ and $E'$ are the elevators of $\trop(C)$ contained in a cycle $O$ and adjacent to a floor $F$ as in Lemma~\ref{lem:reduce to two elevators in a cycle}, and $q_n\in h_C(E)$.  Moreover, $h\:\Gamma_\Lambda \to N_\RR$ is the family of parametrized tropical curves obtained from Construction~\ref{constr:tropical family} for $p_1,\dotsc, p_{n-1}$, and $\alpha\colon \Lambda\rightarrow M^\trop_{g,n-1,\nabla}$ the induced map.

As before, we may consider $\trop(C)$ as a curve in the family $h\:\Gamma_\Lambda \to N_\RR$ by marking the preimages of $q_1,\dotsc,q_{n-1}$ in $\Gamma_C$. Then $E$ contains no marked point and $E'$ contains one marked point which we denote by $q_j$. 
As in the proof of Lemma~\ref{lem:reduce to two elevators in a cycle}, we may assume that on the floors adjacent to $E$ there is no special point that has $x$-coordinate between those of $E$ and $E'$. Let $M_{\Theta'}\subset M^\trop_{g,n-1,\nabla}$ be the nice stratum that contains $\trop(C)$. According to the two conditions of Lemma~\ref{lem:reduce to two elevators in a cycle}, there are two cases to consider.

Case 1: \textit{$\frac{\partial h_C}{\partial\vec E}=-\frac{\partial h_C}{\partial\vec E'}$, where $\vec E$ and $\vec E'$ are oriented away from $F$.} As in the proof of Lemma~\ref{lem:reduce to two elevators in a cycle}, we move $E$ towards $E'$ till we hit a simple wall $M_\Theta$ that parametrizes curves $(\Gamma,h)$ containing a $4$-valent vertex $u$ adjacent to both $E$ and $E'$. See Figure~\ref{fig:wc4} for an illustration. As before, all parametrized tropical curves along this deformation belong to $\alpha(\Lambda)$, and hence $[(\Gamma,h)]=\alpha(w)$ for a vertex $w$ of $\Lambda$. Furthermore, by Lemma~\ref{lem:local surjectivity} (1), we may assume that $\alpha$ is locally combinatorially surjective at $w$. Let $M_{\Theta'''}$ be the nice stratum in the star of $M_\Theta$ in which $E$ and $E'$ are adjacent to the same vertex $u'$ of the splitting of $u$ into two $3$-valent vertices.
Let $e\in \Star(w)$ be an edge such that $\alpha(e^\circ)\subset M_{\Theta'''}$. Since $\m(E)=\m(E')$, it follows from the balancing condition that the third edge adjacent to $u'$ gets contracted by the parametrization map. Thus, the locus $M_{\Theta'''}\cap\ev_{\Theta'''}^{-1}(q_1,\dotsc,q_{n-1})$ 
is an {\em unbounded} interval. Furthermore, the images of all curves in this locus agree with $h(\Gamma)$. 

\tikzset{every picture/.style={line width=0.75pt}}
\begin{figure}[ht]	
\begin{tikzpicture}[x=0.5pt,y=0.5pt,yscale=-0.9,xscale=0.9]
\import{./}{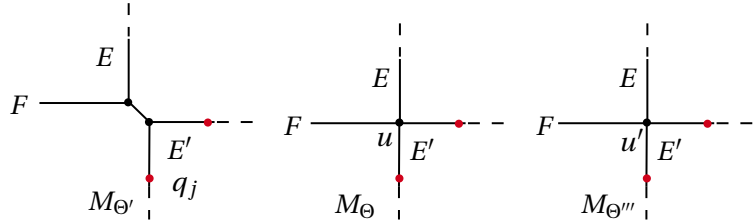}
\end{tikzpicture}		
\caption{Developing a contracted edge in Case 1 of the proof of Claim~\ref{cl:develop a contracted edge}.}
\label{fig:wc4}
\end{figure}

By Lemma~\ref{lem:local surjectivity} (2),  $M_{\Theta'''}\cap\ev_{\Theta'''}^{-1}(q_1,\dotsc,q_{n-1})=\alpha(\Lambda)\cap M_{\Theta'''}$, and thus there exists a leg $l$ of $\Lambda$ such that $\alpha(l)\subset M_{\Theta'''}\cap\ev_{\Theta'''}^{-1}(q_1,\dotsc,q_{n-1})$ is not bounded. The leg $l$ satisfies the assertion of the claim: the graph $\GG_l$ is the underlying graph of $\Theta'''$, and the 
edge $\gamma$ in the claim is just the contracted edge adjacent to $u'$ as above. The connectivity of $\GG_l\backslash \gamma$ follows from the fact that $E$ and $E'$ are contained in the same cycle, and hence assertion \eqref{cl:develop a contracted edge1} of Claim~\ref{cl:develop a contracted edge} is satisfied.

Case 2: \textit{$O$ has vertical complexity 2.} See left picture in Figure~\ref{fig:wc5}.
\tikzset{every picture/.style={line width=0.75pt}}
\begin{figure}[ht]	
\begin{tikzpicture}[x=0.5pt,y=0.5pt,yscale=-0.9,xscale=0.9]
\import{./}{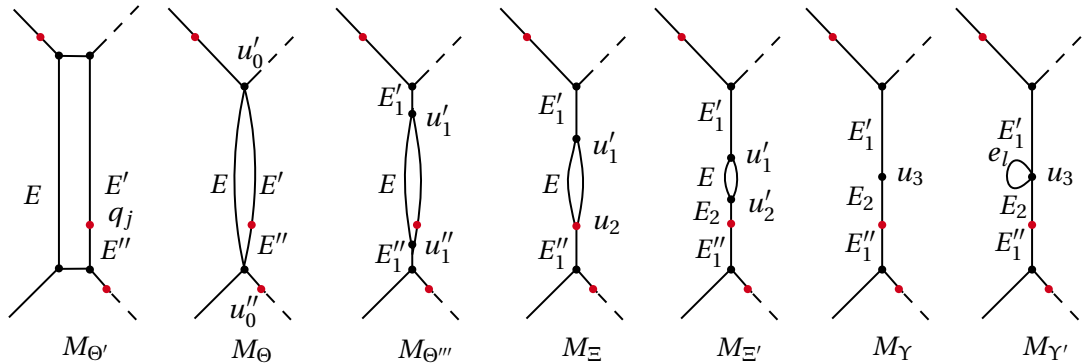}
\end{tikzpicture}		
\caption{Developing a contracted loop in Case 2 of the proof of Claim~\ref{cl:develop a contracted edge}.}
\label{fig:wc5}
\end{figure}
We may assume that $E'$ lies above $q_j$. Let $E''$ be the other elevator that contains $q_j$. 
By moving $E$ towards $E'$ as before, we find a boundary point $(\Gamma',h')$ of $M_{\Theta'}\cap \ev_{\Theta'}^{-1}(q_1,\dotsc,q_{n-1})$ that is contained in a stratum $M_\Theta$. In this case, $M_\Theta$ is not a simple wall. Instead, $(\Gamma',h')$ is weightless and $3$-valent except for two $4$-valent vertices $u'_0$ and $u''_0$, with $u'_0$ adjacent to $E$ and $E'$, and $u''_0$ adjacent to $E$ and $E''$; in other words, $E, E', E''$ form a flattened cycle between $u'_0$ and $u''_0$. By Lemma~\ref{lem:local surjectivity} (2), $\alpha(\Lambda)$ contains $M_{\Theta'}\cap \ev_{\Theta'}^{-1}(q_1,\dotsc,q_{n-1})$, hence also contains $(\Gamma',h')$.

We keep deforming the obtained tropical curve by shrinking the flattened cycle, cf. Figure~\ref{fig:wc5}: let $M_{\Theta'''}$ be the stratum in the star of $M_{\Theta}$ in which both $u'_0$ and $u''_0$ split into two $3$-valent vertices, such that each pair of elevators adjacent to $u'_0$ (resp., $u''_0$) remains adjacent to vertices $u'_1$ (resp., $u''_1$); that is, the flattened cycle is preserved. Denote the new edges adjacent to $u'_1$ and $u''_1$ by $E'_1$ and $E''_1$, respectively. Then $\alpha(\Lambda)\cap M_{\Theta'''}$ is non-empty by  Lemma~\ref{lem:alpha at flattened cycle}.
Note that, unlike before, $M_{\Theta'''}\cap \ev_{\Theta'''}^{-1}(q_1,\dotsc,q_{n-1})$ has dimension $2$: one can perturb the images of $u'_1$ and $u''_1$ independently. Nevertheless, the locus of realizable curves in $M_{\Theta'''}\cap \ev_{\Theta'''}^{-1}(q_1,\dotsc,q_{n-1})$ is one-dimensional by Proposition~\ref{prop:weight one midpoint}; more precisely, we get that $\alpha(\Lambda)\cap M_{\Theta'''}$ is contained in the sublocus $L_{\Theta'''}$ of $M_{\Theta'''}\cap \ev_{\Theta'''}^{-1}(q_1,\dotsc,q_{n-1})$ consisting of curves such that $\ell(E'_1)=\ell(E''_1)$, where $\ell(\bullet)$ denotes the edge lengths.  Plainly, $L_{\Theta'''}$ is a line segment in $M_{\Theta'''}$. Since the underlying graph of $\Theta'''$ is weightless and $3$-valent, $\alpha$ is harmonic at vertices of $\Lambda$ that are mapped to $M_{\Theta'''}$ by Remark~\ref{rem: harmonic when weightless trivalent}.
It follows that $\alpha(\Lambda)$ contains $L_{\Theta'''}$.

By the balancing condition, we have $\m(E'_1)=\m(E''_1)$. This means that, along $L_{\Theta'''}$, the images of $u'_1$ and $u''_1$ are moving (one downwards and the other upwards) with the same speed. We may assume that $q_j$ lies strictly below the center of the flattened cycle. When $u_1''$ reaches $q_j$, we obtain a boundary point of $L_{\Theta'''}$ that is not contained in $M_{\Theta'''}$; namely, a curve which contains a $4$-valent vertex $u_2$ with the following adjacent edges/legs: $E''_1$, the leg $l_j$ contracted to $q_j$, and the two elevators that lie above it. 
Let $\Xi$ be the combinatorial type of this curve. 
Note that this parametrized tropical curve has a non-trivial automorphism induced by switching the two elevators above $u_2$ when these two elevators have the same multiplicity.  

Let $\Xi'$ be the combinatorial type obtained from $\Xi$ by splitting $u_2$ into two $3$-valent vertices, such that $l_j$ and $E''_1$ remain incident.
This shrinks the flattened cycle further, and we denote by $u'_2$ its lower endpoint and by $E_2$ the downward elevator adjacent to $u_2'$.
By Lemma~\ref{lem:alpha at simple walls and nice strata}, $\alpha(\Lambda) \cap M_{[\Xi']} \neq \emptyset$, and, arguing as above, $\alpha(\Lambda)$ contains (the image in $M_{[\Xi']}$ of) the line segment $L_{\Xi'}\subset M_{\Xi'}\cap\ev_{\Xi'}^{-1}(q_1,\dotsc,q_{n-1})$ consisting of curves with $\ell(E'_1)=\ell(E_2)+\ell(E''_1)$ .

The second boundary point of $L_{\Xi'}$, obtained by shrinking the flattened cycle to a point, contains a $2$-valent vertex $u_3$ of weight $1$ adjacent to $E'_1$ and $E_2$. Let $\Upsilon$ be its combinatorial type. Recall that $\mathrm{char}(\widetilde K)$ is either $0$ or greater than $\mathtt w(\Delta)/2$.\footnote{What follows is the only point in the proof of Theorem~\ref{thm:main thm} at which we use the assumption on the characteristic, and we need it to apply Lemma~\ref{lem:alpha at weight one vertex}.} By Lemma~\ref{lem:max stretching factor}, the multiplicity of the edge $E'_1$ (and, by balancing, of $E_2$) is at most $\mathtt w(\Delta)$. Thus either the multiplicity is not divisible by $\mathrm{char}(\widetilde K)$ or equal to $\mathrm{char}(\widetilde K)$. In the first case we develop a contracted loop, and in the second case a contracted elliptic tail as follows.

First, assume that $\m(E_1')$ is not divisible by $\mathrm{char}(\widetilde K)$.
Let $M_{[\Upsilon']}$ be a stratum in the star of $M_{[\Upsilon]}$ such that $\Upsilon'$ is obtained from $\Upsilon$ by adding a contracted loop at $u_3$ and setting the weight of $u_3$ to $0$. 
By Lemma~\ref{lem:alpha at weight one vertex} (1) we have that $\alpha(\Lambda)\cap M_{[\Upsilon']}$ is non-empty.
By Proposition~\ref{prop:weight one midpoint}, we have that $\alpha(\Lambda)\cap M_{[\Upsilon']}$ is contained in (the image of) the ray $L_{\Upsilon'}\subset M_{\Upsilon'}\cap\ev_{\Upsilon'}^{-1}(q_1,\dotsc,q_{n-1})$ consisting of curves such that $\ell(E'_1)=\ell(E_2)+\ell(E''_1)$. By Lemma \ref{lem:alpha at weight one vertex} (3), $\alpha$ is harmonic at vertices of $\Lambda$ that are mapped to $M_{[\Upsilon']}$, and hence, similar as above,
$L_{\Upsilon'}$ is contained in $\alpha(\Lambda)$. Thus, there is a leg $l$ of $\Lambda$ such that $\alpha(l)\subset L_{\Upsilon'}$ is not bounded. Then $l$ satisfies the assertion of the claim: the graph $\GG_l$ of the claim is the underlying graph of $\Upsilon'$, and the edge $\gamma$ of the claim is the contracted loop at $u_3$ (in particular, $\GG_l\backslash \gamma$ is connected). Thus assertion \eqref{cl:develop a contracted edge1} of Claim~\ref{cl:develop a contracted edge} is satisfied. 

Finally, suppose $\m(E_1') = \mathrm{char}(\widetilde K)$. In this case, let $\Upsilon''$ be the combinatorial type obtained from $\Upsilon$ by adding a contracted edge adjacent to $u_3$, now $3$-valent and of weight $0$, and $u_3'$, a new vertex of weight $1$ and valence $1$; let $\Upsilon'''$ be the combinatorial type obtained from $\Upsilon''$ by developing a loop at $u_3'$ (that is, $u_3$, $u_3'$, the edge between them and possibly the loop form a contracted elliptic tail; cf. Figure~\ref{fig:flattenedcycle}). By Lemma~\ref{lem:alpha at weight one vertex} (2) we have that $\alpha(\Lambda)\cap M_{[\Upsilon'']}$ is non-empty. By Lemma \ref{lem:alpha at weight one vertex} (3), for a vertex $w \in \Lambda$ with $\alpha(w) \in M_{[\Upsilon'']}$, $\alpha$ is harmonic at $w$, or there is an edge/leg $e$ adjacent to $w$ such that $\alpha(e^\circ) \subset M_{[\Upsilon''']}$. If $\alpha$ is harmonic at all vertices mapped to $M_{[\Upsilon'']}$, we argue as above to show that assertion \eqref{cl:develop a contracted edge2} of Claim~\ref{cl:develop a contracted edge} is satisfied; that is, we deduce from Proposition~\ref{prop:weight one midpoint} that $\alpha(\Lambda)$ contains the ray $L_{\Upsilon''}$, defined analogous to $L_{\Upsilon'}$. Otherwise, we get that $\alpha(\Lambda)\cap M_{[\Upsilon''']}$ is not empty. Since $\alpha$ is harmonic at vertices mapped to $M_{[\Upsilon''']}$ by Lemma \ref{lem:alpha at weight one vertex} (3), we can argue again as above to obtain that assertion \eqref{cl:develop a contracted edge2} of Claim~\ref{cl:develop a contracted edge} is satisfied; that is, we deduce from Proposition~\ref{prop:weight one midpoint} that $\alpha(\Lambda)$ contains a ray in the intersection of $M_{[\Upsilon''']}$ with the plane given by the condition $\ell(E'_1)=\ell(E_2)+\ell(E''_1)$. 
\end{proof}

\subsection{A counterexample in low characteristic}\label{sec:counterexample}
\begin{wrapfigure}{r}{0.32\textwidth}
\begin{center}
\tikzset{every picture/.style={line width=0.75pt}}
\begin{tikzpicture}[x=0.8pt,y=0.8pt,yscale=-0.7,xscale=0.7]
\import{./}{figdiamond.tex}
\end{tikzpicture}		
\caption{$k=k'=3$.}
\label{fig:diamond}
\end{center}
\end{wrapfigure}
We conclude our discussion of degenerations on $S_\Delta$ with an example that shows that Theorem~\ref{thm:main thm} may fail without the assumption on the characteristic of $K$.
Recall that we defined kites in Example~\ref{ex:h-transverse polygon} following \cite{LT20} as polygons with vertices $(0,0), (k, \pm 1)$ and $(k + k', 0)$ for non-negative integers $k, k'$ with $k' \geq k$ and $k' > 0$. As we noted above, kites are $h$-transverse, and by definition we have $\mathtt w (\Delta) = k + k'$.

\begin{prop} \label{prop:low characteristic counterexample}
    Suppose $\mathrm{char}(K) = p > 0$ and let $\Delta$ be the kite with 
    \begin{equation*}
    k = k' = \left\{
    \begin{array}{l}
    3 \text{ if } p = 2  \\
    p \text{ otherwise}
  \end{array}.
    \right.
    \end{equation*}
    Consider $V_{1, \Delta}^{\irr}$ parametrizing irreducible curves of geometric genus $1$ on $S_\Delta$ with trivial tangency profile. Then there is an irreducible component $V$ of $\overline V_{1, \Delta}^{\irr}$, such that $V \cap V_{0, \Delta}^{\irr} = \emptyset.$
\end{prop}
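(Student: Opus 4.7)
My plan is to exhibit an irreducible component $V\subset\overline V_{1,\Delta}^{\irr}$ consisting of parametrized genus-$1$ curves whose combinatorial structure forces a $p$-torsion relation in the group law of the elliptic domain, and then to exploit the triviality of $\mu_p$ in characteristic $p$ to preclude any degeneration to an irreducible rational curve with trivial tangency profile.

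\medskip

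\textbf{Construction of $V$ (case $p\geq 3$).} By the argument of Lemma~\ref{lem:irreducibility in rational case}, a parametrized curve $f\colon E\to S_\Delta$ with tangency profile $\underline d_0$ from an elliptic curve $E$ with four marked points $p_1,\ldots,p_4$ (one for each edge of $\Delta$) is specified by a character $\chi\in\mathrm{Hom}(M,K^\times)$ together with the divisors $\mathrm{div}(f^*(x^m))=\sum_i\langle n_i,m\rangle p_i$, provided these are principal on $E$ for all $m\in M$. Taking $m\in\{(1,0),(0,1)\}$ as a basis and using the primitive inner normals $n_i\in\{(1,k),(-1,k'),(-1,-k'),(1,-k)\}$ of the kite with $k=k'=p$, principality of the two divisors reduces via Abel--Jacobi to
\[
p_1+p_4=p_2+p_3\ \text{in}\ E,\qquad p(p_1+p_2-p_3-p_4)=0\ \text{in}\ E.
\]
Substituting the first equation into the second yields $2(p_1-p_3)\in E[p]$; since $p$ is odd, this is equivalent to $p_1-p_3\in E[p]$. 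I choose $Q:=p_1-p_3$ to be of exact order $p$, which requires $E$ to be ordinary. The pair $(E,Q)$ then parametrizes an irreducible component $\mathcal Y$ of the ordinary locus of the modular curve $Y_1(p)$, a smooth affine curve of dimension $1$ (cf.\ \cite{DR72}, as invoked in Lemma~\ref{lem:alpha at weight one vertex}). Normalizing $p_1=0$ by translation on $E$, letting $p_2\in E$ and $\chi\in(K^\times)^2$ vary freely, and setting $p_3=-Q$, $p_4=p_2-Q$, I obtain an irreducible $4$-dimensional family of parametrized curves. Since $|\underline d_0|=4$, Proposition~\ref{prop:tangency} gives $\dim V_{1,\Delta}^{\irr}=4$, so the closure $V$ of the induced map to $|\mathcal L_\Delta|$ is an irreducible component of $\overline V_{1,\Delta}^{\irr}$. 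For $p=2$ and $k=k'=3$, the second divisor condition becomes $6(p_1-p_3)=0$, and I single out the component in which $p_1-p_3$ is the non-trivial $2$-torsion point of the ordinary $E$.

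\medskip

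\textbf{Obstruction to degeneration.} Suppose for contradiction $[C_0]\in V\cap V_{0,\Delta}^{\irr}$. Construction~\ref{constr:tropical family} produces a relative parametrization $f\colon\mathcal E\dashrightarrow S_\Delta$ over a smooth projective base $(B,\tau)$, whose generic fiber is a parametrized smooth elliptic curve from the family above and whose fiber at $\tau$ has image $C_0$. Let $\mathcal E^0\to B^0$ be the stable model, and $\widetilde E_0:=\mathcal E^0_{\widetilde\tau}$ its fiber at the reduction of $\tau$. Since $C_0$ is irreducible rational of geometric genus $0$ while $\widetilde E_0$ has arithmetic genus $1$, the possible shapes of $\widetilde E_0$ are: (i) an irreducible nodal cubic; (ii) a cycle of $\mathbb P^1$'s (possibly with rational tails); or (iii) a tree of $\mathbb P^1$'s with an attached smooth elliptic tail contracted by $f$. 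By toric transversality of $C_0$ (ensured by triviality of $\underline d_0$), the $|\underline d_0|=4$ preimages of the toric boundary on $C_0^\nu=\mathbb P^1$ are distinct, and the sections $p_1,p_3$ of $\mathcal E^0/B^0$ reduce to points on the smooth locus of non-contracted rational components of $\widetilde E_0$. In cases (i) and (ii), $\mathrm{Pic}^0(\widetilde E_0)\cong\mathbb G_m^{b_1}$ where $b_1$ is the first Betti number of the dual graph, so $\mathrm{Pic}^0(\widetilde E_0)[p](K)=\mu_p(K)^{b_1}=\{1\}$ in characteristic $p$. The relative $p$-torsion class $Q=p_1-p_3$ of $\mathrm{Pic}^0(\mathcal E^0/B^0)$ must specialize to the identity in $\mathrm{Pic}^0(\widetilde E_0)$, forcing $\widetilde p_1=\widetilde p_3$; this contradicts the distinctness of the preimages of the toric boundary. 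In case (iii), $\mathrm{Pic}^0(\widetilde E_0)$ is the Jacobian $E'$ of the elliptic tail, and the specialization of $Q$ is a non-zero element of $E'[p]$ (using that the reduction map on $p$-torsion is injective for elliptic curves with ordinary reduction); but $p_1-p_3$ is supported on the rational components and represents the identity in $\mathrm{Pic}^0(\widetilde E_0)$, again a contradiction.

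\medskip

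The hardest part is making the degeneration analysis rigorous, particularly justifying via the Néron model of $\mathrm{Pic}^0(\mathcal E^0/B^0)$ that the specialization of the $p$-torsion section is controlled as asserted in each configuration of $\widetilde E_0$, and verifying case (iii) (perhaps by showing that the generic ordinary reduction of $E$ propagates to give ordinary reduction of $E'$, keeping $E'[p]\cong\mathbb Z/p$). A secondary technical issue is checking that the constructed family has dimension exactly $\dim V_{1,\Delta}^{\irr}=4$ and hence dominates a full irreducible component rather than a proper sub-locus of one that might otherwise contain rational curves in its closure; this relies on the irreducibility of the ordinary locus of $Y_1(p)$.
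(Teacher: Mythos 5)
Your proposal takes the same broad strategy as the paper (extract a torsion relation from the group law on the elliptic domain, map to a modular curve, and obstruct degeneration to an irreducible rational curve), but it diverges in two important places, and the second one leaves a genuine gap.

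First, a small but real logical slip in the construction: from $p_1+p_4=p_2+p_3$ and $p(p_1+p_2-p_3-p_4)=0$ you obtain $2p(p_1-p_3)=0$, i.e.\ $p_1-p_3\in E[2p]$, \emph{not} $p_1-p_3\in E[p]$. For $p$ odd, $2(p_1-p_3)\in E[p]$ is equivalent to $2(p_1-p_3)\in E[p]$, nothing more; it does \emph{not} imply $p_1-p_3\in E[p]$, since $p_1-p_3$ could have order $2$ or $2p$. Your choice of the sub-locus where $Q=p_1-p_3$ has exact order $p$ is still a valid $4$-dimensional irreducible family, so the construction of a component survives, but you should be aware you are looking at a \emph{different} component than the paper's: the paper works with the component on which the analogous point has exact order $2p$ (hence uses $X_1(2p)$ rather than $X_1(p)$), and this choice is not cosmetic.

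The genuine gap is in the degeneration step, and it is precisely at the place you flag as ``the hardest part.'' In your case (iii), you need the specialization of $Q$ to be a \emph{nonzero} point of $E'[p]$ for the elliptic tail $E'$, and your argument requires $E'$ to be ordinary. But there is no reason the elliptic tail in a one-parameter degeneration of your component must be ordinary: the family can perfectly well specialize through a supersingular $j$-invariant. If $E'$ is supersingular, then $E'[p](\widetilde K)$ is trivial (as $\alpha_p$-type group), the reduction of the $F$-rational $p$-torsion point $Q$ is the identity, and your intended contradiction evaporates. The Igusa-curve picture shows the ordinary locus of the relevant component of $Y_1(p)$ is open but its closure contains the supersingular locus, so ``ordinary reduction propagates'' is not automatic — it simply fails along such specializations. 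This is exactly the case the paper's more elaborate device handles: by working over $X_1(2p)$ and projecting via $\varphi_p\colon X_1(2p)\to X_1(2)$, the paper detects that the image of the $2p$-torsion section under multiplication by $p$ is a $2$-torsion point distinct from the origin, and this persists at supersingular $X_0$ because $p\neq 2$. Without the auxiliary $\ZZ/2$ factor you have no such anchor at supersingular fibers, and the proof as written is incomplete. (Whether your order-$p$ component actually does intersect $V_{0,\Delta}^{\irr}$ through a supersingular degeneration would have to be analyzed separately; the paper sidesteps the question entirely by picking the order-$2p$ component.)
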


\begin{proof}
We assume $k = k' = p > 2$, the argument for $p = 2$ and $k = k' = 3$ is completely analogous. We first follow the description in \cite[\S 3]{LT20} to obtain some properties of the curves para\-me\-trized by $V^{\irr}_{1, \Delta}$. For a general point $[C] \in V^{\irr}_{1, \Delta}$, let $f\: E \to S_\Delta$ be the parametrized curve given by normalizing $C$. Let $O, P, Q, R$ be the preimages of the toric divisors $D_i$ corresponding to the sides of $\Delta$ with outer normals $(-1, -p), (- 1, p), (1, - p), (1,p)$, respectively. Then the divisors on $E$ of the pullbacks of the monomials $x$ and $y$ are given by $\mathrm{div}(f^*(y)) = -p O + p P - p Q + p R$ and $\mathrm{div}(f^*(x)) = - O - P + Q + R$. Using these equations, we get in the group law of the elliptic curve $(E, O)$:
\begin{equation} \label{eq:torsion diamond}
P = R + Q \quad \text{ and } \quad 2p R = 0.
\end{equation}
Conversely, any smooth genus $1$ curve $E$ and choice of distinct points $O, P, Q, R$ satisfying the two conditions above gives an element in $V^{\irr}_{1, \Delta}$.

For $l \in \NN_{\geq 2}$ let $X_1(l)$ denote the modular curve parametrizing generalized elliptic curves together with an ample Drinfeld $\ZZ/l \ZZ$-structure as in \cite[Definitions~2.4.1 and 2.4.6]{C07}; see also \cite{DR72} and \cite{KM85}. Denote by $Y_1(l)$ the open subset of $X_1(l)$ over which the elliptic curves are smooth. We will consider the forgetful map from $V^{\irr}_{1, \Delta}$ to $Y_1(2p)$ induced by $(E, O, R)$. More precisely, by what we discussed above, we may choose a point $[C] \in V^{\irr}_{1, \Delta}$ such that for the triple $(E, O, R)$, $R$ is a point of exact order $2p$ -- that is, the subgroup scheme generated by multiples of $R$ is isomorphic to $\ZZ/2p \ZZ$. Denote by $V^\circ$ the irreducible component of $V^{\irr}_{1, \Delta}$ containing $[C]$. Possibly after pulling back along a finite surjective map, we may assume that the universal family over $V^\circ$ is equinormalizable by \cite[\S 5.10]{dJ96}. Thus the normalization of the total space of the universal family induces a map $V^\circ \to Y_1(2p)$, whose image is contained in an irreducible component $Y^\circ$ of $Y_1(2p)$. Denote by $Y \subset X_1(2p)$ its closure.

Now let $[C_0]$ be a point in the boundary of $V = \overline {V^\circ}$ and assume contrary to the claim that $[C_0] \in V_{0, \Delta}^{\irr}$.  Choose a discretely valued field $F$ with valuation ring $F^0$ and a map $\Spec(F^0) \to V$ such that the generic point of $\Spec(F^0)$ maps to a point in $V^\circ$, and the special point to $[C_0]$.
Pulling back the normalization of the total space of the universal family over $V^\circ$, we get a smooth genus $1$ curve $\mathcal E \to \Spec(F)$ with the four marked points $O, P, Q, R$. Up to a finite extension of $F$, we may assume that this family extends to a stable model $\mathcal E^0 \to \Spec(F^0)$. Then $f$ extends on the central fiber $\widetilde{ \mathcal E}$ of $\mathcal E^0$ to a map $\widetilde f\: \widetilde{\mathcal E} \to S_\Delta$ with $C_0 = \widetilde f(\widetilde{\mathcal E})$. Since we assume that $C_0$ is torically transverse, $C_0$ intersects the toric boundary in $4$ distinct points. Since $\widetilde{ \mathcal E}$ is stable of arithmetic genus $1$, and $C_0$ is irreducible, this implies the following: 
\begin{center}
    ($\dagger$)\; at least $3$ marked points lie on an irreducible component $E_0$ of $\widetilde{ \mathcal E}$ of geometric genus $0$. 
\end{center}

On the other hand, consider the map $\varphi^\circ\: \Spec(F) \to Y^\circ$ induced by $(\mathcal E, O, R)$ as above. Since $X_1(2p)$ is proper, the map extends uniquely to a map $\varphi\: \Spec(F^0) \to Y$. We denote by $[X_0] \in Y$ the image of the special point under this extension, corresponding to a generalized elliptic curve $X_0$. Let $O_0, P_0, Q_0$ and $R_0$ denote the points on $X_0$ that extend, respectively, $\varphi^\circ(O), \varphi^\circ(P), \varphi^\circ(Q)$, and $\varphi^\circ(R)$ to $X_0$.
Recall that we have two natural maps $\varphi_2\: X_1(2p) \to X_1(p)$ and $\varphi_p\: X_1(2p) \to X_1(2)$, both over $\overline M_{1,1}$. They map a level $2p$ structure to a level $p$ structure (respectively, a level $2$ structure) via multiplication by $2$ (respectively, $p$); on a singular curve, they contract irreducible components, that no longer intersect the level structure, see \cite[Lemma 4.2.3 (ii)]{C07}.

Suppose first $X_0$ is smooth. Since $p \not | 2$, the torsion points parametrized  by $Y_1(2)$ all are distinct from the origin on the elliptic curves. Since $R$ has exact order $2p$, this implies that the map on universal families induced by $\varphi_p$ maps $R_0$ and $O_0$ to distinct points. In particular, $R_0$ and $O_0$ are distinct points on $X_0$. On the other hand, by \eqref{eq:torsion diamond}, we have $Q_0 = O_0 + Q_0$ and $P_0 = R_0 + Q_0$. Thus also $Q_0$ and $P_0$ are distinct points on $X_0$. This implies that the stable model of the curve with marked points $(\varphi^\circ(\mathcal E), \varphi^\circ(O), \varphi^\circ(P), \varphi^\circ(Q), \varphi^\circ(R))$ cannot contain a rational component with $3$ marked points; by uniqueness of the stable model, this contradicts~($\dagger$).
Now suppose $X_0$ is singular.  By \cite[Proposition 2.3, p. 246]{DR72}, $X_1(p)$ has $2$ irreducible components and one of them, $Y'$, parametrizes, away from supersingular curves, generalized elliptic curves together with a fixed inclusion of $\ZZ/p \ZZ$ as a subgroup scheme. Since the multiplicative torus $\GG_m$ does not contain a subgroup scheme isomorphic to $\ZZ/p\ZZ$ in characteristic $p$, the universal family over $Y'$ does not contain singular, irreducible fibers. Since we chose $R$ such that it generates a subgroup scheme isomorphic to $\ZZ/2p \ZZ$, we have that $\varphi_2(Y) \subset Y'$. Thus $X_0$ is not irreducible. By the definition of an ample Drinfeld $\ZZ/2p \ZZ$ structure, it follows that $O_0$ and $R_0$ lie on different irreducible components of $X_0$. Arguing similar as before, we obtain that also $Q_0$ and $P_0$ lie on different irreducible components of $X_0$ by \eqref{eq:torsion diamond}. This contradicts ($\dagger$) by the uniqueness of the stable model. 
\end{proof}

\section{Zariski's Theorem}\label{sec:zar}
As a consequence of Theorem~\ref{thm:main thm}, we prove in this section Zariski's Theorem for $S_\Delta$. Recall that we denote the sides of $\Delta$ by $\partial\Delta_i$ and the corresponding toric divisors of $S_\Delta$ by $D_i$, where $1\leq i\leq s$.

\begin{thm}[Zariski's Theorem]\label{thm:Zariski}
Let $\Delta$ be an $h$-transverse polygon and suppose either $\mathrm{char}(K)=0$ or $\mathrm{char}(K)>\mathtt w(\Delta)$. Let $0 \leq g\leq \#|\Delta^\circ\cap M|$ be an integer and $\underline d$ a tangency profile of $(S_\Delta, \mathscr L_\Delta)$, which is trivial on toric divisors corresponding to non-horizontal sides of $\Delta$.
Let $V\subset V_{g,\underline d,\Delta}$ be a subvariety. Then: 
\begin{enumerate}
    \item $\dim V\leq |\underline d|+g-1$, and
    \item if $\dim V =|\underline d|+g-1$, then for a general $[C]\in V$, the curve $C$ is nodal, and its singular locus is contained in the maximal torus.
\end{enumerate}
\end{thm}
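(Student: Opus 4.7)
The plan is to deduce Theorem~\ref{thm:Zariski} by combining Proposition~\ref{prop:tangency} (for the dimension bound), the Main Theorem (Theorem~\ref{thm:main thm}, which places general rational curves in $\overline V$), and Lemma~\ref{lem:irreducibility in rational case} (which shows such rational curves are nodal with singular locus in the torus), together with the fact that ``nodal with singular locus in the maximal torus'' is a Zariski-open condition on $|\mathcal L_\Delta|$.

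Statement (1) will follow immediately from Proposition~\ref{prop:tangency}, which asserts $V_{g,\underline d,\Delta}$ has pure dimension $|\underline d| + g - 1$. For part (2), I first reduce to the irreducible case: assuming $\dim V = |\underline d| + g - 1$, I claim a general $[C] \in V$ lies in $V^{\irr}_{g, \underline d, \Delta}$. The locus of reducible curves in $V_{g, \underline d, \Delta}$ is the finite union, over splittings $\mathcal L_\Delta \cong \mathcal L_1 \otimes \mathcal L_2$, $g = g_1 + g_2$, $\underline d = \underline d_1 + \underline d_2$, of the images of the union-of-divisors maps $V_{g_1, \underline d_1, \mathcal L_1} \times V_{g_2, \underline d_2, \mathcal L_2} \to V_{g, \underline d, \Delta}$. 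These maps have finite fibers, and each factor has dimension $|\underline d_i| + g_i - 1$ by Proposition~\ref{prop:tangency}, so the reducible locus has dimension at most $|\underline d| + g - 2 < \dim V$. Hence $V$ is (after taking closure) the closure of an irreducible component of $V^{\irr}_{g, \underline d, \Delta}$. Since $\mathrm{char}(K) > \mathtt w(\Delta) \ge \mathtt w(\Delta)/2$, the Main Theorem applies and yields $V^{\irr}_{0, \underline d, \Delta} \subset \overline V$.

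Next I will invoke Lemma~\ref{lem:irreducibility in rational case}: a general $[C_0] \in V^{\irr}_{0, \underline d, \Delta}$ is the image of a map $f \colon \PP^1 \to S_\Delta$ that is injective on the preimage of the toric boundary and a local immersion. Since $C_0$ is torically transverse it avoids the zero-dimensional orbits of $S_\Delta$ (which contain all singular points of the surface), so $C_0$ lies in the smooth locus of $S_\Delta$, and its singular locus consists exactly of the multiple points of $f$, which lie in the maximal torus. To upgrade this to nodality, I will use the explicit parametrization of $V^{\irr}_{0, \underline d, \Delta}$ in the proof of Lemma~\ref{lem:irreducibility in rational case}(1): for a generic choice of parameters $(\chi, \{c_{i,j}\})$, each multiple point of $f$ has exactly two preimages meeting transversally, because the failure of either condition is a proper closed condition on the parameter space, verifiable by differentiating the explicit formulae $f^*(x^m) = \chi(m)\prod_{i,j}(t-c_{i,j})^{d_{i,j}(n_i,m)}$ used there.

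Finally, being nodal with singular locus in the maximal torus is Zariski-open in $|\mathcal L_\Delta|$ (the failure locus is cut out by closed conditions such as vanishing of higher-order discriminants or additional intersections with the toric boundary beyond the prescribed tangency points). Since some such $C_0$ lies in $\overline V$, these open conditions hold for a general member of $V$, completing the proof. The main obstacle in this plan is the upgrade from ``locally immersed'' to ``nodal'' for a general rational curve, which will be handled by the elementary dimension count on the parameter space from Lemma~\ref{lem:irreducibility in rational case}; everything else is a direct assembly of results already established.
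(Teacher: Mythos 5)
Your reduction to the irreducible case contains a genuine error: you assume that for a reducible curve $C = C_1 \cup C_2$ one has $\pg(C) = \pg(C_1) + \pg(C_2)$, but in fact the normalization of $C$ is the \emph{disconnected} curve $C_1^\nu \sqcup C_2^\nu$, so $\pg(C) = g_1 + g_2 - 1$ (and more generally $\pg(C) = \sum_k g_k - (m-1)$ for $m$ components). Redoing your count, a product $V_{g_1,\underline d_1,\CL_1}\times V_{g_2,\underline d_2,\CL_2}$ has dimension $(|\underline d_1|+g_1-1)+(|\underline d_2|+g_2-1) = |\underline d| + (g+1) - 2 = |\underline d|+g-1$, \emph{not} $|\underline d|+g-2$. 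This is exactly the computation in the paper's Proposition~\ref{prop:tangency}, whose conclusion is that $V_{g,\underline d,\Delta}$ (which is not restricted to irreducible curves) has pure dimension $|\underline d|+g-1$. So the locus of reducible curves can, and often does, form components of $V_{g,\underline d,\Delta}$ of full dimension $|\underline d|+g-1$; Theorem~\ref{thm:Zariski} must treat these, and your dimension count does not allow you to discard them. The paper handles this by showing $V$ is dominated by a product $\prod_k V_k$ of irreducible components of closures of Severi varieties $\overline V^\irr_{g_k,\underline d_k,\CL_k}$ of the individual factors $\CL_k=\CO_{S_\Delta}(C_k)$, applying Theorem~\ref{thm:main thm} to each factor (after passing to $S_{\Delta_k}$ for non-degenerate $\Delta_k$), and then proving a separate Claim that a \emph{general union} $C=\bigcup_k C_k$ of rational curves is nodal.

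A second, lesser issue: even granting the reduction to the irreducible case, your upgrade from ``$f$ is a local immersion'' (Lemma~\ref{lem:irreducibility in rational case}(2)) to ``$C$ is nodal'' is only asserted, not proved. You say this is ``verifiable by differentiating the explicit formulae,'' but this is precisely the nontrivial content: the paper carries out such an explicit computation only for height-$2$ polygons with no horizontal sides (Lemma~\ref{lem:height 2}), and even then the argument is several lines of careful algebra. For general $\Delta$, the paper instead rules out triple points and tacnodes (Cases~1, 2.1, 2.2 of the Claim) via a dimension count on the first-order deformation space $H^0(C^\nu, \mathscr N_{f'}(-\sum(d_{i,j}-1)p_{i,j}))$ after blowing up $S_\Delta$ at the putative worse-than-nodal point (and, for the tacnode, at a point of the exceptional divisor), comparing against the expected dimension of the locus of curves with that singularity. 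This blow-up argument, together with Lemma~\ref{lem:height 2} feeding in the bound $|\underline d_1|\geq 5$ in Case~2.2, is essential and has no analogue in your plan. The assembly via openness of the nodal locus at the end is fine, but the two ingredients you take for granted are exactly where most of the work lies.
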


\begin{lem}\label{lem:height 2}
Under the assumptions of Theorem~\ref{thm:Zariski}, suppose that $\Delta$ has height 2 and no horizontal sides. Then for a general point $[C]\in  V^\irr_{0,\Delta}$, the curve $C$ is nodal.
\end{lem}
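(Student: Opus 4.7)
The plan is to invoke Lemma~\ref{lem:irreducibility in rational case} and then exploit the special feature of height-$2$ $h$-transverse polygons with no horizontal sides --- that $f^*x\colon\PP^1\to\PP^1$ has degree exactly $2$ --- to rule out triple points automatically and reduce nodality to an open genericity condition on the family. Because nodality is a Zariski-open condition and $V^\irr_{0,\Delta}$ is irreducible by Lemma~\ref{lem:irreducibility in rational case}(1), it suffices to show that the non-nodal locus is a proper closed subset of $V^\irr_{0,\Delta}$. If $\mathtt w(\Delta)\leq 1$ then $\#(\Delta^\circ\cap M)=0$ and $C$ is smooth for trivial reasons, so I may assume $\mathtt w(\Delta)\geq 2$, in which case $\mathrm{char}(K)>2$ by hypothesis.

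First I would apply Lemma~\ref{lem:irreducibility in rational case}(2) to obtain that for general $[C]$ the parametrization $f\colon\PP^1\to S_\Delta$ is a local immersion and all singularities of $C$ lie in the dense torus $T\subset S_\Delta$. Next I would establish the key observation $\deg f^*x=2$: $h$-transversality together with the absence of horizontal sides forces every primitive inner normal $(n_i^x,n_i^y)$ to a side of $\Delta$ to satisfy $n_i^x\in\{-1,0,1\}$, and a direct computation shows that the sum of lattice lengths of the right-hand sides of $\Delta$ equals the total $y$-travel along the right boundary, which is $2$. The formula of Lemma~\ref{lem:irreducibility in rational case}(1) then yields $\deg f^*x=2$, and since $\mathrm{char}(K)\neq 2$, this cover is separable and Galois with an involution $\iota\colon\PP^1\to\PP^1$. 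Any singular point $p\in C\cap T$ has its $f$-preimages in a single fiber of $f^*x$, which has at most two elements; triple (and higher) intersection points are therefore ruled out.

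To rule out tacnodes, I would choose coordinates on $\PP^1$ in which $\iota$ acts by $t\mapsto -t$ and decompose
\[
f^*y=\phi(s)+t\,\psi(s),\qquad s:=t^2,
\]
with $\phi,\psi\in K(s)$. A short computation of the tangent directions $df(\pm t_0)$ at a nontrivial $\iota$-orbit $\{t_0,-t_0\}$ with $\psi(s_0)=0$ shows that these tangents are parallel --- equivalently, $f(t_0)$ is a tacnode rather than a node --- precisely when $\psi'(s_0)=0$, i.e., $\psi$ has a multiple zero at $s_0=t_0^2$. Thus nodality of a general $C$ reduces to showing that the corresponding $\psi$ has only simple zeros.

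The main obstacle will be to verify this last genericity statement. The plan is to check directly, using the explicit formula $f^*(x^m)=\chi(m)\prod_{i,j}(t-c_{i,j})^{d_{i,j}(n_i,m)}$ from Lemma~\ref{lem:irreducibility in rational case}(1), that $\psi$ depends nontrivially on the parameters $c_{i,j}$ and $\chi$: the zeros of $\psi$ are the $s$-coordinates of the singular points of $C$, and these positions manifestly move as the parameters vary. The locus of $[C]$ for which $\psi$ has a multiple zero is then a proper Zariski-closed subset of the irreducible variety $V^\irr_{0,\Delta}$, so a general $[C]$ is nodal.
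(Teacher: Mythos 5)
Your reduction is clean and correct up to a point: the observation that $\deg f^*x = 2$ is right (it equals the height of $\Delta$), the use of the covering involution $\iota$ to rule out triple points is legitimate, and your criterion that $f(t_0)$ is a worse-than-nodal two-branch singularity precisely when $\psi(s_0)=\psi'(s_0)=0$ is correct --- one also needs $u'(s_0)\neq 0$ where $f^*x = u(s)$, but since $u$ is a M\"obius transformation in $s$ this is automatic. This is conceptually tidier than the paper's argument, which works with the symmetric functions $t_1+t_2$, $t_1 t_2 = c$ directly rather than diagonalizing the involution.

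However, the final step --- showing that $\psi$ generically has only simple zeros --- is asserted, not proven, and the justification you offer does not actually establish it. ``$\psi$ depends nontrivially on the parameters'' and ``the zeros move as the parameters vary'' do not imply that the discriminant of $\psi$ is generically nonvanishing: a family such as $(s-c)^2$ has moving zeros and nonconstant coefficients yet a double root for every $c$. Concluding that ``the locus where $\psi$ has a multiple zero is a proper Zariski-closed subset'' requires exhibiting at least one parameter value at which $\psi$ has simple zeros (or a degeneration/dimension argument), and this is exactly the content the paper supplies and your write-up omits. The paper gets there by an explicit computation: from the tangency condition together with $t_1 t_2 = c$ it solves for $t_1+t_2$, obtaining a concrete quadratic $F(t)$ in $t$ with coefficients rational in $c,a,b$; it then observes that $t_1,t_2$ must also be roots of $G_\mu(t)=t^b(t-1)^a-\mu$, reduces $t^b(t-1)^a$ modulo $F(t)$ to a linear remainder $A(c)t+B(c)$, and shows $A(c)\not\equiv 0$ by specializing to $c=0$. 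Some computation of this kind (or an equivalent degeneration) is unavoidable, and your proposal does not carry it out.
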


\begin{proof}
After a translation and a change of coordinates, we may assume that $\Delta$ has vertices $(0,0),$ $(a,1),(a+b,0),(0,-1)$, where $a,b\geq 0$ and $a+b>0$. We may further assume that $\mathtt w(\Delta)\geq 2$ as otherwise $C$ is smooth. Therefore, char$(K)>2$. Let us label the sides of $\Delta$ as illustrated
in Figure~\ref{fig:height2}. 
Note that the case where $\Delta$ is a triangle is included by setting $a=0$.

\begin{figure}[ht]
\tikzset{every picture/.style={line width=0.75pt}}
\begin{tikzpicture}[x=1pt,y=1pt,yscale=-1,xscale=1]
\import{./}{figheight2.tex}
\end{tikzpicture}		
\caption{}
\label{fig:height2}
\end{figure}

As in the proof of Lemma~\ref{lem:irreducibility in rational case}, a general curve $[C]\in V^\irr_{0,\Delta}$ is represented as the image of a map $f\colon \mathbb P^1\rightarrow S_\Delta$.
We may assume that $f^{-1}(D_1)=c$, $f^{-1}(D_2)=1$, $f^{-1}(D_3)=0$ and $f^{-1}(D_4)=\infty$. Then
$$f^*(x)=\alpha\cdot\frac{t}{(t-c)(t-1)}\mathrm{\ and\ }f^*(y)=\beta\cdot t^b(t-1)^a,$$
where $\alpha,\beta\in K^\times$.

By Lemma~\ref{lem:irreducibility in rational case}, the curve $C$ has no unibranch singularities and is smooth on the boundary divisor of $S_\Delta$. On the other hand, since $f^*(x)$ is the quotient of two polynomials of degree at most $2$, each singularity of $C$ has multiplicity at most $2$. Thus it suffices to show that a general curve $[C]\in V^\irr_{0,\Delta}$ does not contain a (generalized) tacnode that lies in the maximal torus.

Suppose to the contrary that $p\in (K^\times)^2$ is a tacnode of $C$, and $f^{-1}(p)=\{t_1,t_2\}$. Then $f^*(x)(t_1)=f^*(x)(t_2)$, from which we get $t_1t_2=c$. On the other hand,
since $\left(\frac{df^*(x)}{dt}\Big|_{t_i},\frac{df^*(y)}{dt}\Big|_{t_i}\right)$ gives the same tangent direction for $i=1,2$, we have
\begin{equation}\label{eq:tangent condition}
    \frac{df^*(x)}{dt}\Big|_{t_1}\cdot \frac{df^*(y)}{dt}\Big|_{t_2}=\frac{df^*(x)}{dt}\Big|_{t_2}\cdot \frac{df^*(y)}{dt}\Big|_{t_1}.
\end{equation}
Let us compute the derivatives:
$$\frac{df^*(x)}{dt}=f^*(x)\Big(\frac{1}{t}-\frac{1}{t-c}-\frac{1}{t-1}\Big) \mathrm{\ \ and\ \ }\frac{df^*(y)}{dt}=f^*(y)\Big(\frac{a}{t-1}+\frac{b}{t}\Big).$$
Substituting these in equation~(\ref{eq:tangent condition}), we get
$$(a+b)\cdot(t_1-c)(t_2-c)+a(c-1)\cdot t_1t_2+bc\cdot (t_1-1)(t_2-1)=0.$$
Since $t_1t_2=c\neq 0$,
we can rearrange terms to obtain
$$(a+2b)\cdot(t_1+t_2)=(2a+2b)\cdot c+2b.$$
As $c$ is general and char$(K)>\max\{2,a+b\}$, this equation can not hold if $a+2b=0$. Hence we may assume $a+2b\neq 0$, which gives $$t_1+t_2=\frac{(2a+2b)\cdot c+2b}{a+2b}.$$
It follows that $t_1$ and $t_2$ are the two roots of the quadratic polynomial
$$F(t):=t^2-\frac{(2a+2b)\cdot c+2b}{a+2b}t+c.$$
On the other hand, since $f^*(y)(t_1)=f^*(y)(t_2)$, $t_1$ and $t_2$ are also two roots of
$$G_\mu(t):=t^b(t-1)^a-\mu$$
for some $\mu\in K^\times$. Therefore, in order to get a contradiction, it suffices to show that for a general $c\in \mathbb P^1$, there is no $\mu\in K^\times$ such that $F(t)$ divides $G_\mu(t)$.

Let $R(t)$ be the residue of $t^b(t-1)^a$ divided by $F(t)$. We can write $R(t)=A(c)\cdot t+B(c)$
where $A$ and $B$ are polynomials in $c$. Now $F(t)$ not dividing $G_\mu(t)$ for any $\mu\in K$  (including $\mu=0$) is equivalent to
$A(c)\neq 0$. It is therefore sufficient to show that $A$ is a non-zero polynomial, which further reduces to showing that $F(t)$ does not divide $G_\mu(t)$ for any $\mu\in K$ when $c=0$. In this case the two roots of $F(t)$ are $0$ and $2b/(a+2b)$. If $a=0$, then $t=0$ and $t=2b/(a+2b)=1$ can not be the roots of $G_\mu=t^b-\mu$ at the same time. If $a\neq 0$ and $b=0$ then $F=t^2$ and there is a term $\pm at$ in $G_\mu$, hence $F$ does not divide $G_\mu$. If $a,b\neq 0$ and $\mu=0$ then $t=2b/(a+2b)\neq 0,1$ is not a root of $G_\mu=t^b(t-1)^a$; if $a,b,\mu\neq 0$ then $t=0$ is not a root of $G_\mu$. In each case $G_\mu$ is not divisible by $F$, hence we are done.
\end{proof}

\begin{proof}[Proof of Theorem~\ref{thm:Zariski}]
The first part of the claim follows immediately from Proposition~\ref{prop:tangency}. We need to show that if $V$ is an irreducible component of $V_{g,\underline d,\Delta}$, then for a general point $[C]\in V$, the curve $C$ is nodal, and all its singular points are contained in the maximal torus.
As in the proof of Proposition~\ref{prop:tangency}, suppose $C$ has $m$ irreducible components $C_1,\dotsc, C_m$. Set $g_k:=\pg(C_k)$, $\CL_k:=\CO_{S_\Delta}(C_k)$, and let $\underline d_k$ be the tangency profile of the parametrized curve induced by the normalization of $C_k$. Then $V$ is dominated by an irreducible component $\prod_{k=1}^mV_k$ of $\prod_{k=1}^m \overline V^\irr_{g_k,\underline d_k, \CL_k}$, where $V_k$ is an irreducible component of $\overline V^\irr_{g_k,\underline d_k,\mathscr L_k}$.

Let $\Delta_k\subset M_\mathbb R$ be the polygon that defines the line bundle $\mathscr L_k$. If $\Delta_k$ is degenerate, then $g_k=0$, $\underline d_k$ is trivial, $|\underline d_k|=1$, and $V_k=\overline V^\irr_{0,\underline d_k,\mathscr L_k}=|\mathscr L_k|$. Otherwise it is easy to check that $\Delta_k$ is still $h$-transverse, and we have a natural map $\pi_k\colon S_\Delta\rightarrow S_{\Delta_k}$ obtained from a sequence of toric blow-ups.  Note that $\pi_k$ induces a natural map $\pi'_k\colon V_k\rightarrow \overline V^\irr_{g_k,\underline d_k,\Delta_k}$. By Proposition \ref{prop:tangency}, we have $\dim V_k=\dim \overline V^\irr_{g_k,\underline d_k,\Delta_k}$. Since a general point of $V_k$ represents a curve in $S_\Delta$ that does not intersect the exceptional divisors of $\pi_k$, the map $\pi'_k$ maps $V_k$ birationally to an irreducible component $V'_k$ of $\overline V^\irr_{g_k,\underline d_k,\Delta_k}$. Now by Theorem~\ref{thm:main thm}, $V'_k$ contains $V^\irr_{0,\underline d_k,\Delta_k}$, hence $V_k$ contains $V^\irr_{0,\underline d_k,\mathscr L_k}$, and $\prod_{k=1}^mV_k$ contains $\prod_{k=1}^mV^\irr_{0,\underline d_k,\mathscr L_k}$. Since the locus of nodal curves is open in $\prod_{k=1}^mV_k$ by \cite[Tag 0DSC]{stacks-project}, and similarly the locus of curves with singular locus contained in the maximal torus of $S_\Delta$ is open, it remains to prove the following claim.
\end{proof}

\begin{cl}
Let $C=\cup_{1\leq k\leq m}C_k$ be a curve such that $[C_k]\in V^\irr_{0,\underline d_k,\mathscr L_k}$ is general. Then $C$ is nodal, and its singular locus is contained in the maximal torus.
\end{cl}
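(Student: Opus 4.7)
The singularities of $C=\bigcup_k C_k$ split into self-singularities of each component $C_k$ and pairwise intersections $C_k\cap C_{k'}$; I would handle these contributions independently.

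For each individual $C_k$: since $\mathscr L_\Delta=\bigotimes_k\mathscr L_k$, the dual polygons satisfy $\Delta=\sum_k\Delta_k$ as a Minkowski sum. As the width of an $h$-transverse polygon is super-additive under Minkowski sums, $\mathtt w(\Delta_k)\le\mathtt w(\Delta)$ for every $k$, so the characteristic hypothesis transfers to each $\Delta_k$. Lemma~\ref{lem:irreducibility in rational case} then yields, for a general $[C_k]$, that $f_k\:\mathbb P^1\to S_{\Delta_k}\hookrightarrow S_\Delta$ is locally an immersion and injective on the preimage of the toric boundary, so the singular locus of $C_k$ is finite and contained in the maximal torus of $S_\Delta$. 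To upgrade ``locally immersed'' to ``nodal'' I would combine this with a dimension count in the irreducible parameter space $\Hom(M,K^\times)\times(\mathbb P^1)^{|\underline d_k|}$ associated to the parametrization $f_k^*(x^m)=\chi(m)\prod_{i,j}(t-c_{i,j})^{d_{i,j}(n_i,m)}$: the loci of $C_k$ with an $r$-fold point ($r\ge 3$) or with a tacnode have expected codimension at least $r-2\ge 1$ and $1$, respectively. Verifying that these bad loci are proper then reduces to an explicit tangent-vector computation in the style of Lemma~\ref{lem:height 2}, using the log-derivatives $\partial_m\log f_k$ for $m$ in a suitable basis of $M$ and exploiting the bound $\mathrm{char}(K)>\mathtt w(\Delta)$ to prevent degenerate cancellations. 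Extending the height-$2$ calculation of Lemma~\ref{lem:height 2} to arbitrary $h$-transverse $\Delta_k$ is the main technical obstacle.

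For the pairwise intersections: triviality of $\underline d$ on non-horizontal sides of $\Delta$ forces distinct $C_k$ and $C_{k'}$ to meet each non-horizontal toric divisor at disjoint points, since otherwise the tangency of $C=\sum_k C_k$ at a common point there would exceed $1$. For intersections elsewhere -- in the maximal torus or along horizontal toric divisors -- I would invoke the torus $T$-action on each factor $V^{\irr}_{0,\underline d_k,\mathscr L_k}$ to obtain a generic transversality statement: for a generic point in the irreducible product $\prod_k V^{\irr}_{0,\underline d_k,\mathscr L_k}$, any two distinct components $C_k$ and $C_{k'}$ meet transversally and only pairwise, at points disjoint from the finite self-singular loci of each $C_j$, and no three $C_j$ share a common point. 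Combined with nodality of each $C_k$ from the previous step, this yields that every singular point of $C$ is an ordinary node lying in the maximal torus, as claimed.
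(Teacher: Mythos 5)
Your split into self-singularities of the $C_k$ and pairwise intersections is natural, and the use of Lemma~\ref{lem:irreducibility in rational case} to place all self-singularities of each $C_k$ in the maximal torus and rule out unibranch singularities is exactly the paper's first step; the observation $\mathtt w(\Delta_k)\le\mathtt w(\Delta)$ is also correct and implicitly used in the paper. However, the proposal has two genuine gaps, and the paper resolves both with a technique you have not considered.

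First, and most seriously, you propose to prove that each $C_k$ is individually nodal by extending the explicit log-derivative calculation of Lemma~\ref{lem:height 2} to arbitrary-height $h$-transverse polygons, and you yourself flag this as ``the main technical obstacle.'' That really is a missing piece, and it is not what the paper does. The paper never shows that each $C_k$ is nodal on its own. Instead, it works with the whole reducible curve $C$ at once: assuming $C$ has a triple point or a tacnode at some $p$, it blows up $p$ (once for a triple point, twice for a tacnode to separate the tangent direction) and compares a lower bound for $\dim Z$, the locus of such degenerate configurations in $\prod_k V^\irr_{0,\underline d_k,\mathscr L_k}$, with an upper bound for $\dim T_{[C]}Z$ obtained from $h^0$ of the normal bundle of the immersed map $C^\nu\to S'$ twisted down by the tangency divisor. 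These two estimates contradict each other unless $|\underline d_1|$ is very small, and Lemma~\ref{lem:height 2} is invoked only once, in the tacnode-on-a-single-component case, to rule out the borderline possibility $|\underline d_1|\le 4$ coming from height-$2$ polygons without horizontal sides. This normal-sheaf/blow-up argument works uniformly in the height and is precisely what lets the paper avoid the explicit height-by-height calculation your plan requires.

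Second, the transversality of pairwise intersections is underargued. The torus action does let you move a general $C_{k'}$ off the finite set of singular points and boundary points of the other components, but ruling out a tacnode between two \emph{general} components is a codimension-one condition on the $2$-dimensional torus and needs a properness argument: a priori, two components could be tangent to one another in \emph{every} torus translate, and this does happen in small characteristic (Example~\ref{ex:severi with tangency}). The paper disposes of this case, too, by the same blow-up/normal-bundle dimension count (Case 2.1 of the Claim's proof), not by the torus action alone. Finally, your remark that triviality of $\underline d$ on non-horizontal sides forces $C_k$ and $C_{k'}$ to meet the non-horizontal divisors at disjoint points quietly presupposes that the reducible curve $C$ already has tangency profile $\underline d$, which is part of what one is trying to establish for a general tuple; the paper sidesteps this by directly invoking the torus action to move the boundary points apart.
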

\begin{proof} According to Lemma~\ref{lem:irreducibility in rational case}, the singular locus of each $C_k$ is contained in the maximal torus of $S_\Delta$. On the other hand, as each $V^\irr_{0,\underline d_k,\mathscr L_k}$ admits a torus action, we can move $C_k$ so that it misses the points of intersection of the other components of $C$ with the boundary of $S_\Delta$.
As a result, the intersections of components of $C$ are also contained in the maximal torus, hence the singular locus of $C$ is contained in the maximal torus. It remains to show that $C$ is nodal.

Let $\underline {d}=\left(\left\{d_{i,j}\right\}_{1\leq j\leq a_i}\right)_{1\leq i\leq s}$ and $C\cap D_i= \{p_{i,1},...,p_{i,a_i}\}$ for all $1\leq i\leq s$.
Assume to the contrary that $C$ has a singular point $p$ that is not a node. By Lemma~\ref{lem:irreducibility in rational case}, each $C_i$ contains no unibrach singularities, and thus there are the following two cases to consider.

Case 1: \textit{there are at least three branches of $C$ that pass through $p$.} Again, since the variety $V^\irr_{0,\underline d_k,\mathscr L_k}$ admits a torus action, and $C_k$ is general, these branches must come from the same component of $C$, say $C_1$. Then $\Delta_1$ is non-degenerate and has height at least 2. In particular, we have $|\underline d_1|\geq 4$ as $\underline d_1$ is trivial on toric divisors corresponding to non-horizontal sides of $\Delta$.
Let $U\subset |\mathscr L_\Delta|$ be the locus of curves $C'=\cup_{1\leq k\leq m}C'_k$ such that $[C'_k]\in V^\irr_{0,\underline d_k,\mathscr L_k}$ for all $k$ and $p\in C'_1$ is a singular point with at least three branches.
Then
$$\dim U\geq \dim V^\irr_{0,\underline d_1,\mathscr L_1}-2+\sum_{k=2}^m\dim V^\irr_{0,\underline d_k,\mathscr L_k}=\sum_{k=1}^m\left(|\underline d_k|-1\right)-2.$$

Let $\pi_p\colon S\rightarrow S_\Delta$ be the surface obtained from $S_\Delta$ by blowing up $p$. Let $E$ be the exceptional divisor, and $\widetilde C$ (respectively, $\widetilde C_k$) be the strict transform of $C$ (respectively, $C_k$). It follows that
$$K_S\sim \pi_p^*K_{S_\Delta}+E\mathrm{\ and\ } \widetilde C_1\sim \pi^*C_1-aE,$$
where $a\geq 3$, and $\widetilde C_k\sim \pi^*C_k$ for $k\geq 2$. Taking strict transforms in $S$ gives a rational map $U\dashrightarrow V_{1-m,\mathscr O_S(\widetilde C)}$, since $1-m=p_g(C)$. It is an inclusion on the domain and we denote by $U'$ its image. Then $[\widetilde C]\in U'$ is general, and the normalization $f\colon C^\nu\rightarrow S$ of $\widetilde C$ is immersed.
Hence the normal sheaf $\mathscr N_f$ is a line bundle on $C^\nu$, and its degree on each component $C^\nu_k$ of $C^\nu$ is $-K_S\cdot \widetilde C_k+\deg K_{C^\nu_k}$. In other words, $\mathscr N_f$ has degree $-K_{S_\Delta}\cdot C_1-2-a$ on $C^\nu_1$, and degree $-K_{S_\Delta}\cdot C_k-2$ on $C^\nu_k$ for $k\geq 2$.

Now since char$(K)> \mathtt w(\Delta)$, each $d_{i,j}$ is not divisible by char$(K)$, and the space of first order deformations of $f$ (with fixed target $S_\Delta$) that preserves the tangency with the boundary of $S_\Delta$ is identified with $H^0\left(C^\nu,\mathscr N_f\left(-\sum_{i=1}^s\sum_{j=1}^{a_i}(d_{i,j}-1)p_{i,j}\right)\right)$. Since $|\underline d_1|\geq 4$, we have
\begin{eqnarray*}
\dim T_{[C]}U&=&\dim T_{[\widetilde C]}U'\leq h^0\left(C^\nu,\mathscr N_f\left(-\sum_{i=1}^s\sum_{j=1}^{a_i}(d_{i,j}-1)p_{i,j}\right)\right)= \\ &=& \max \left\{|\underline d_1|-1-a,0 \right\}+\sum_{k=2}^m(|\underline {d_k}|-1)<\dim U.
\end{eqnarray*}
This provides a contradiction.

Case 2: \textit{there are two branches of $C$ passing through $p$ with non-trivial tangency at $p$.} Denote by $\sigma$ the tangent direction of these branches at $p$.
Let $\pi_p\colon S\rightarrow S_\Delta$ be the surface obtained by blowing up $p$ as in Case 1, and $\pi_{\sigma}\colon S'\rightarrow S$ be obtained by blowing up the point in the exceptional divisor $E$ of $S$ corresponding to the direction $\sigma$. Let $\widetilde C$ and $\widetilde C_k$ be the strict transforms of $C$ and $C_k$ in $S'$ respectively, and $f'\colon C^\nu\rightarrow S'$ the map induced by normalizing $\widetilde C$. Let $F_1$ be the pullback of $E$ to $S'$, and $F_2$ the exceptional divisor of $\pi_\sigma$. Then $K_{S'}=\pi_\sigma^*\pi_p^*K_{S_\Delta}+F_1+F_2$

Case 2.1: \textit{the two branches at $p$ come from two different components, say $C_1$ and $C_2$.}  Then at least one of $\Delta_1$ and $\Delta_2$ is non-degenerate.
By the argument above we can assume that $p$ is contained in the maximal torus.
By assumption, we have a rational map from $\prod_{1\leq k\leq m}V^\irr_{0,\underline d_k,\mathscr L_k}$ to the incidence variety $P$ of a point contained in a line in the maximal torus of $S_\Delta$; namely, it is given by mapping $C$ to the point $p$ and the tangent line of $C_1$ at $p$ (in direction $\sigma$). Let $W$ be the fiber of this map for our fixed $p$ and $\sigma$. Since $P$ has dimension $3$, we have 
\[
\dim W\geq \sum_{k=1}^m\dim V^\irr_{0,\underline d_k,\mathscr L_k}-3=\sum_{k=1}^m\left(|\underline d_k|-1\right)-3.
\]
Moreover, if $\Delta_1$ or $\Delta_2$ is degenerate, then $\dim W\geq \sum_{k=1}^m\left(|\underline d_k|-1\right)-2$ since the choice of the tangent direction $\sigma$ is unique, and the image of the map to $P$ has dimension at most $2$.

Similar as in Case 1, passing to strict transforms in $S'$ gives a rational inclusion  $W\dashrightarrow V_{g,\mathscr O_{S'}(\widetilde C)}$. Let $W'$ be the image of this map. By  construction, we have
$$\widetilde C_k\sim \pi_\sigma^*\pi_p^*C_k-F_1-F_2 \mathrm{\ for\ }k=1,2,
\mathrm{\ and\ } \widetilde C_k\sim \pi^*C_k\mathrm{\ for\ }k>2.$$
Hence $\mathscr N_{f'}$ has degree $-K_{S_\Delta}\cdot C_k-4$ on $C^\nu_k$ for $k=1,2$, and degree $-K_{S_\Delta}\cdot C_k-2$ on $C^\nu_k$ for $k\geq 3$. Thus
\begin{eqnarray*}
\dim T_{[C]}W &=& \dim T_{[\widetilde C]}W'\leq h^0\left(C^\nu,\mathscr N_{f'}\left(-\sum_{i=1}^s\sum_{j=1}^{a_i}(d_{i,j}-1)p_{i,j}\right)\right) = \\ &=& \sum_{k=1}^2\max\left\{|\underline d_k|-3,0\right\}+\sum_{k=3}^m\left(|\underline {d_k}|-1\right)<\dim W.
\end{eqnarray*}
This again provides a contradiction.

Case 2.2: \textit{the two branches at $p$ come from the same component, say $C_1$.}  Then $C_1$ is singular and $\Delta_1$ has height at least 2 as in Case 1. By Lemma~\ref{lem:height 2}, we have $|\underline d_1|\geq 5$. Similar to $W$ in Case 2.1, let $Z\subset |\mathscr L_\Delta|$ be the locus of curves  $C'=\cup_{1\leq k\leq m}C'_k$ such that $\left[C'_k\right]\in V^\irr_{0,\underline d_k,\mathscr L_k}$, $p\in C'_1$ is a tacnode, and the tangent direction of $C'_1$ at $p$ is $\sigma$. Arguing as in Case 2.1, we get
\[
\dim Z\geq \sum_{k=1}^m\left(|\underline d_k|-1\right)-3.
\]
Let $Z'$ be the image of the rational inclusion  $Z\dashrightarrow V_{g,\mathscr O_{S'}(\widetilde C)}$.
We have
$$\widetilde C_1\sim \pi^*C_1-2F_1-2F_2\mathrm{\ and\ } C_k\sim \pi^*C_k\mathrm{\ for\ }k>2.$$
Therefore, $\mathscr N_{f'}$ has degree $-K_{S_\Delta}\cdot C_1-6$ on $C^\nu_1$, and degree $-K_{S_\Delta}\cdot C_k-2$ on $C^\nu_k$ for $k\geq 2$. It follows that
\begin{eqnarray*}
\dim T_{[C]}Z &=& \dim T_{[\widetilde C]}Z'\leq h^0\left(C^\nu,\mathscr N_{f'}\left(-\sum_{i=1}^s\sum_{j=1}^{a_i}(d_{i,j}-1)p_{i,j}\right)\right)= \\ &=& \max\left\{|\underline d_1|-5,0\right\}+\sum_{k=2}^m\left(|\underline {d_k}|-1\right)<\dim Z.
\end{eqnarray*}
This is again a contradiction.
\end{proof}

We conclude this section with an example showing that some bound on $\mathrm{char}(K)$ is necessary in order to obtain generically nodal curves in Theorem \ref{thm:Zariski}.
\begin{ex}\label{ex:severi with tangency}
    Suppose $\Delta$ is the parallelogram with vertices $(0,0)$, $(0,-1)$, $(q,0)$ and $(q,1)$, where $q$ is a positive power of the characteristic of $K$. Then according to \cite[Theorem 4.2]{Tyo14}, for any $[C]\in V^\irr_{0,\Delta}$, all singularities of $C$ are unibranch. Moreover, for a general curve $[C']\in V^\irr_{0,\Delta}$, $C'\cap C$ consists of two points, both of multiplicity $q$. In particular, the torus action on $C'$ cannot be used to change the tangency relation between $C$ and $C'$.
\end{ex}

\section{Irreducibility of Severi Varieties}\label{sec:irr}
In this section, we use a result of Lang \cite{lang2020monodromy} on the monodromy action on the nodes of a general rational curve in $V^\irr_{0,\Delta}$ to prove the irreducibility of $V^\irr_{g,\Delta}$ in certain cases. Let $\{m_i\}_{1\leq i\leq s}$ be the set of primitive integral vectors along the sides of $\Delta$. Set
$$l:=5\max_{1\leq i\leq s}||m_i||^2,$$ where $||m_i||$ denotes the Euclidean length of $m_i$.
As in \cite{lang2020monodromy}, we denote by $\mathscr C_{\geq l}(S_\Delta)$ the set of ample line bundles $\mathscr L$ on $S_\Delta$ such that $\deg(\mathscr L|_C)\geq l$ for any curve $C\subset S_\Delta$. Let $N_\Delta\subseteq N$ be the sublattice generated by the primitive normals of the sides of $\Delta$.

\begin{thm}\label{thm:monodromy}
Suppose char$(K)=0$. Let $\Delta$ be an $h$-transverse polygon. If $\mathscr L_\Delta\in\mathscr C_{\geq l}(S_\Delta)$ and $N_\Delta=N$, then $V^\irr_{g,\Delta}$ is irreducible for all $0\leq g\leq \#|\Delta^\circ\cap M|$.
\end{thm}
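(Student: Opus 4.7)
The plan is to implement Harris' classical strategy. The degeneration input is provided by Theorem~\ref{thm:main thm}: every irreducible component of $\overline{V^\irr_{g,\Delta}}$ contains $V^\irr_{0,\Delta}$. The base case $g=0$ is Lemma~\ref{lem:irreducibility in rational case}(1), so I may assume $g\ge 1$ and set $\delta:=\#|\Delta^\circ\cap M|$, which is the arithmetic genus of a smooth curve in $|\mathscr L_\Delta|$.

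First I would invoke Theorem~\ref{thm:Zariski}, applicable because $\mathrm{char}(K)=0$: a general $[C_0]\in V^\irr_{0,\Delta}$ is nodal with exactly $\delta$ nodes $p_1,\dots,p_\delta$, all lying in the dense torus orbit of $S_\Delta$. By the standard deformation theory of nodal curves on a surface, a neighborhood of $[C_0]$ in $|\mathscr L_\Delta|$ is a smooth germ, in which for each subset $S\subset\{p_1,\dots,p_\delta\}$ of cardinality $\delta-g$ the locus of deformations preserving exactly the nodes in $S$ is a smooth stratum of codimension $\delta-g$; since the normalization of $C_0$ is connected, smoothing any proper subset of nodes yields an irreducible curve, so the general point of this stratum lies in $V^\irr_{g,\Delta}$. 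Hence the branches of $\overline{V^\irr_{g,\Delta}}$ at $[C_0]$ are in natural bijection with the $(\delta-g)$-subsets of $\{p_1,\dots,p_\delta\}$; conversely, by Theorem~\ref{thm:Zariski} applied to $V^\irr_{g,\Delta}$, every branch of $\overline{V^\irr_{g,\Delta}}$ at $[C_0]$ has such a description, since a general element in any component of $V^\irr_{g,\Delta}$ is a nodal curve with $\delta-g$ nodes specializing to nodes of $C_0$.

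Combining the branch description with Theorem~\ref{thm:main thm}, the irreducible components of $V^\irr_{g,\Delta}$ correspond bijectively to the orbits of the monodromy action of $\pi_1(V^\irr_{0,\Delta},[C_0])$ on the set of $(\delta-g)$-subsets of $\{p_1,\dots,p_\delta\}$. By the main result of \cite{lang2020monodromy}, the hypotheses $\mathscr L_\Delta\in\mathscr C_{\geq l}(S_\Delta)$ and $N_\Delta=N$ ensure that this monodromy acts as the full symmetric group on $\{p_1,\dots,p_\delta\}$, hence transitively on $(\delta-g)$-subsets, which yields the irreducibility of $V^\irr_{g,\Delta}$. The only non-routine step is the local deformation picture at $[C_0]$, which requires Theorem~\ref{thm:Zariski} on both the $V^\irr_{0,\Delta}$- and $V^\irr_{g,\Delta}$-sides to identify the branches with subsets of nodes; the rest is a direct translation between branches, monodromy orbits, and irreducible components, relying on the two deep inputs Theorem~\ref{thm:main thm} and the monodromy computation of \cite{lang2020monodromy}.
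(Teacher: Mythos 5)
Your proof follows the same strategy as the paper's: use Theorem~\ref{thm:Zariski} to obtain a general nodal rational curve $[C_0]$ with $\delta$ nodes in the torus, identify branches of $\overline{V^\irr_{g,\Delta}}$ at $[C_0]$ with subsets of nodes, apply \cite{lang2020monodromy} (with $\mathscr L_\Delta\in\mathscr C_{\geq l}$ and $N_\Delta=N$) to get that the monodromy is the full symmetric group, and conclude via the degeneration result of Theorem~\ref{thm:main thm}. The one small step you elide is that the monodromy argument and the cited results of Lang are stated over $\CCC$, so one should (as the paper does) first invoke the Lefschetz principle to reduce from the given algebraically closed field $K$ of characteristic $0$ to $K=\CCC$; otherwise the proof and its level of detail on the local branch description are correct and in essence identical to the paper's.
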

\begin{proof}
By Lefschetz's principle, we may assume $K=\mathbb C$. According to  Theorem~\ref{thm:Zariski}, a general curve $[C]\in V^\irr_{0,\Delta}$ is nodal and has $\#|\Delta^\circ\cap M|$ nodes. As in the planar case, $\overline V^\irr_{g,\Delta}$ thus has $\binom{\#|\Delta^\circ\cap M|}{g}$ smooth branches through $[C]$, corresponding to smoothing out subsets of $g$ nodes of $C$. Since $\mathscr L_\Delta\in\mathscr C_{\geq l}(S_\Delta)$, according to \cite[Theorem 1 and Proposition 7.4]{lang2020monodromy}, the monodromy group of $V^\irr_{0,\Delta}$ acts on $C^\sing$ as the group of deck transformations of a map from $C^\sing$ to a quotient of the group $\mathrm{Hom}(M_\Delta/M,\mathbb C)$, where $M_\Delta$ is the dual lattice of $N_\Delta$. As $N_\Delta=N$, the group  $\mathrm{Hom}(M_\Delta/M,\mathbb C)$ is trivial and hence the monodromy group acts as the full symmetric group on $C^\sing$. In particular, the action on the set of subsets of g nodes is transitive. Consequently, there is a unique irreducible component of $\overline V^\irr_{g,\Delta}$ containing $[C]$. On the other hand, by Theorem~\ref{thm:main thm} any irreducible component of $\overline V^\irr_{g,\Delta}$ contains $[C]$. Thus $V^\irr_{g,\Delta}$ is irreducible.
\end{proof}

\begin{ex}\label{ex:kites irreduciblity}
    Suppose $\Delta$ is a kite as in Example~\ref{ex:h-transverse polygon} and $\mathscr L$ a sufficiently ample line bundle on $S_\Delta$.
    If $N_\Delta=N$, e.g., $(k,k')=(1,2)$ as in Figure~\ref{fig:htransverse1}, then the Severi varieties $V^\irr_{g,\mathscr L}$ are either empty or irreducible for all $g\ge 0$ by Theorem~\ref{thm:monodromy}. On the other hand, if $N_\Delta\neq N$, e.g., $(k,k')=(3,3)$ as in Figure~\ref{fig:diamond}, and $\mathscr L=\mathscr L_\Delta^{\otimes l}=\mathscr L_{l\Delta}$, then $V^\irr_{g,\mathscr L}$ is known to be reducible for $g=1$ and all $l\ge 1$ by \cite[Theorem A]{LT20} applied to the polygon $l\Delta$.
\end{ex}

In Theorem~\ref{thm:monodromy} we do not allow for non-trivial tangency profiles. Our final example shows that imposing tangency conditions requires a separate analysis of the monodromy action:

\begin{ex}
    Let $\Delta$ be the $h$-transverse polygon with vertices $(0,0), (3,1), (3, -1), (6, 1), (6, -1)$ and $(9,0)$. Suppose $\mathrm{char}(K) = 0$ and let $\underline d$ be the tangency profile given by imposing a point of order $3$ along each of the $2$ toric divisors that correspond to horizontal sides of $\Delta$. Using a similar argument as in the proof of \cite[Proposition 3.1]{LT20}, one can check that $V_{1, \Delta}^\irr$ is irreducible, whereas $V_{1, \underline d, \Delta}^\irr$ contains $2$ irreducible components.
\end{ex}

\providecommand{\bysame}{\leavevmode\hbox to3em{\hrulefill}\thinspace}
\providecommand{\MR}{\relax\ifhmode\unskip\space\fi MR }
\providecommand{\MRhref}[2]{%
  \href{http://www.ams.org/mathscinet-getitem?mr=#1}{#2}
}
\providecommand{\href}[2]{#2}

	\end{document}